\newtheorem{lemma}{Lemma}
\theoremstyle{nonumberplain}
\newtheorem{proof}{Proof} 
\newcommand{\qed}{\hfill \ensuremath{\Box}}
\DeclareMathOperator{\spn}{span}
\DeclareMathOperator{\diag}{diag}
\newcommand*{\horzbar}{\rule[.5ex]{2.5ex}{0.5pt}}
\newcommand{\edits}[1]{{\color{black}{#1}}}
\title{Graph-theoretic algorithms for Kolmogorov operators: Approximating solutions and their gradients in elliptic and parabolic problems on manifolds}
\author{Andrew D. Davis\thanks{Corresponding author: \texttt{davisad@alum.mit.edu}}\,\, and Dimitrios Giannakis \\ \\
 Department of Mathematics and Center for Atmosphere Ocean Science, \\ Courant Institute of Mathematical Sciences, New York University
}
\begin{document}
\maketitle
\thispagestyle{specialfooter}

\begin{abstract}
    We employ kernel-based approaches that use samples from a probability distribution to approximate a Kolmogorov operator on a manifold. The self-tuning variable-bandwidth kernel method [Berry \& Harlim, \emph{Appl.\ Comput.\ Harmon.\ Anal.}, 40(1):68--96, 2016] computes a large, sparse matrix that approximates the differential operator. Here, we use the eigendecomposition of the discretization to (i) invert the operator, solving a differential equation, and (ii) represent gradient vector fields on the manifold. These methods only require samples from the underlying distribution and, therefore, can be applied in high dimensions or on geometrically complex manifolds when spatial discretizations are not available. We also employ an efficient $k$-$d$ tree algorithm to compute the sparse kernel matrix, which is a computational bottleneck.
\end{abstract}

\section{Introduction}

Graph Laplacian and kernel methods that estimate properties of large data sets are widely used tools in machine learning, including supervised and unsupervised problems such as classification \cite{bertozzietal2018}, clustering \cite{ngetal2002,shimalik2000}, dimension reduction \cite{belkinniyogi2003}, and forecasting \cite{berryetal2015}. In manifold learning applications, these tasks are carried out by approximating the action of a differential operator given samples from a distribution $\psi$ concentrated on or near a manifold in the ambient data space. Importantly, the methods only require samples from the underlying distribution $\psi$, without needing to evaluate the probability density function. In this work, we are interested in finding solutions to differential equations of the form $\mathcal{L}_{c, \psi} f = g$, where 
\begin{displaymath}
    \mathcal{L}_{c, \psi} f = \Delta f + c \nabla f \cdot \frac{\nabla \psi}{\psi}
\end{displaymath}
is an elliptic Kolmogorov operator (defined rigorously in Section~\ref{sec:Kolmogorov-problem}). Here, $c \in \mathbb{R}$ is a parameter, $\Delta$ is the Laplace-Beltrami operator, and $\nabla$ is the gradient operator. Another objective we address is representing the gradient vector field $\nabla f$ of the solution.  

    
Evaluating exponentially decaying kernels of the form $K(\epsilon, \boldsymbol{x}, \boldsymbol{y}) = K(\|\boldsymbol{x}-\boldsymbol{y}\|^2/\epsilon^2)$ on pairs of points $\boldsymbol{x}, \boldsymbol{y} \in \Omega \subseteq \mathbb R^m$ allows us to approximate geometric operators for functions defined over a manifold $\Omega$ \cite{belkinniyogi2003,coifmanlafon2006,singer2006}. Define the integral operator 
\begin{equation}
    \mathcal{I}_{\epsilon} f(\boldsymbol{x}) = \int_{\Omega} \widetilde K(\epsilon, \boldsymbol{x}, \boldsymbol{x^{\prime}}) f(\boldsymbol{x^{\prime}}) \psi(\boldsymbol{x^{\prime}}) \, d \boldsymbol{x^{\prime}} \approx \sum_{i=1}^{n} \widetilde K(\epsilon, \boldsymbol{x}, \boldsymbol{x}^{(i)}) f(\boldsymbol{x}^{(i)}),
    \label{eq:integral-kernel-operator}
\end{equation}
where $\widetilde K $ is a kernel constructed by a normalization (e.g., Markov normalization) of $K$, $\psi$ is a probability density function defined on $\Omega$, and $\boldsymbol{x}^{(i)} \sim \psi$. Intuitively, the exponential decay of the kernel localizes this integral in an $\epsilon$-ball around $\boldsymbol{x}$. For small $\epsilon$, we have the asymptotic expansion in the form of a Taylor series
\begin{equation}
    \mathcal{I}_\epsilon f(\boldsymbol{x}) = m_\psi( \boldsymbol x )f(\boldsymbol{x})  + \epsilon^2 \mathcal{L}_{\psi} f(\boldsymbol{x}) + O(\epsilon^4),
    \label{eq:integral-Taylor-expansion}
\end{equation}
where $m_\psi$ is a function and  $\mathcal{L}_{\psi}$ is a second-order differential operator that may depend on the density $\psi$ and the normalization of the kernel. 

Kernel Density Estimation (KDE) is a common application of the Taylor expansion in~\eqref{eq:integral-Taylor-expansion}. A suitable kernel normalization leads to $m_\psi = \psi $, and we recover the density $\psi(\boldsymbol{x})$---to leading (zeroth) order in $\epsilon$---by applying the operator $\lim_{\epsilon \rightarrow 0} \mathcal{I}_{\epsilon} f$ to the constant function $f(\boldsymbol{x}) = 1$ \cite{kimpark2013,ozakingray2009,pelletier2005}. In the limit of infinite samples $n \rightarrow \infty$, the estimate in~\eqref{eq:integral-kernel-operator} converges to the true density $\psi(\boldsymbol{x})$. Given a bandwidth function $\rho(\boldsymbol{x})$, define the variable-bandwidth kernel
\begin{equation}
    K_{\rho}(\epsilon, \boldsymbol{x}, \boldsymbol{y}) = K\left( \frac{\|\boldsymbol{x}-\boldsymbol{y}\|^2}{\epsilon^2 \rho(\boldsymbol{x}) \rho(\boldsymbol{y})} \right).
    \label{eq:variable-bandwidth-kernel}
\end{equation}
Density estimation methods are significantly more accurate using variable-bandwidth kernels, especially in the tails \cite{rosenblatt1956,parzen1962,sainscott1996,terrellscott1992}.

In addition to density estimation, kernel methods can approximate differential operators acting on smooth functions. For instance, the kernel $K$ can be normalized such that $m_\psi = 1$, and the Taylor expansion in~\eqref{eq:integral-Taylor-expansion} leads to the approximation 
\begin{equation}
    \mathcal{L}_{\psi} f(\boldsymbol{x}) \approx \frac{\mathcal{I}_{\epsilon} f(\boldsymbol{x}) - f(\boldsymbol{x})}{\epsilon^2}.
\end{equation}
Laplacian eigenmaps \cite{belkinniyogi2003} and diffusion maps \cite{coifmanlafon2006} are commonly used tools to estimate the Laplace-Beltrami operator $\Delta$ on a compact Riemannian manifold using variants of this approach. Berry and  Harlim~\cite{berryharlim2016} develop a variable-bandwidth kernel method that uses $n$ samples from an underlying distribution $\psi$ on a potentially non-compact manifold to approximate a Kolmogorov operator $\mathcal{L}_{c,\psi}$. Discretizations of the differential operator $\mathcal{L}_{c, \psi}$ use the $n \times n$ kernel matrix $\boldsymbol{K}$, evaluated at each sample pair such that $K^{(ij)} = K(\epsilon, \boldsymbol{x}^{(i)}, \boldsymbol{x}^{(j)})$, to define a corresponding discrete Kolmogorov operator $\boldsymbol{L}_{c, \psi}$. In this application, \cite{berryharlim2016} shows pointwise convergence: The matrix--vector product $\boldsymbol{g} = \boldsymbol{L}_{c, \psi} \boldsymbol{f}$, such that $$\bm f = ( f^{(1)}, \ldots, f^{(n)})^\top$$ with $f^{(i)} = f(\boldsymbol{x}^{(i)})$, approximates the integral operator $\mathcal{L}_{c, \psi} f(\boldsymbol{x}^{(i)}) \approx g^{(i)} $, where $$( g^{(1)}, \ldots, g^{(n)} )^\top = \bm g.$$ There is an extensive literature on kernel methods for approximating differential operators on manifolds in a variety of functional settings, addressing issues such as pointwise convergence \cite{HeinEtAl07}, spectral convergence \cite{TrillosEtAl20}, and boundary conditions \cite{JiangHarlim20,vaughnetal2019}.

In this paper, we discuss how to practically use approximations of the elliptic Kolmogorov operator $\mathcal L_{c,\psi} $ given samples from the underlying distribution $\psi$. The aims and contributions of this work are to:
\begin{enumerate}
    \item Solve the Kolmogorov problem $\mathcal{L}_{c, \psi} f = g$ such that $\int_{\Omega} f \psi^c \, d \boldsymbol{x} = 0$ using $n$ samples $\{\boldsymbol{x}^{(i)}\}_{i=1}^{n}$ ($\boldsymbol{x}^{(i)} \sim \psi$),
    \item Represent the gradient vector field $\nabla f$ for any function $f$ that we can evaluate (or approximate) on the samples. 
    \item Present an algorithm that computes sparse approximations of the kernel matrix $\bm K$ with $\mathcal{O}(n \log{(n}))$ kernel evaluations.
    \item Provide an open-source implementation as part of the software package MUQ (\texttt{muq.mit.edu}).
\end{enumerate}

We solve the Kolmogorov problem $\mathcal{L}_{c, \psi} f = g$ such that $\int_{\Omega} f \psi^c \, d \boldsymbol{x} = 0$ using $n$ samples $\{\boldsymbol{x}^{(i)}\}_{i=1}^{n}$ ($\boldsymbol{x}^{(i)} \sim \psi$) by representing the functions $f$ and $g$ as a linear combination of eigenfunctions $\phi_j$ of $\mathcal{L}_{c, \psi}$.  For that, we compute eigenvectors $\boldsymbol{L}_{c, \psi} \bm \phi_j = \hat \lambda_j \bm \phi_j $, with $ \bm \phi_j = (\phi_j^{(1)}, \ldots, \phi_j^{(n)})^\top $, and approximate the $j^{th}$ eigenfunction of $\mathcal L_{c,\psi}$ evaluated at the $i^{th}$ sample as $ \phi_j(\bm x^{(i)}) \approx \phi_j^{(i)}$. We then approximate the solution $f$ through a least-squares solution of the linear system $\boldsymbol{L}_{c, \psi} \boldsymbol{f} = \boldsymbol{g}$ in an $\ell$-dimensional space ($ \ell \ll n $) spanned by the leading $\ell$ eigenvectors $\bm \phi_j$. Importantly, we can apply our method in high-dimensional ambient spaces or geometrically complex manifolds where spatial discretizations (e.g., finite-difference stencils) are not available. We represent $\nabla f$ as a linear combination of gradients of the eigenfunctions of $\mathcal{L}_{c,\psi}$, obtained using a Carr\'e du Champ identity (product rule) \cite{bakryetal2013,BerryGiannakis20}.

We employ a computationally efficient way of computing the sparse kernel matrix $\boldsymbol{K}$. In the worst-case scenario, computing the kernel matrix costs $n^2$ kernel function evaluations. However, since the kernel decays exponentially, we approximate $\boldsymbol{K}$ with a sparse matrix whose entries are zero when the kernel is smaller than a specified threshold. For each sample $\boldsymbol{x}^{(i)}$, we use a binary search tree to find its nearest neighbors such that the kernel evaluated at neighbor pairs exceed the threshold. Binary search trees are widely used to find nearest neighbors and within kernel based algorithms \cite{aryaetal1998,jonesetal2011}. We present our algorithm that computes sparse approximations of $\bm K$ with $\mathcal{O}(n \log{(n}))$ kernel evaluations for the sake of completeness and to explicitly write down a method that uses nearest neighbors search algorithms to construct discretizations of differential operators.

This paper is organized as follows. Section~\ref{sec:Kolmogorov-problem} provides an overview of existing kernel-based density estimation and approximations of Kolmogorov operators. In Section~\ref{sec:solution-gradients}, we present the main contributions of this paper, namely, solving the Kolmogorov problem and representing gradient vector fields using the eigenbasis of the Kolmogorov operator. In Section \ref{sec:implementation}, we discuss how to efficiently compute the kernel matrix $\boldsymbol{K}$ (the third contribution of this paper). Finally, in Sections~\ref{sec:examples}~and~\ref{sec:conclusions} we provide practical examples of our algorithms and concluding remarks, respectively.

\section{Background: Density estimation and the Kolmogorov operator} \label{sec:Kolmogorov-problem}

Let $\Omega \subseteq \mathbb{R}^{m}$ be a smooth, $d$-dimensional Riemannian manifold without boundary---note that $m$ is the dimension of the ambient space and $d$ is the intrinsic manifold dimension. Suppose that $\Omega$ is equipped with a probability measure with a smooth density $\psi: \Omega \to \mathbb{R}^{+}$ relative to the volume form of $\Omega$. Given $c \geq 0$, let $\mathcal{H}_{\psi, c}(\Omega)$ be the Hilbert space of real-valued functions on $\Omega$ whose inner product is defined by $\psi$, $\langle f, g \rangle_{\psi,c} = \int_{\Omega} f(\boldsymbol{x}) g(\boldsymbol{x}) \psi^c(\boldsymbol{x}) \, d\boldsymbol{x}$. We restrict choices of $c$ such that $\int_{\Omega} \psi^{c}(\bm x) \, d \boldsymbol{x} < \infty$---clearly, $c=1$ is always valid since $\psi$ is a probability density function. Given a smooth function $f \in \mathcal{H}_{\psi, c}(\Omega)$, we focus on elliptic Kolmogorov operators of the form
\begin{equation}
    \mathcal{L}_{\psi, c} f = \Delta f + c \nabla f \cdot \frac{\nabla \psi}{\psi},
    \label{eq:Kolmogorov}
\end{equation}
where $\Delta$ and $\nabla$ denote the Laplacian and gradient operators, respectively, and the dot operator $\cdot$ denotes the Riemannian inner product between tangent vectors on $\Omega$. The operator $\mathcal{L}_{\psi, c}$ is symmetric on $\mathcal{H}_{\psi, c}$ (i.e., $\langle f, \mathcal{L}_{\psi, c} g \rangle_{\psi,c} = \langle \mathcal{L}_{\psi, c} f, g \rangle_{\psi,c}$) and diagonalizable with an orthonormal basis of smooth eigenfunctions, 
\begin{equation}
    \label{eqEigs}
    \mathcal{L}_{\psi, c} \phi_j = \lambda_j \phi_j, \quad 0=\lambda_0 > \lambda_1 \geq \lambda_2 \geq \cdots \searrow -\infty. 
\end{equation}
When $c=0$, we additionally require that $\Omega$ is compact. 

For any valid $c$, define $\widetilde{\psi} = a \psi^{c}$, where $a>0$ is a normalizing constant. The operator $\mathcal L_{\tilde \psi, 1}$ satisfies 
\begin{equation}
    \mathcal{L}_{\widetilde{\psi}, 1} f = \Delta^2 f + c \nabla f \cdot \frac{\psi^{c-1} \nabla \psi}{\psi^{c}} = \mathcal{L}_{\psi, c} f
    \label{eq:Kolmogorov-operator-similarity}
\end{equation}
and, therefore, we could assume $c=1$ without loss of generality. However, we retain $c$ as a parameter that determines the degree with which the distribution $\psi$ biases the operator. 

Given a function $g \in \mathcal{H}_{\psi, c}$, we seek solutions to
\begin{equation}
    \mathcal{L}_{\psi, c} f = g, \quad \text{such that} \quad  \int_{\Omega} f(\boldsymbol{x}) \psi^c(\boldsymbol{x}) \, d \boldsymbol{x} = 0.
    \label{eq:Kolmogorov-problem}
\end{equation}
The requirement that $\int_{\Omega} f \psi^c \, d \bm{x} = 0$ makes this problem well-posed since constant functions are in the nullspace of $\mathcal{L}_{\psi, c}$. Additionally, we seek to compute the gradient vector field $\nabla f$ associated with the Riemannian metric of $\Omega$. Assume we have independent samples $\{\boldsymbol{x}^{(i)}\}_{i=1}^{n}$ from $\psi$. Notationally, given a function $f: \Omega \mapsto \mathbb{R}$, let the column vector $\boldsymbol{f} = ( f^{(1)}, \ldots, f^{(n)})^\top \in \mathbb{R}^{n}$ represent function values (or approximations) at each sample, i.e., $f(\boldsymbol{x}^{(i)}) = f^{(i)}$. Using these samples, Section \ref{sec:density-estimation} estimates the density $\psi$, and Section \ref{sec:discrete-kolmogorov} defines a matrix $\boldsymbol{L}_{\psi, c} \in \mathbb{R}^{n \times n}$ that approximates the Kolmogorov operator $\mathcal{L}_{\psi, c}$, meaning applying the matrix 
\begin{equation}
    \label{eqDiscreteKolmogorov}
    \boldsymbol{L}_{\psi,c} \boldsymbol{f} = \bm g
\end{equation}
implies $g^{(i)} \approx \mathcal{L}_{\psi, c} f(\boldsymbol{x}^{(i)})$. In Section \ref{sec:solution-gradients}, we use the eigenvectors of $\boldsymbol{L}_{\psi, c}$ as a basis to represent $f$ and its gradient $\nabla f$. 

\subsection{Density estimation} \label{sec:density-estimation}

We estimate the density $\psi$ using the procedure described in \cite{berryharlim2016}---additionally, see Algorithm~1 in \cite{giannakis2019}. Define the bandwidth function 
\begin{equation}
    b^2(\bm x) = \sum_{k=1}^{k_{nn}} \| \boldsymbol{x} - \boldsymbol{x}^{(I(\bm x,k))} \|^2,
    \label{eq:bandwdith-parameter}
\end{equation}
where $k_{nn}$ is an integer parameter that determines a local neighborhood centered at $\boldsymbol{x}$ and $I(\bm x,k)$ is the $k^{th}$ closest sample to $\boldsymbol{x}$. We use the convention that $I(\bm x^{(i)},0)=i$. Let $\epsilon$ be the bandwidth parameter and define the kernel matrix $\boldsymbol{K}_{\epsilon} \in \mathbb{R}^{n \times n} $ with entries 
\begin{equation}
    K_{\epsilon}^{(ij)} = K_b(\epsilon,\bm x^{(i)}, \bm x^{(j)} ) :=  \exp{\left( - \frac{\| \boldsymbol{x}^{(i)} - \boldsymbol{x}^{(j)} \|^2}{\epsilon^2 b(\bm x^{(i)}) b(\bm x^{(j)})} \right)},
    \label{eq:density-kernel-matrix}
\end{equation}
noting that $K_{b}$ is a variable-bandwidth kernel (defined generally in~\eqref{eq:variable-bandwidth-kernel}). Since the entries decay exponentially with square distance $\| \boldsymbol{x}^{(i)} - \boldsymbol{x}^{(j)} \|^2$, we typically truncate by setting $K_{\epsilon}^{(ij)} = 0$ if the entry is sufficiently small---this defines a large, but sparse, matrix. 

Recall from~\eqref{eq:integral-kernel-operator}~and~\eqref{eq:integral-Taylor-expansion} that performing a kernel normalization and applying the associated integral operator $\mathcal I_\epsilon$ to a constant function approximates the probability density function $\psi$. In the discrete case, a suitable normalizing function for density estimation is given by \cite{berryharlim2016}
\begin{displaymath}
    w(\bm x) = n (\pi \epsilon^2 b(\bm x)^2)^{d/2}. 
\end{displaymath}
This leads to the normalized kernel
\begin{displaymath}
\widetilde K_b(\epsilon,\boldsymbol x, \boldsymbol y) = \frac{K_b(\epsilon, \bm x, \bm y )}{w(\bm x)}  ,
\end{displaymath}
and the density estimate
\begin{equation}
    \label{eqDensEst}
    \hat\psi(\bm x) = \sum_{i=1}^n \widetilde K_b(\epsilon,\boldsymbol x, \boldsymbol x^{(i)} ) \approx \mathcal I_\epsilon 1,
\end{equation}
where we have approximated the integral operator in~\eqref{eq:integral-kernel-operator} applied to the constant function $f(\boldsymbol{x}) = 1$. Evaluating the density estimate at the sample points becomes a matrix-vector product by defining a diagonal normalizing matrix $\boldsymbol{W}$ with entries $W^{(ii)} = w(\boldsymbol{x}^{(i)})$, i.e.,
\begin{equation}
    \boldsymbol{\psi} = \boldsymbol{W}^{-1} \boldsymbol{K}_{\epsilon} \boldsymbol{1},
    \label{eq:density-estimation}
\end{equation}
where $\bm \psi = ( \hat \psi(\bm x^{(1)}), \ldots, \hat \psi(\bm x^{(n)})^\top $. Figure~\ref{fig:density-estimation} shows the output of this procedure using samples from a standard Gaussian distribution on $\Omega = \mathbb{R}^2$.

\begin{figure}[h!]
  \centering
  \begin{subfigure}{0.45\textwidth}
    \includegraphics[width=1.0\textwidth]{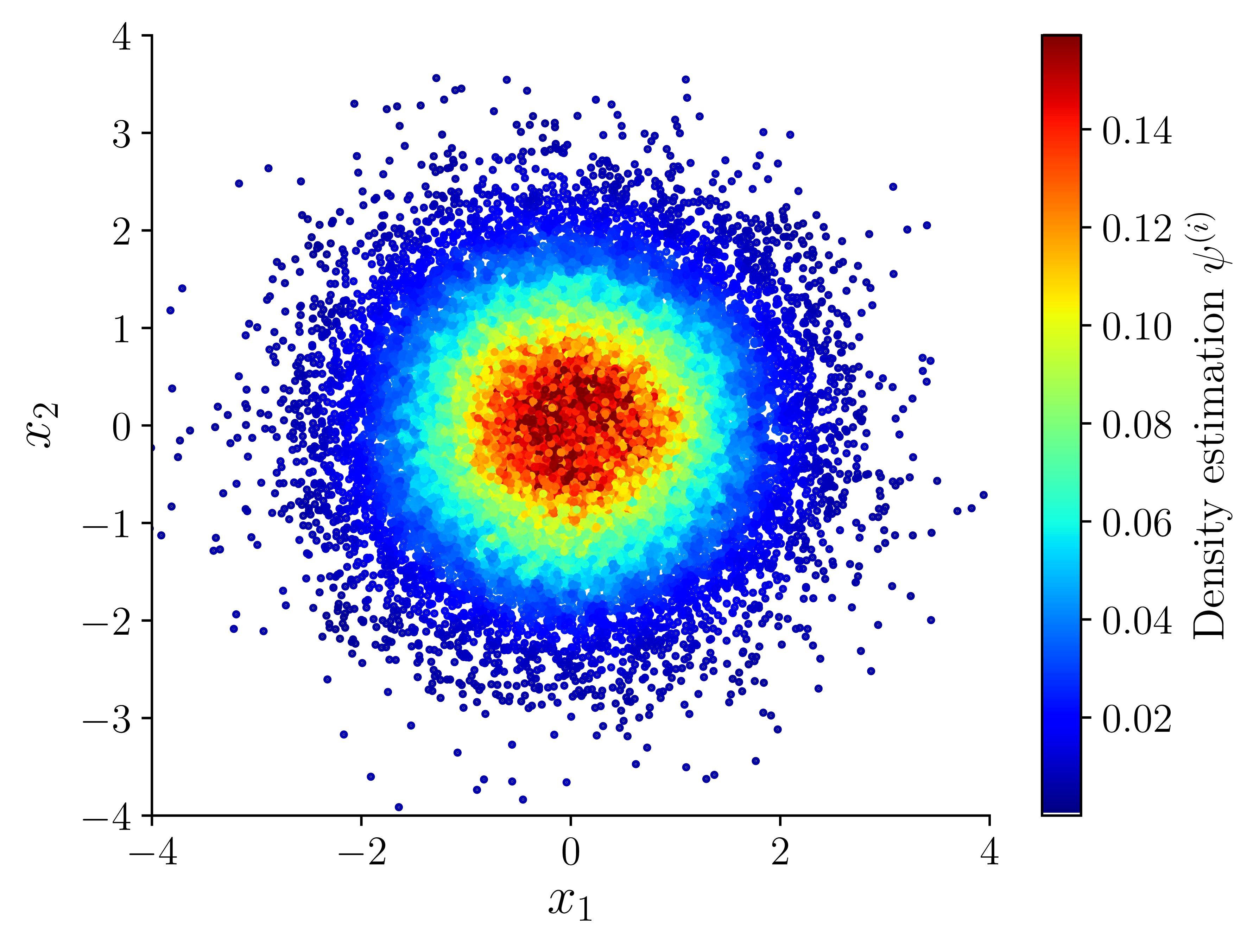}
    \caption{Estimated density $\boldsymbol{\psi}$}
  \end{subfigure}
  \begin{subfigure}{0.45\textwidth}
    \includegraphics[width=1.0\textwidth]{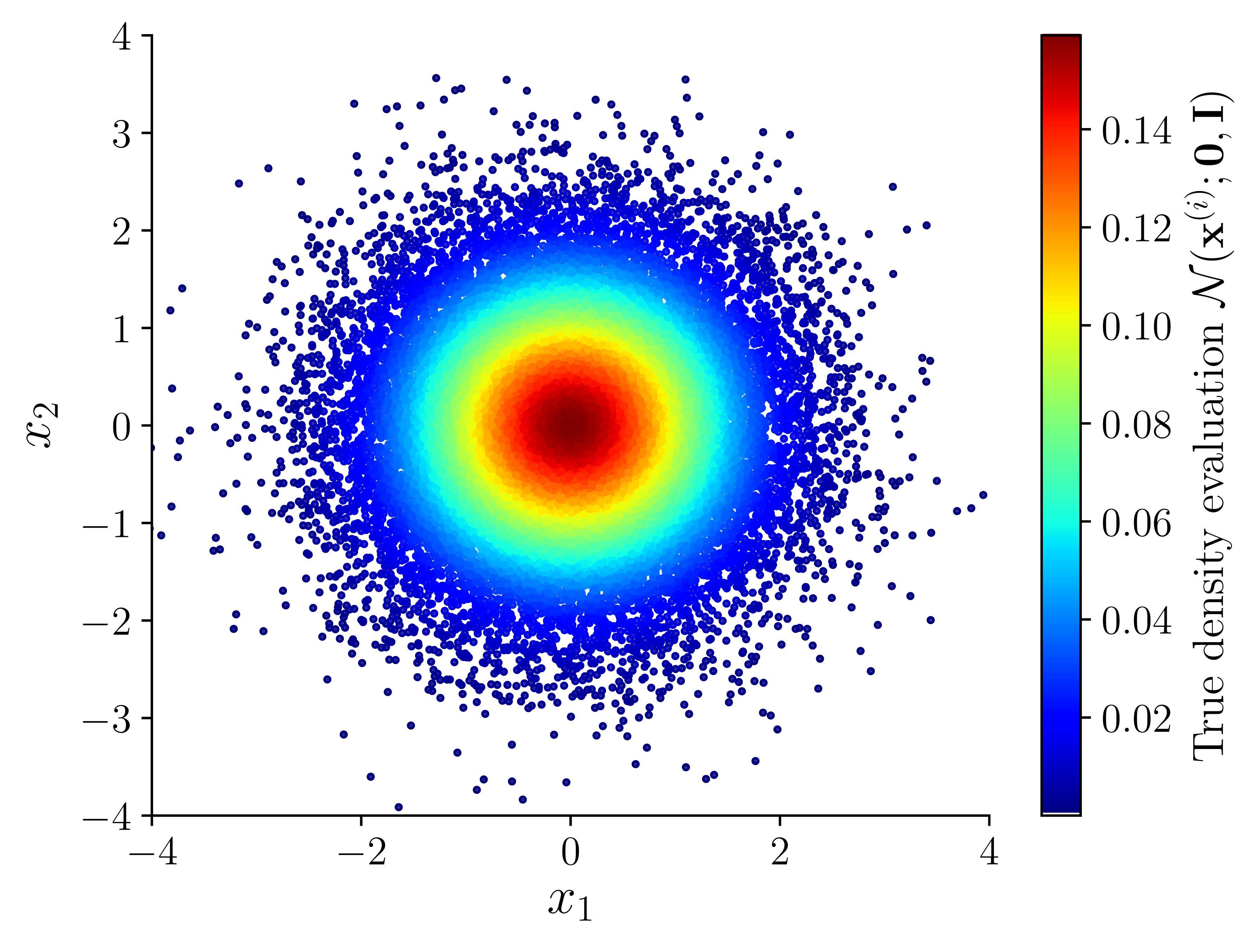}
    \caption{True density $\mathcal{N}(\boldsymbol{x}^{(i)}; \boldsymbol{0}, \boldsymbol{I})$}
  \end{subfigure}
  \caption{Density estimation given $n=2.5 \times 10^4$ samples in $\mathbb{R}^{2}$ from a standard Gaussian distribution $\mathcal{N}(\boldsymbol{0}, \boldsymbol{I})$. We compute the bandwidth parameter $b_i$ using $k_{nn} = 25$ nearest neighbors and set entries of the density kernel matrix $\boldsymbol{K}_{\epsilon}$ to zero if they are below the threshold $10^{-2}$, which makes $\boldsymbol{K}_{\epsilon}$ sparse. We estimate the optional bandwidth parameter $\epsilon$ using the procedure outlined in Section \ref{sec:optimal-bandwidth}.}
  \label{fig:density-estimation}
\end{figure}

\subsection{Discrete Kolmogorov operator} \label{sec:discrete-kolmogorov}

We use the procedure developed in \cite{berryharlim2016} to compute the discrete Kolmogorov operator $\boldsymbol{L}_{\psi, c}$. Given the density estimate $\hat \psi $ and a real parameter $\beta$, define the variable-bandwidth kernel matrix $\boldsymbol{K}_{\epsilon,\beta} \in \mathbb{R}^{n \times n}$ with entries
\begin{equation}
    K_{\epsilon,\beta}^{(ij)} = K_{\hat \psi}(\epsilon, \bm x^{(i)}, \bm x^{(j)}) := \exp{\left( - \frac{\| \boldsymbol{x}^{(i)} - \boldsymbol{x}^{(j)} \|^2}{4 \epsilon^2 (\hat \psi(\bm x^{(i)} ) \hat \psi( \bm x^{(j)}) )^{\beta}} \right)}
    \label{eq:variable-bandwidth-kernel-matrix}
\end{equation}
(again, this is a variable bandwidth kernel of the form~\eqref{eq:variable-bandwidth-kernel}) and normalizing function $q_{\epsilon, \beta} : \Omega \to \mathbb R^+ $ such that
\begin{displaymath}
    q_{\epsilon,\beta}(\bm x ) = \frac{1}{ \hat \psi^{\beta d}( \bm x )} \sum_{j=1}^{n} K_{\hat \psi}(\epsilon, \bm x, \bm x^{(j)}).
\end{displaymath}
We note that the bandwidth parameter $\epsilon$ need not be the same as the bandwidth parameter in the density estimation. Again, we typically set $K_{\epsilon, \beta}^{(ij)} = 0$ if the corresponding entry is below a prescribed threshold so that $\boldsymbol{K}_{\epsilon, \beta}$ is sparse. In the special case when $\beta = 0$,  $\boldsymbol{K}_{\epsilon, \beta}$ simplifies to a fixed-bandwidth kernel matrix. 

Next, define the normalized variable-bandwidth kernel given another real parameter $\alpha$
\begin{displaymath}
    \tilde K_{\hat\psi,\beta,\alpha}( \epsilon, \bm x, \bm y ) = \frac{K_{\hat \psi}(\epsilon,\bm x, \bm y)}{(q_{\epsilon, \beta} (\bm x )q_{\epsilon,\beta}(\bm y))^\alpha}
\end{displaymath}
and an additional normalizing function $q_{\epsilon, \beta, \alpha} : \Omega \to \mathbb R^+$ such that
\begin{displaymath}
    q_{\epsilon, \beta, \alpha}(\bm x) = \sum_{j=1}^{n} \tilde K_{\hat \psi, \beta, \alpha}(\epsilon, \bm x, \bm x^{(j)}).
\end{displaymath}
Let  $\boldsymbol{K}_{\epsilon,\beta,\alpha} \in \mathbb{R}^{n \times n} $ be the normalized kernel matrix with entries
\begin{equation}
    K_{\epsilon,\beta,\alpha}^{(ij)} = \tilde K_{\hat \psi,\beta,\alpha}(\epsilon, \bm x^{(i)}, \bm x^{(j)}),
    \label{eq:normalized-variable-bandwidth-kernel-matrix}
\end{equation}
and let $\boldsymbol{P}$, $\boldsymbol{D}$ be $n \times n$ diagonal matrices such that $P^{(ii)} = \hat \psi^\beta(\bm x^{(i)}) $ and $D^{(ii)} = q_{\epsilon, \beta, \alpha}(\bm x^{(i)})$. Berry and Harlim \cite{berryharlim2016} show that the matrix 
\begin{equation} 
    \boldsymbol{L}_{\psi, c} = \epsilon^{-2} \boldsymbol{P}^{-2} (\boldsymbol{D}^{-1} \boldsymbol{K}_{\epsilon, \beta, \alpha} - \boldsymbol{I})
    \label{eq:discrete-Kolomogorov-matrix}
\end{equation}
approximates the Kolmogorov operator $\mathcal L_{\psi, c}$ in~\eqref{eq:Kolmogorov} with $c = 2 - 2 \alpha + d \beta + 2 \beta$, in the sense that  
\begin{equation}
    \label{eqLApprox}
    (\bm L_{\psi, c} \bm f)^{(i)} \approx \mathcal L_{\psi, c} f(\bm x^{(i)}).
\end{equation}
\edits{A precise error estimate of the approximation in~\eqref{eqLApprox} can be found in Corollary~1 of \cite{berryharlim2016}. In broad terms, the approximation converges in a limit of $\epsilon \to 0 $ after $n \to \infty $ (or for an appropriate decreasing sequence $\epsilon_n$) for fixed $ f \in C^3(\Omega) \cap \mathcal H_{\psi,c}(\Omega)$ and $\bm x^{(i)} \in \Omega$. Moreover, the convergence is uniform with respect to $\bm x^{(i)}$ if $f$ and its derivatives vanish at infinity.}  Intuitively, this result follows from~\eqref{eq:integral-Taylor-expansion} since $\epsilon^2 \mathcal{L}_{\psi} f \approx \mathcal{I}_{\epsilon} f - f$. To approximate $\mathcal{L}_{\psi, c}$ using~\eqref{eq:discrete-Kolomogorov-matrix}, we first use $\boldsymbol{D}$ to normalize the kernel matrix---$\boldsymbol{D}^{-1} \boldsymbol{K}_{\epsilon, \beta, \alpha}$ is a Markov matrix. We approximate the Taylor expansion in~\eqref{eq:integral-Taylor-expansion} by subtracting the identity matrix, but need to weight the result by $\boldsymbol{P}$ to account for the measure $\psi^c$ in the integral operator of~\eqref{eq:integral-kernel-operator}.

\edits{Although the approximation in~\eqref{eqLApprox} converges for manifolds of arbitrary intrinsic dimension $d$, the rate of convergence may suffer from a ``curse of dimensionality.'' That is, without making additional assumptions on $f$, the dataset size $n$ required to attain a given approximation accuracy may grow exponentially in $d$. Moreover, the proportionality constants in the error estimates increase as the geometrical complexity of the embedding of $\Omega$ in $\mathbb R^d$ (quantified, e.g., by the second fundamental form) increases. These issues are common to many manifold learning algorithms and can be alleviated if $f$ possesses additional regularity properties, such as lying in a Sobolev space of sufficiently high order. Even without making such assumptions, the error estimates depend on the intrinsic dimension $d$ of $\Omega$, as opposed to the dimension $m$ of the ambient data space $\Omega$, which is oftentimes far greater than $d$. In Section~\ref{sec:Gaussian-toy-example}, we demonstrate that our methods are effective in computing numerical solutions to the differential equation $\mathcal{L}_{c, \psi} f = g$ in a moderate-dimensional setting.} 


For the purposes of this work, we are interested in approximating the eigenvalues and  eigenfunctions of $\mathcal L_{\psi, c}$ in~\eqref{eqEigs} by the eigenvalues and eigenvectors of $\bm L_{\psi,c}$, i.e.
\begin{equation}
    \label{eqDiscreteEigs}
    \bm L_{\psi,c} \bm \phi_j = \hat \lambda_j \bm \phi_j.
\end{equation}
We take advantage of specialized solvers by working with the symmetric matrix 
\begin{equation}    
    \boldsymbol{\hat{L}}_{\psi, c} = \epsilon^{-2} (\boldsymbol{S}^{-1} \boldsymbol{K}_{\epsilon, \beta, \alpha} \boldsymbol{S}^{-1} - \boldsymbol{P}^{-2}),
    \label{eq:symmetric-discrete-Kolmogorov-operator}
\end{equation}
where $\boldsymbol{S} = \boldsymbol{P} \boldsymbol{D}^{1/2}$ is a diagonal matrix. One can readily verify that $\boldsymbol{\hat{L}}_{\psi, c}$ and $\boldsymbol{L}_{\psi, c}$ are related by the similarity transformation $\boldsymbol{\hat{L}}_{\psi, c} = \boldsymbol{S} \boldsymbol{L}_{\psi, c} \boldsymbol{S}^{-1}$. Therefore, if $\boldsymbol{\hat{\phi}}_j$ is an eigenvector of $\boldsymbol{\hat{L}}_{\psi,c}$ at eigenvalue $\hat \lambda_j$, then $ \bm \phi_j = \boldsymbol{S}^{-1} \boldsymbol{\hat{\phi}}_j$ is an eigenvector of $\boldsymbol{L}_{\psi, c}$ at the same eigenvalue. The symmetry of $\boldsymbol{\hat{L}}_{\psi, c}$ implies that the eigenvectors $\boldsymbol{\hat{\phi}}_j$ can be chosen such that $\boldsymbol{\hat{\phi}}_i^\top \boldsymbol{\hat{\phi}}_j = n \delta_{ij}$. This normalization implies that the $\boldsymbol{\hat{\phi}}_i$ are orthonormal with respect to the empirical sampling measure of the data, which approximates the density $\psi$. The eigenvectors of $\boldsymbol{L}_{\psi,c}$ are then orthonormal with respect to the weighted inner product 
\begin{equation}
    \label{eqInnerProd}
    \langle \bm f, \bm g \rangle_{\bm S} := \bm f^\top \bm S^2 \bm g / n,
\end{equation}
which approximates the inner product of $\mathcal H_{\psi,c}$. That is, our numerical eigenvectors satisfy $ \langle \bm \phi_i,  \bm \phi_j \rangle_{\bm S} = \delta_{ij} $, which is the discrete analog of $\langle \phi_i, \phi_j \rangle_{\psi,c} = \delta_{ij} $. \edits{As representative eigendecomposition results,} Fig.~\ref{fig:laplace-operator-eigenvalues} shows the first 100 eigenvalues $\hat \lambda_i$, ordered in order of increasing magnitude, and the fourth eigenvector $\bm \phi_4$ of $\boldsymbol{L}_{\psi, c}$, computed using samples from a standard Gaussian distribution on $\Omega = \mathbb{R}^{2}$. \edits{We will discuss these results in more detail in Section~\ref{sec:Gaussian-toy-example}.}


\begin{figure}
  \centering
  \begin{subfigure}{0.45\textwidth}
    \includegraphics[width=1.0\textwidth]{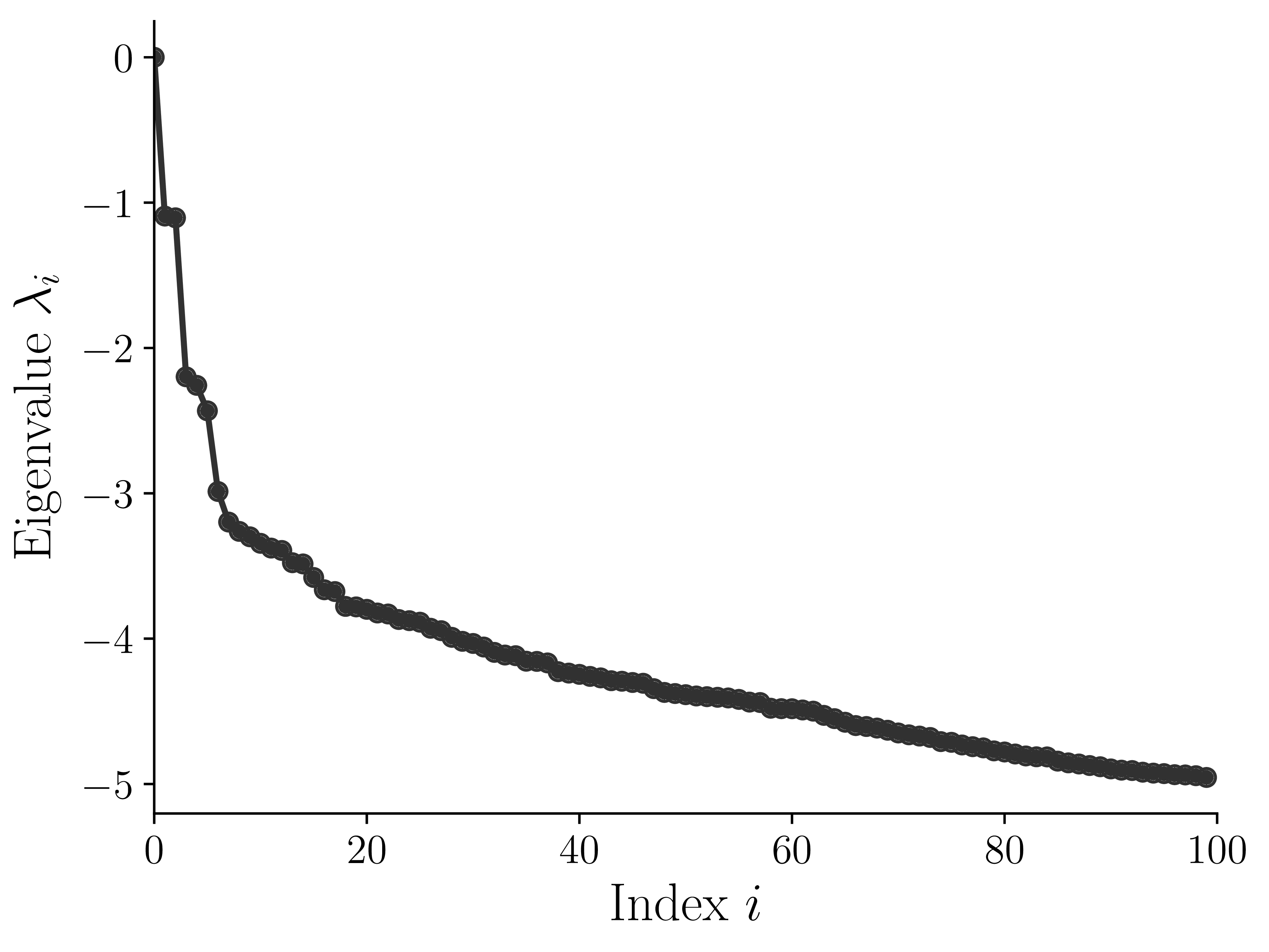}
    \caption{Eigenvalues of $\boldsymbol{L}_{\psi, c}$}
  \end{subfigure}
  \begin{subfigure}{0.45\textwidth}
    \includegraphics[width=1.0\textwidth]{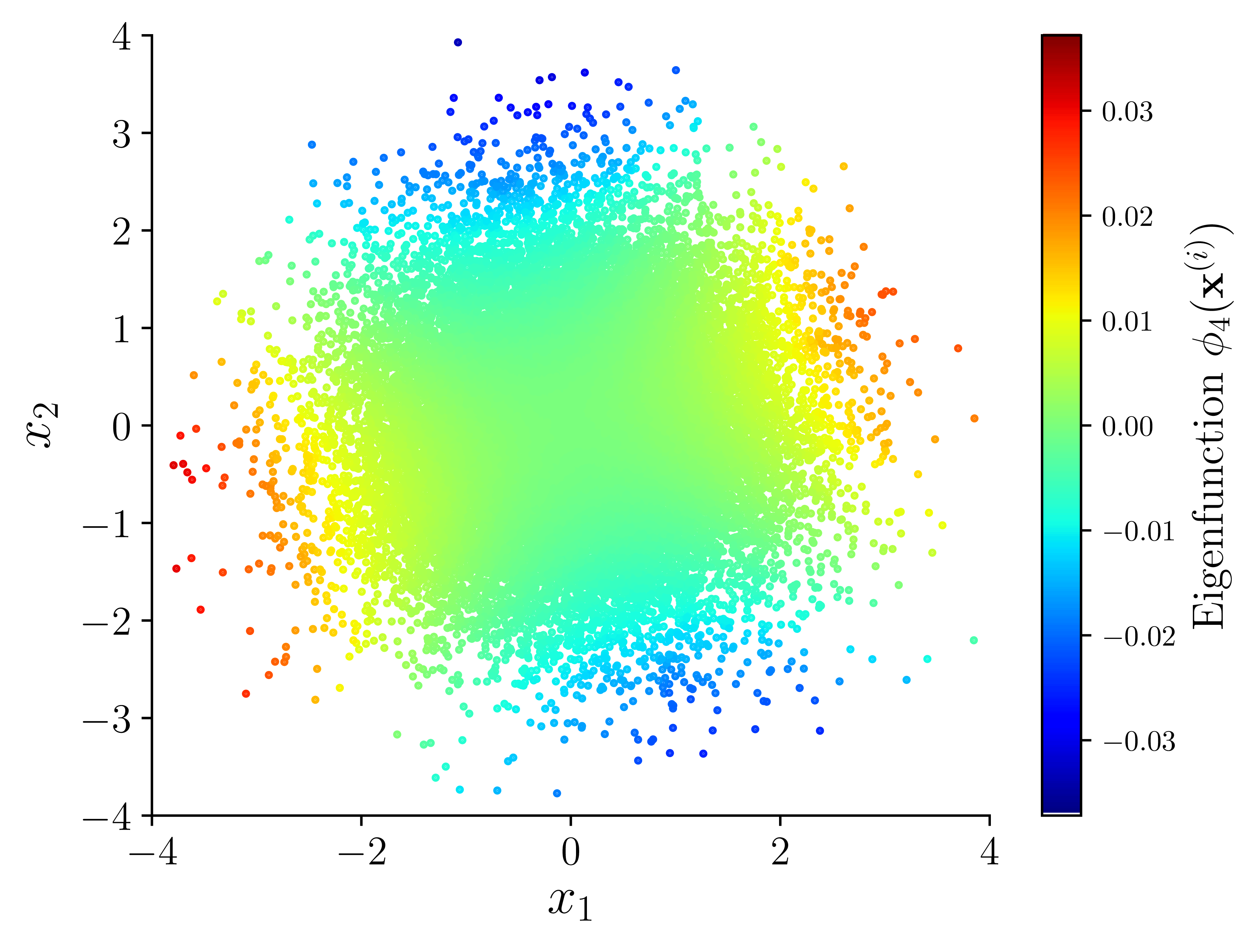}
    \caption{Eigenfunction $\bm\phi_4$ of $\boldsymbol{L}_{\psi, c}$}
    \label{fig:computed-fourth-eigenvector}
  \end{subfigure}
  \caption{\edits{Eigendecomposition of the discrete Kolmogorov operator $\boldsymbol{L}_{\psi, c}$ computed using $n=2.5 \times 10^{4}$ samples from a standard Gaussian distribution $\mathcal{N}(\boldsymbol{0}, \boldsymbol{I})$ on $\Omega = \mathbb{R}^{2}$. Panel~(a) shows the first 100 eigenvalues $\hat \lambda_i $ (in decreasing order). Panel~(b) shows} the eigenvector $\bm \phi_4$ corresponding to the $4^{th}$ smallest eigenvalue $\hat \lambda_j $. We use the same density estimation parameters as in Fig.~\ref{fig:density-estimation} along with numerical parameters $\beta = -0.25$ and $c = 1$ (implying $\alpha=0$ since $d=2$). We estimate the optimal bandwidth parameter $\epsilon$ using the procedure outlined in Section \ref{sec:optimal-bandwidth}.}
  \label{fig:laplace-operator-eigenvalues}
\end{figure}

\subsection{Bandwidth parameter tuning} \label{sec:optimal-bandwidth}

The procedures estimating the density $\psi$ and computing the discrete weighted Laplace operator $\boldsymbol{L}_{\psi, c}$ are sensitive to the choice of bandwidth parameter $\epsilon$. We, therefore, use an automatic tuning procedure described by Coifman et al.\ \cite{CoifmanEtAl08}, which is also employed in \cite{berryharlim2016}. See Algorithm~1 \cite{giannakis2019} for pseudocode. 

Let $\xi \in \mathbb{R}$ parameterize the bandwidth parameter $\epsilon_{\xi}^2 = 2^{\xi}$, and define
\begin{equation}
    \chi_{\xi} = \displaystyle\sum_{i,j=1}^{n} K_{\epsilon_{\xi}}^{(ij)}\quad \mbox{and}, \quad \chi_{\xi, \beta} = \sum_{i,j=1}^{n} K_{\epsilon_{\xi}, \beta}^{(ij)}.
\end{equation}
The optimal bandwidth parameters for the density estimation and operator estimation procedures maximize
\begin{equation}
    \chi_{\xi}^{\prime} = \frac{\log{(\chi_{\xi+\delta})}-\log{(\chi_{\xi})}}{\delta \log{(2)}} \quad \mbox{and} \quad  \chi_{\xi, \beta}^{\prime} = \frac{\log{(\chi_{\xi+\delta, \beta})}-\log{(\chi_{\xi, \beta})}}{\delta \log{(2)}},
    \label{eq:bandwidth-sensitivity}
\end{equation}
respectively. A scaling argument \cite{CoifmanEtAl08} shows that for a well-tuned kernel, the maximum values of $\chi^{\prime}_\xi$ and $\chi^{\prime}_{\xi,\beta}$ approximate the intrinsic dimension of the manifold $\Omega$. Figure~\ref{fig:bandwidth-tuning} plots $\chi_{\xi}'$ and $\chi_{\xi,\beta}'$ as a function of $\epsilon$, and shows the optimal bandwidth parameter used in the $2$-dimensional Gaussian examples of Figs.~\ref{fig:density-estimation}~and~\ref{fig:laplace-operator-eigenvalues}. Practically, we compute the maximum using a derivative-free optimization algorithm available in \texttt{NLopt} \cite{nlopt}.

\begin{figure}[h!]
  \centering
  \begin{subfigure}{0.45\textwidth}
    \includegraphics[width=1.0\textwidth]{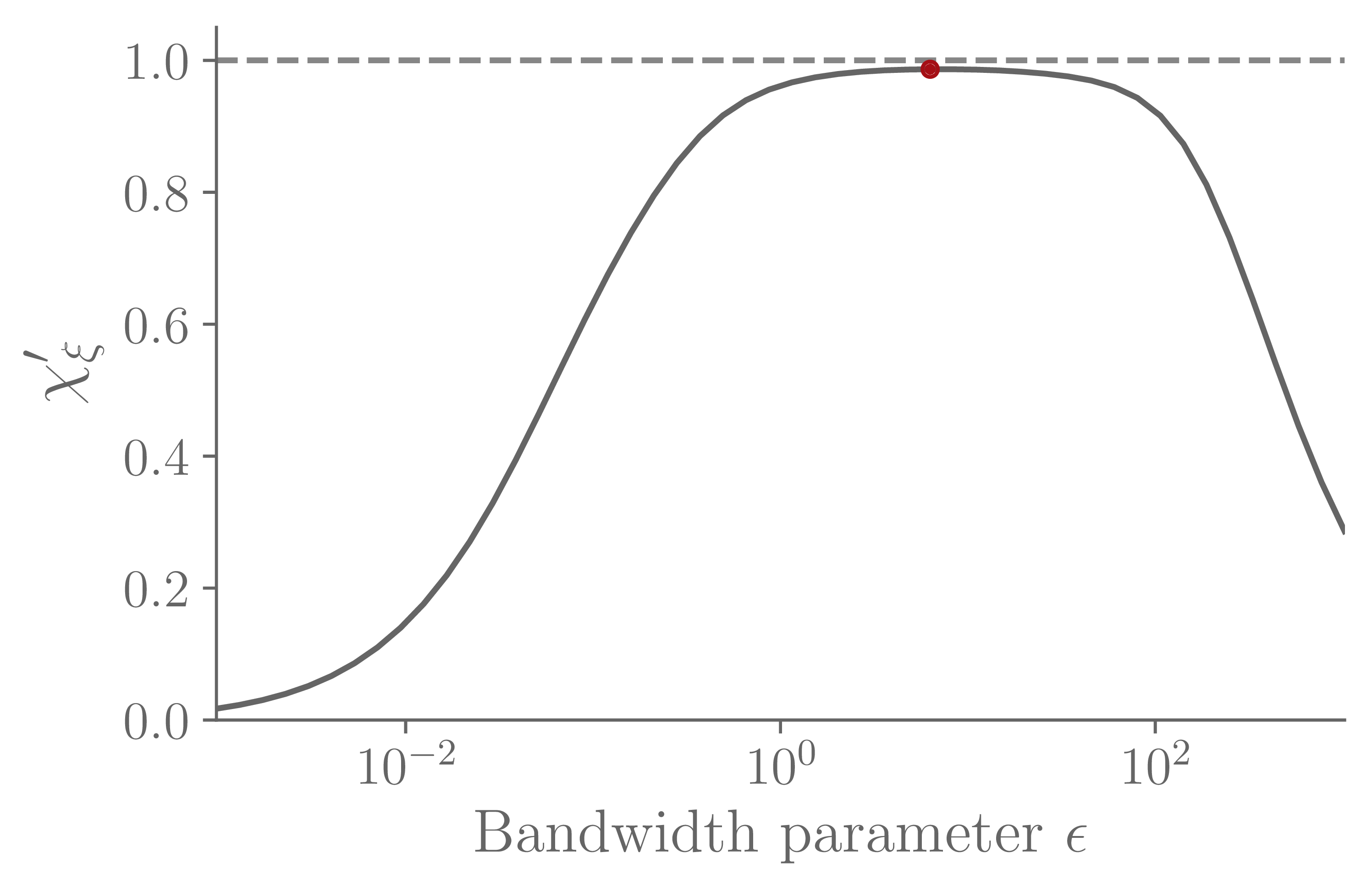}
    \caption{Density estimation}
  \end{subfigure}
  \begin{subfigure}{0.45\textwidth}
    \includegraphics[width=1.0\textwidth]{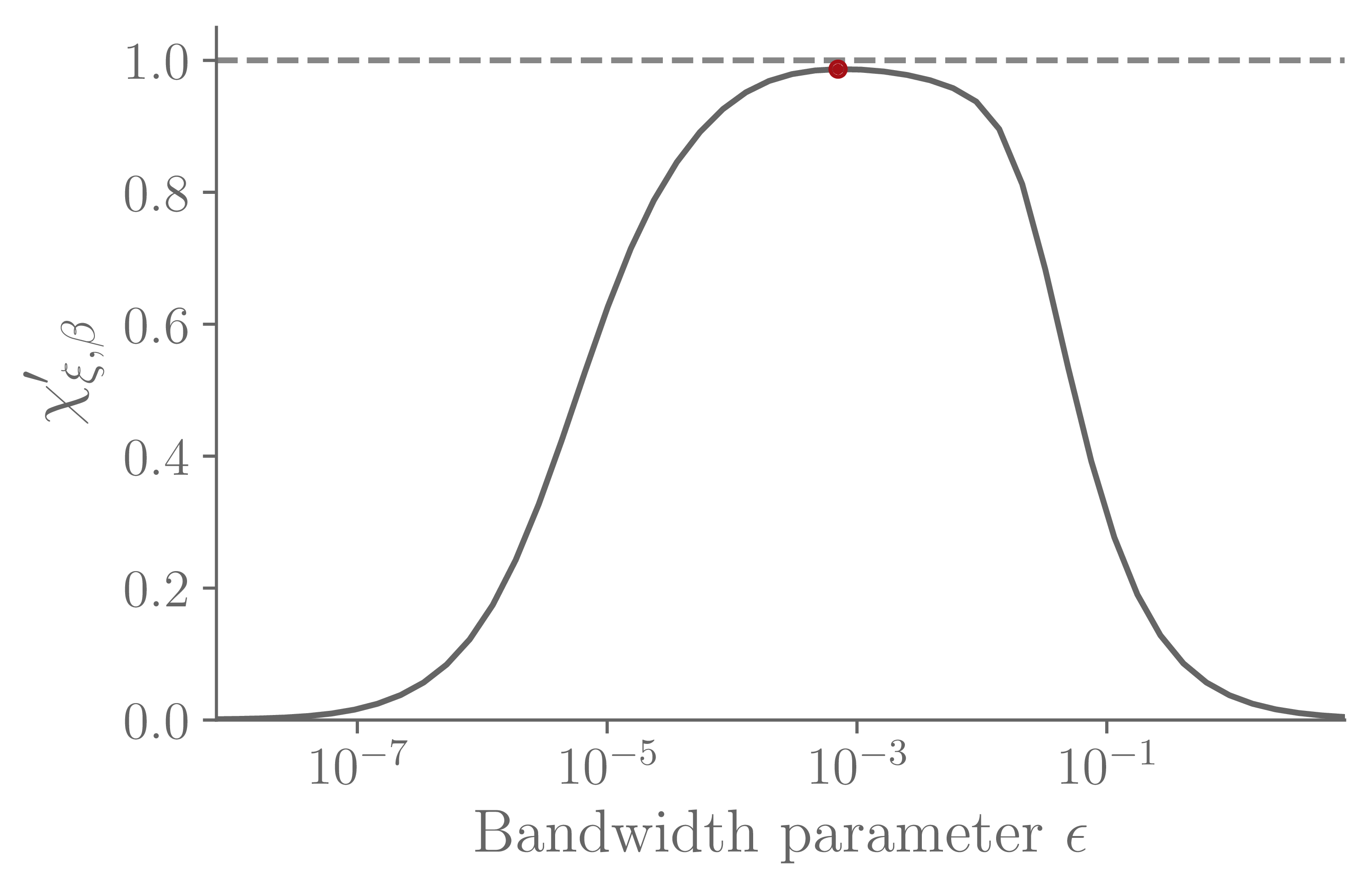}
    \caption{Operator estimation}
  \end{subfigure}
  \caption{The optimal bandwidth parameter $\epsilon$ for (a) density and (b) operator estimation maximizes $\chi_{\xi}'$ and $\chi_{\xi,\beta}'$ from~\eqref{eq:bandwidth-sensitivity}, respectively. In both cases, we set the step size parameter $\delta = 1$. The manifold dimension $d$ is estimated by $ 2 \max{(\chi_{\epsilon}^{\prime})} $ or $  2 \max{(\chi_{\epsilon, \beta}^{\prime})}$, shown in horizontal dashed lines. In this example, $d=2$.}
  \label{fig:bandwidth-tuning}
\end{figure}

\section{Spectral representation of the Kolmogorov problem} \label{sec:solution-gradients}

We compute solutions to the Kolmogorov problem in~\eqref{eq:Kolmogorov-problem}, and represent gradient vector fields, by expanding functions using the spectral basis of the Kolmogorov operator. We use estimates of the eigenvalues and eigenfunctions $( \lambda_j, \phi_j)$ from~\eqref{eqEigs} to solve the Kolmogorov problem (i.e., invert the operator) and/or estimate derivatives. Specifically, given function values $g(\boldsymbol{x}^{(i)})$ at samples from the probability measure $\boldsymbol{x}^{(i)} \sim \psi$, we approximate the solution $f$ of~\eqref{eq:Kolmogorov-problem}, and estimate the gradient vector fields $\nabla f$ and/or $\nabla g$. We first discuss how to use the eigendecomposition of the discrete Kolmogorov operator in~\eqref{eqDiscreteEigs} to solve the Kolmogorov problem, and then discuss how to represent gradient vector fields. 


\subsection{Solutions of the Kolmogorov problem} \label{sec:solution}

Let $H$ be the $n$-dimensional inner product space $\mathbb R^n$ equipped with the $\langle \cdot, \cdot \rangle_{\bm S}$ inner product from~\eqref{eqInnerProd} and the corresponding norm $ \lVert \bm f \rVert_{\bm S} = \sqrt{\langle \bm f, \bm f \rangle_{\bm S}}$. For any positive integer parameter $\ell \leq n$, define the $\ell$-dimensional subspace $H_\ell \subseteq H$ spanned by the leading $\ell$ nonzero eigenvectors of $\bm L_{\psi,c}$ with nonzero corresponding eigenvalues, $ H_\ell = \spn \{ \bm \phi_1, \ldots, \bm \phi_\ell \}$. Let also $ \bm Q_\ell $ be the $n \times \ell$ matrix whose columns are the eigenvectors $\bm \phi_j$, i.e., $\bm Q_\ell = ( \bm \phi_1 \cdots \bm \phi_\ell)$. Viewed as a linear map $\bm Q_\ell : \mathbb R^\ell \to H $, $\bm Q_\ell$ is a 1-1 map with range $H_\ell $, and preserves the standard inner product of $\mathbb R^\ell$, i.e., $ \bm u^\top \bm v = \langle \bm Q_\ell \bm u, \bm Q_\ell \bm v \rangle_{\bm S}$ for any $ \bm u, \bm v \in \mathbb R^\ell$.

Given the discrete Kolmogorov operator $\boldsymbol{L}_{\psi, c}$, we approximate the solution $f$ of the continuous problem as the minimizer $ \bm f \in H_\ell$ of the square error functional $\mathcal E_\ell : H_\ell \to \mathbb R^+ $ with
\begin{displaymath}
    \mathcal E_\ell(\bm h) = \lVert \bm L_{\psi, c} \bm h - \bm g \rVert^2_{\bm S}.
\end{displaymath}
Setting $ \bm h = \bm Q_\ell \widetilde{ \bm h } $, where $\widetilde{ \bm h } \in \mathbb R^\ell $ is the (unique) vector of expansion coefficients of $\bm h \in H_\ell$ with respect to the $ \{ \bm \phi_j \} $ basis, the minimization of $\mathcal E_{\ell}$ is equivalent to the minimization of $\widetilde {\mathcal E}_\ell : \mathbb R^\ell \to \mathbb R_+$, where 
\begin{displaymath}
    \widetilde {\mathcal E}_\ell(\widetilde{\bm h}) = \lVert \bm L_{\psi, c} \bm Q_{\ell} \widetilde{\bm h} - \bm g \rVert^2_{\bm S}.
\end{displaymath}
The minimizer of this functional is 
\begin{equation}
    \label{eqLSMinimizer}
    \tilde{\bm f} = (\bm L_{\psi, c} \bm Q_{\ell})^\dag \bm g = \bm \Lambda^{-1}_\ell \widetilde{\bm g}_\ell, \quad \widetilde{ \bm g}_\ell := \frac{1}{n} \bm Q_\ell^\top \bm S^2 \bm g,
\end{equation}
where $^\dag $ denotes the pseudoinverse of linear maps from $\mathbb R^\ell$ to $H$, and $\bm \Lambda_\ell$ is the diagonal matrix with diagonal entries $\Lambda^{(ii)} = \hat \lambda_i$. Moreover, the vector $\widetilde{\bm g}_\ell \in \mathbb R^\ell$ gives the expansion coefficients of the orthogonal projection $\bm g_\ell$ of $\bm g $ onto $H_\ell$ with respect to the $ \{ \bm \phi_j \} $ basis; specifically, $\bm g_\ell = \bm Q_\ell \widetilde{\bm g}_\ell $. Figure~\ref{fig:function-expansion} shows $\bm g_\ell $ for $\ell = 100$ associated with the continuous function $g(\boldsymbol{x}) = x_1$ on $\Omega = \mathbb R^2$ for a dataset of $n=10^4$ samples from a standard Gaussian distribution in $\mathbb{R}^{2}$ as in Fig.~\ref{fig:laplace-operator-eigenvalues}. \edits{See Section~\ref{sec:Gaussian-toy-example} for a discussion of these results.}

Having computed $\bm{\tilde{f}}$, we then construct the vector 
\begin{equation}
    \label{eqFLS}
    \bm f = \bm Q_\ell \widetilde{\bm f},
\end{equation}
which is the (unique) least-squares solution of the discrete Kolmogorov problem in~\eqref{eqDiscreteKolmogorov} lying in the $\ell$-dimensional hypothesis space $H_\ell$. Note that $H_\ell$ is, by construction, orthogonal to the constant eigenvector $\bm \phi_0$, which enforces the integral condition in the continuous problem in~\eqref{eq:Kolmogorov-problem} in the discrete setting. It should also be noted that in the special case $\ell = n $ with $ H_\ell = H $ the least-squares solution described above is actually a solution of~\eqref{eqDiscreteKolmogorov}; in particular, it is the unique solution orthogonal to $\bm \phi_0 $. While this solution can be computed efficiently without eigendecomposition of $\bm L_{\psi,c}$ (which would be a computationally expensive task for $\ell = n \gg 1 $), in what follows we will need the eigenvectors $\bm \phi_j $ in order to approximate the solution gradient $\nabla f $. Thus, the least-squares approach with $\ell \ll n $ is our solution method of choice in this paper, despite the additional cost of the eigendecomposition step. Restricting the solution in the subspace spanned by the leading (i.e., least oscillatory) eigenvectors of $\bm L_{\psi,c}$ also improves robustness to noise and sampling errors.


\begin{figure}
  \centering
  \begin{subfigure}{0.45\textwidth}
    \includegraphics[width=1.0\textwidth]{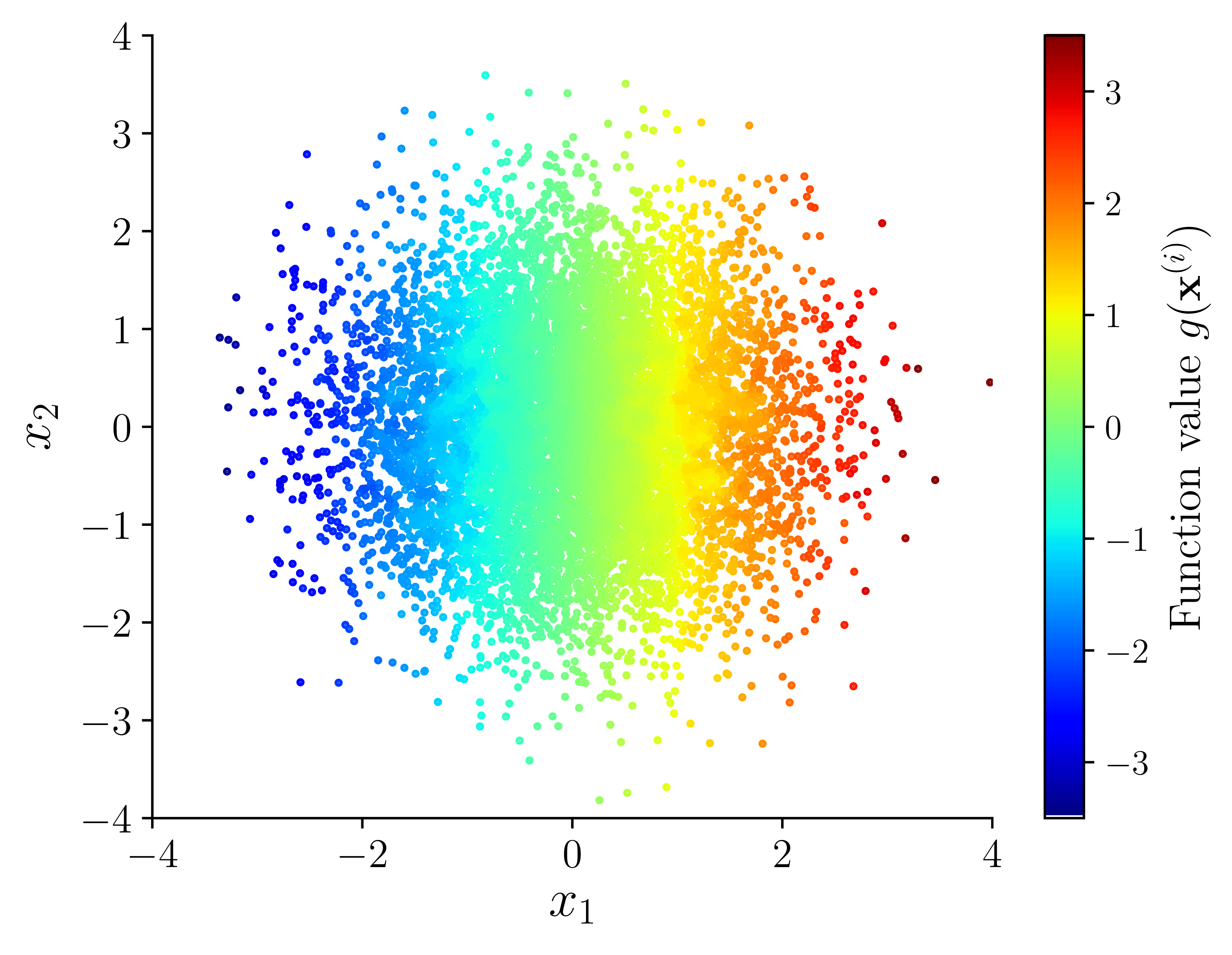}
    \caption{Spectral expansion $\boldsymbol{g}_\ell = \boldsymbol{Q}_\ell \boldsymbol{\widetilde{g}}$}
  \end{subfigure}
  \begin{subfigure}{0.45\textwidth}
    \includegraphics[width=1.0\textwidth]{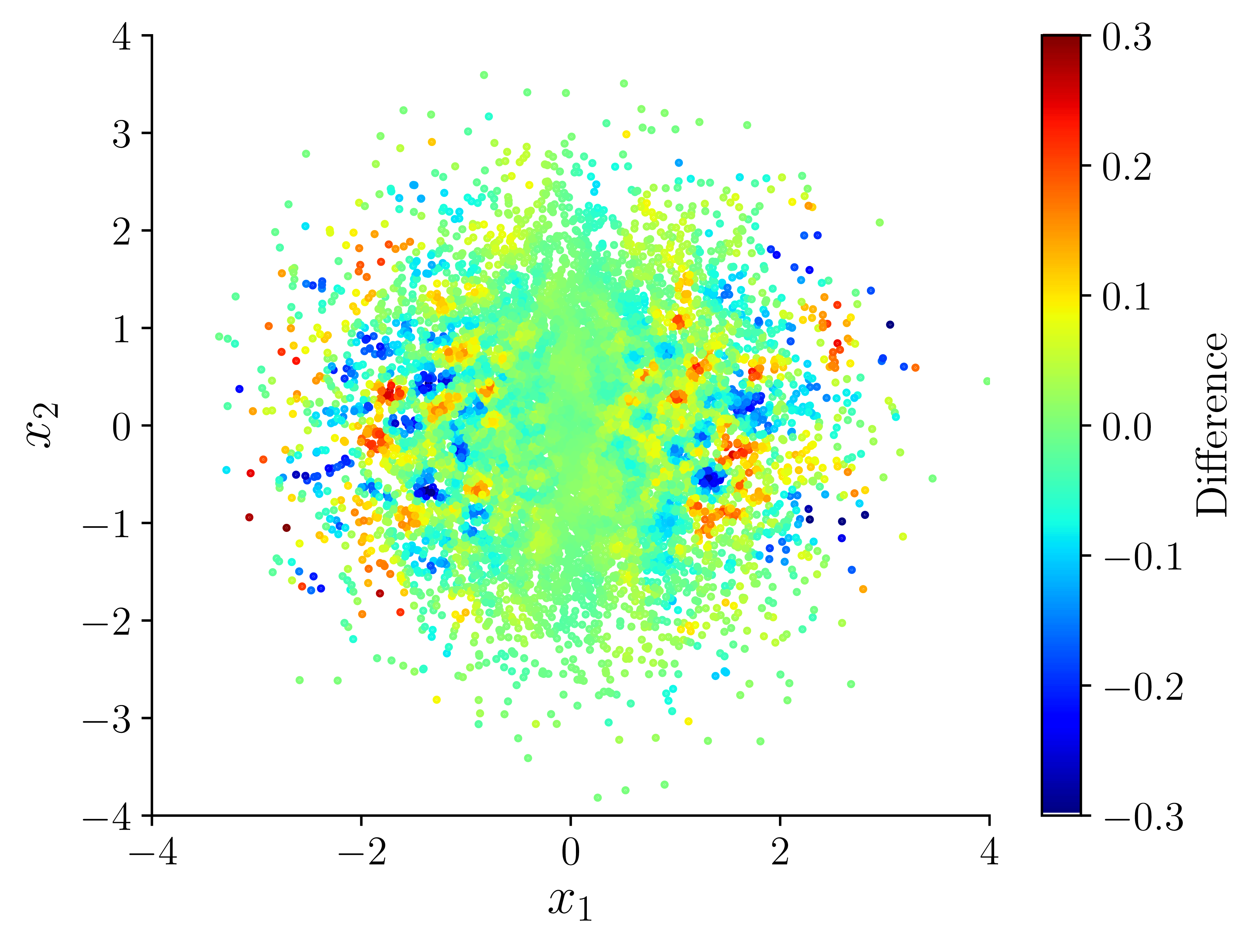}
    \caption{Difference $g(\boldsymbol{x}^{(i)})-g^{(i)}$}
  \end{subfigure}
  \caption{(a) The function $g(\boldsymbol{x}) = x_1$ expanded using the $100$ eigenvectors of $\boldsymbol{L}_{\phi, c}$ computed using the same parameters as Fig.~\ref{fig:laplace-operator-eigenvalues}. (b) The difference between the estimated function at each sample and the true function values, $g(\boldsymbol{x}^{(i)})-g^{(i)}$, where $g^{(i)}$ is the $i^{th}$ component of the vector $\boldsymbol{g}_\ell = \boldsymbol{Q}_\ell \boldsymbol{\widetilde{g}}_\ell$.}
  \label{fig:function-expansion}
\end{figure}

%

\subsection{\label{sec:gradients}Gradient representation using the eigenbasis}

Given the discrete solution $\bm f$ in~\eqref{eqFLS}, our next task is to approximate the gradient vector field $\nabla f$. For that, we employ an analogous approach to that developed in the context of the spectral exterior calculus \cite{BerryGiannakis20}, which is based on the so-called Carr\'e du Champ identity (product rule) \cite{bakryetal2013} for the Laplace-Beltrami operator on a Riemannian manifold. Lemma~\ref{lemma:product-rule} states a simple generalization of this identity for the class of Kolmogorov operators $\mathcal L_{\psi,c}$ in~\eqref{eq:Kolmogorov}.
\begin{lemma}[Carr\'e du Champ identity for $\mathcal{L}_{\psi, c}$]
    For any two functions $u,v \in C^2(\Omega)$ the Carr\'e du Champ identity 
    \begin{displaymath}
        \mathcal{L}_{\psi, c} (uv) = u \mathcal{L}_{\psi, c} v + v \mathcal{L}_{\psi, c} u + 2 \nabla u \cdot \nabla v
    \end{displaymath}
holds.
\label{lemma:product-rule}
\end{lemma}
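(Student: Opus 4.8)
The plan is to exploit the fact that $\mathcal{L}_{\psi,c}$ splits into a second-order part and a first-order part, $\mathcal{L}_{\psi,c}f = \Delta f + c\,\nabla f \cdot \frac{\nabla\psi}{\psi}$, and to verify the claimed identity on each part separately before recombining. Since the drift coefficient $c\,\frac{\nabla\psi}{\psi}$ does not depend on $u$ or $v$, the whole identity will follow from just two standard Leibniz rules on a Riemannian manifold: the product rule for the gradient, $\nabla(uv) = u\,\nabla v + v\,\nabla u$, and the analogous ``carr\'e du champ'' identity for the Laplace--Beltrami operator, $\Delta(uv) = u\,\Delta v + v\,\Delta u + 2\,\nabla u \cdot \nabla v$.

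First I would establish (or simply cite as standard) the Laplacian identity, since it carries essentially all the content of the lemma. Writing $\Delta = \operatorname{div} \circ \nabla$ and applying the gradient product rule gives $\nabla(uv) = u\,\nabla v + v\,\nabla u$; taking divergence and using the product rule for divergence against a vector field, $\operatorname{div}(f X) = f\operatorname{div} X + \nabla f \cdot X$, applied to $X = \nabla v$ and $X = \nabla u$, produces $\Delta(uv) = u\,\Delta v + (\nabla u \cdot \nabla v) + v\,\Delta u + (\nabla v \cdot \nabla u)$, which collects to $u\,\Delta v + v\,\Delta u + 2\,\nabla u \cdot \nabla v$. The crucial point here is that the symmetric cross term $2\,\nabla u \cdot \nabla v$ is generated entirely by the second-order part, arising from the two ways the divergence can ``land'' on the explicit gradient factor.

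For the first-order drift term I would apply the plain gradient product rule and dot with $c\,\frac{\nabla\psi}{\psi}$, giving $c\,\nabla(uv)\cdot\frac{\nabla\psi}{\psi} = c\,u\,\nabla v\cdot\frac{\nabla\psi}{\psi} + c\,v\,\nabla u\cdot\frac{\nabla\psi}{\psi}$, with no cross term, as expected for a first-order operator. Adding this to the Laplacian identity and regrouping the $u$- and $v$-weighted pieces reconstitutes $u\bigl(\Delta v + c\,\nabla v\cdot\frac{\nabla\psi}{\psi}\bigr) + v\bigl(\Delta u + c\,\nabla u\cdot\frac{\nabla\psi}{\psi}\bigr) + 2\,\nabla u\cdot\nabla v = u\,\mathcal{L}_{\psi,c} v + v\,\mathcal{L}_{\psi,c} u + 2\,\nabla u\cdot\nabla v$, as claimed. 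The $C^2$ regularity assumption on $u$ and $v$ is exactly what is needed for all the gradients, divergences, and second derivatives above to be defined pointwise. There is no genuine obstacle here; the only thing to be careful about is bookkeeping the cross term, namely recognizing that the $2\,\nabla u\cdot\nabla v$ contribution comes solely from $\Delta$ and that the drift contributes nothing beyond the two weighted single terms, so that the final grouping cleanly reproduces $\mathcal{L}_{\psi,c}$ acting on $u$ and on $v$.
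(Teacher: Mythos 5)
Your proof is correct and follows essentially the same route as the paper's: split $\mathcal{L}_{\psi,c}$ into the Laplacian and the drift term, apply the carr\'e du champ identity for $\Delta$ (which the paper simply invokes and you additionally derive from $\Delta = \operatorname{div}\circ\nabla$) and the gradient product rule for the drift, then regroup. The only difference is that you justify the Laplacian product rule explicitly where the paper states it as known.
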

\begin{proof}
 The identity follows from the definition 
\begin{eqnarray*}
    \mathcal{L}_{\psi, c} (uv) &=& \Delta (uv) + c \nabla (uv) \cdot \frac{\nabla \psi}{\psi} \\
    &=& u \nabla^2 v + v \nabla^2 u + 2 \nabla u \cdot \nabla v + c u \nabla v \cdot \frac{\nabla \psi}{\psi} + c v \nabla u \cdot \frac{\nabla \psi}{\psi} \\
    &=& u \mathcal{L}_{\psi, c} v + v \mathcal{L}_{\psi, c} u + 2 \nabla u \cdot \nabla v.\hspace{4mm} \qed
\end{eqnarray*}
\end{proof}

Lemma~\ref{lemma:product-rule} allows us to express the Riemannian inner product $\nabla u \cdot \nabla v$, which is independent of the density $\psi$, in terms of the action of the Kolmogorov operator on the functions $u$ and $v$. Choosing $v(\boldsymbol{x}) = x_{s}$---where $x_{s}$ denotes the $s^{th}$ coordinate of $\boldsymbol{x}$---for example, isolates the derivative of $u$ with respect to the $s^{th}$ coordinate, i.e., 
\begin{equation}
    \label{eqGradU}
    \frac{\partial u}{\partial x_s} = \nabla u \cdot \nabla v = \frac{\mathcal{L}_{\psi, c} (uv) - u \mathcal{L}_{\psi, c} v - v \mathcal{L}_{\psi, c} u}{2}.
\end{equation}

Given $u,v \in C^2(\Omega) \cap \mathcal H_{\psi,c}$, we can represent $\nabla u \cdot \nabla v$ using the eigenbasis of $\mathcal{L}_{\psi, c}$ \cite{BerryGiannakis20}. Recall that $(\lambda_j, \phi_j)$ are eigenvalue/eigenfunction pairs such that $\mathcal{L}_{\psi, c} \phi_j = \lambda_j \phi_j$. The identity in Lemma~\ref{lemma:product-rule} implies that 
\begin{equation}
    \nabla \phi_j \cdot \nabla \phi_k = \frac{1}{2} ( \mathcal{L}_{\psi, c} (\phi_j \phi_k) - (\lambda_j + \lambda_k) \phi_j \phi_k ).
    \label{eq:eigenfunction-gradient-dot-product}
\end{equation}
Moreover, defining the coefficients
\begin{equation}
    C_{ljk} = \langle \phi_l, \phi_j \phi_k \rangle_{\psi,c} = \int_\Omega \phi_l(\bm x) \phi_j(\bm x) \phi_k(\bm x) \psi^c(\bm x) \, d\boldsymbol{x}, 
    \label{eqCIJK}
\end{equation}
we have
\begin{displaymath}
    \phi_j \phi_k = \sum_{l=0}^{\infty} C_{ljk} \phi_l,
\end{displaymath}
where the sum over $l$ in the right-hand side converges in the $\mathcal H_{\psi,c}$ norm. Substituting this into~\eqref{eq:eigenfunction-gradient-dot-product} derives
\begin{displaymath}
    \nabla \phi_j \cdot \nabla \phi_k = \sum_{l=0}^{\infty} \frac{C_{ljk}}{2} (\lambda_l - \lambda_j - \lambda_k) \phi_l.
\end{displaymath}
Decomposing $ u = \sum_{j=0}^\infty \tilde u_j \phi_j$ and $ v = \sum_{k=0}^\infty \tilde v_k \phi_k$, where $ \tilde u_j = \langle \phi_j, u \rangle_{\psi,c}$ and $\tilde v_k = \langle \phi_k, v \rangle_{\psi,c}$, it then follows that
\begin{align*}
    \nabla u \cdot \nabla v &= \sum_{j,k=0}^\infty \tilde u_j \tilde v_k  \nabla \phi_j \cdot \nabla \phi_k \\
    &= \sum_{j,k,l=0}^\infty \tilde u_j \tilde v_k \frac{C_{ljk}}{2} (\lambda_l - \lambda_j - \lambda_k) \phi_l.
\end{align*}
Choosing, as in~\eqref{eqGradU}, the function $v(\boldsymbol{x}) = x_s$ that picks the $s^{th}$ coordinate index, and computing  the expansion coefficients
\begin{displaymath}
    \tilde{s}_k \equiv \tilde{v}_k = \langle \phi_k, v \rangle_{\psi,c} = \int_\Omega \phi_k(\bm x) x_s \psi^c(\bm x) \, d\bm x, 
\end{displaymath}
we arrive at the relation
\begin{equation}
    \label{eqGradUPhi}
    \frac{\partial u}{\partial x_s} = \sum_{j,k,l=0}^\infty \tilde u_j \tilde s_k \frac{C_{ljk}}{2} (\lambda_l - \lambda_j - \lambda_k) \phi_l.
\end{equation}

Equation~\eqref{eqGradUPhi} provides a spectral representation of the components of gradients of functions $u$ with respect to the Riemannian metric on $\Omega$. Importantly, each of the terms in the right-hand-side has a direct data-driven analog obtained from the eigendecomposition of the discrete Kolmogorov operator $\bm L_{\psi,c}$. To effect that approximation, we replace the eigenpairs $(\lambda_j,\phi_j)$ with their discrete counterparts $(\hat \lambda_j, \bm \phi_j)$, compute the corresponding coefficients $ \hat C_{ljk} := \langle \bm \phi_l, \bm \phi_j * \bm \phi_k \rangle_{\bm S} $ (cf.~\eqref{eqCIJK}),  where $*$ is the component-wise product operator between vectors, and approximate the expansion coefficients $ \tilde u_j$ and $\tilde v_k^{(s)}$ by $\hat u_j = \langle \bm u, \bm \phi_j \rangle_{\bm S} $ and $\hat v_k^{(s)} = \langle \bm v^{(s)}, \bm \phi_k \rangle_{\bm S}$, respectively. We also truncate the infinite sums over $l,j,k$ after $\ell$ eigenvalue--eigenvector pairs. 

Figure~\ref{fig:gradient-example} shows the result of this procedure for the function $g(\boldsymbol{x}) = \boldsymbol{x} \cdot \boldsymbol{r}$ for a constant vector $\boldsymbol{r} \in \mathbb{R}^{2}$. We know $\nabla g = \boldsymbol{r}$ analytically, so in Fig.~\ref{fig:gradient-example} we compare the analytical and data-driven gradients for the prescribed function $g$. In this paper, our primary interest is to apply the discrete analog of~\eqref{eqGradUPhi} to approximate the gradient $\nabla g$, where $\bm g$ is given by~\eqref{eqFLS}. Figure~\ref{fig:weighted-laplace-solution} shows the approximate solution $\bm f$ for the weighted Kolmogorov problem with right-hand-side function $g(\boldsymbol{x})$. We present solutions given parameter choices $c=0.1$ and $c=1$. The latter more strongly biases the curvature of the distribution, which is reflected in Fig.~\ref{fig:weighted-laplace-solution}. We investigate the effect of varying $c$ in more detail in Section~\ref{sec:spherical-manifold-example}.


\begin{figure}
  \centering
  \begin{subfigure}{0.45\textwidth}
    \includegraphics[width=1.0\textwidth]{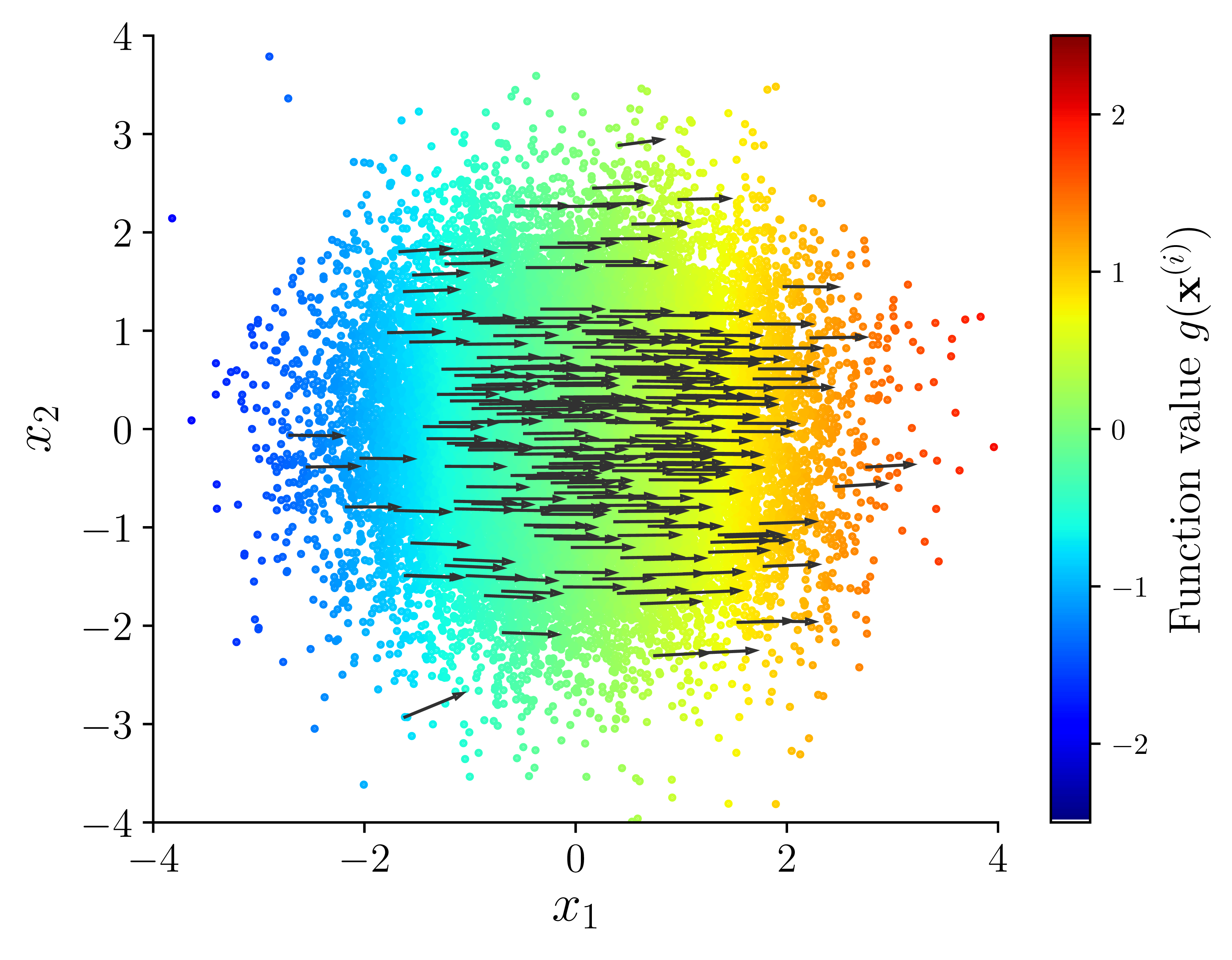}
    \caption{$\boldsymbol{r} = (1, 0)$, $c=0.5$}
  \end{subfigure}
  \begin{subfigure}{0.45\textwidth}
    \includegraphics[width=1.0\textwidth]{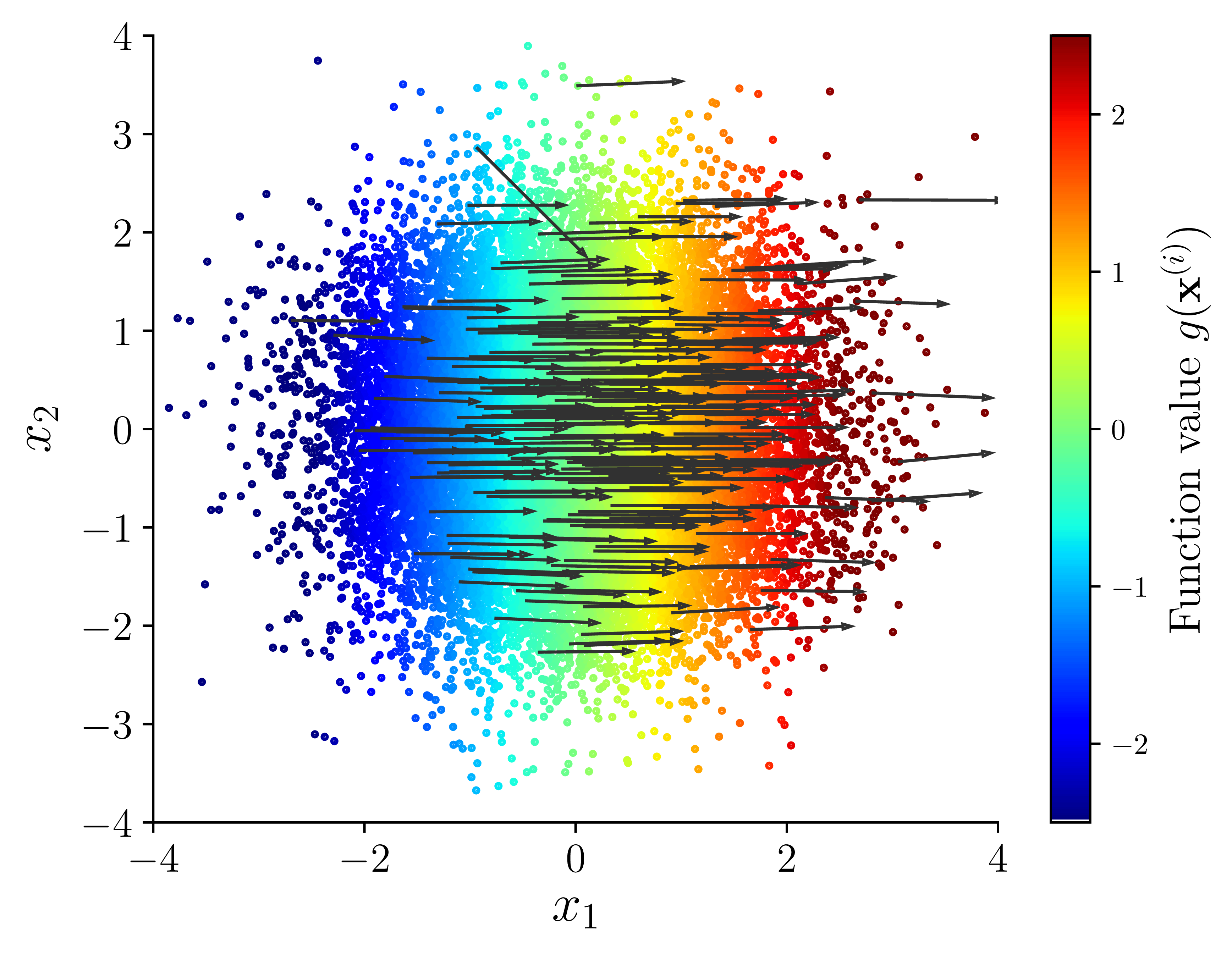}
    \caption{$\boldsymbol{r} = (1, 0)$, $c=1$}
  \end{subfigure}
  \caption{Estimated gradient vector field (arrows) for the function $g(\boldsymbol{x}) = c \boldsymbol{x} \cdot \boldsymbol{r}$ with $\boldsymbol{r} = (1, 0)$ (colors) using the eigendecomposition of the discrete Kolmogorov operator $\boldsymbol{L}_{\psi, c}$. The operator $\boldsymbol{L}_{\psi, c}$ is constructed using $n=10^{4}$ samples from a standard Gaussian distribution on $\mathbb{R}^{2}$, setting (a) $c=0.5$ and (b) $c=1$ (see also Fig.~\ref{fig:laplace-operator-eigenvalues}). We represent $g$ and its gradient using $\ell = 100$ eigenvalue/eigenvector pairs. The exact gradient is $\nabla g = c \boldsymbol{r}$.}
  \label{fig:gradient-example}
\end{figure}
\begin{figure}[h!]
  \centering
  \begin{subfigure}{0.45\textwidth}
    \includegraphics[width=1.0\textwidth]{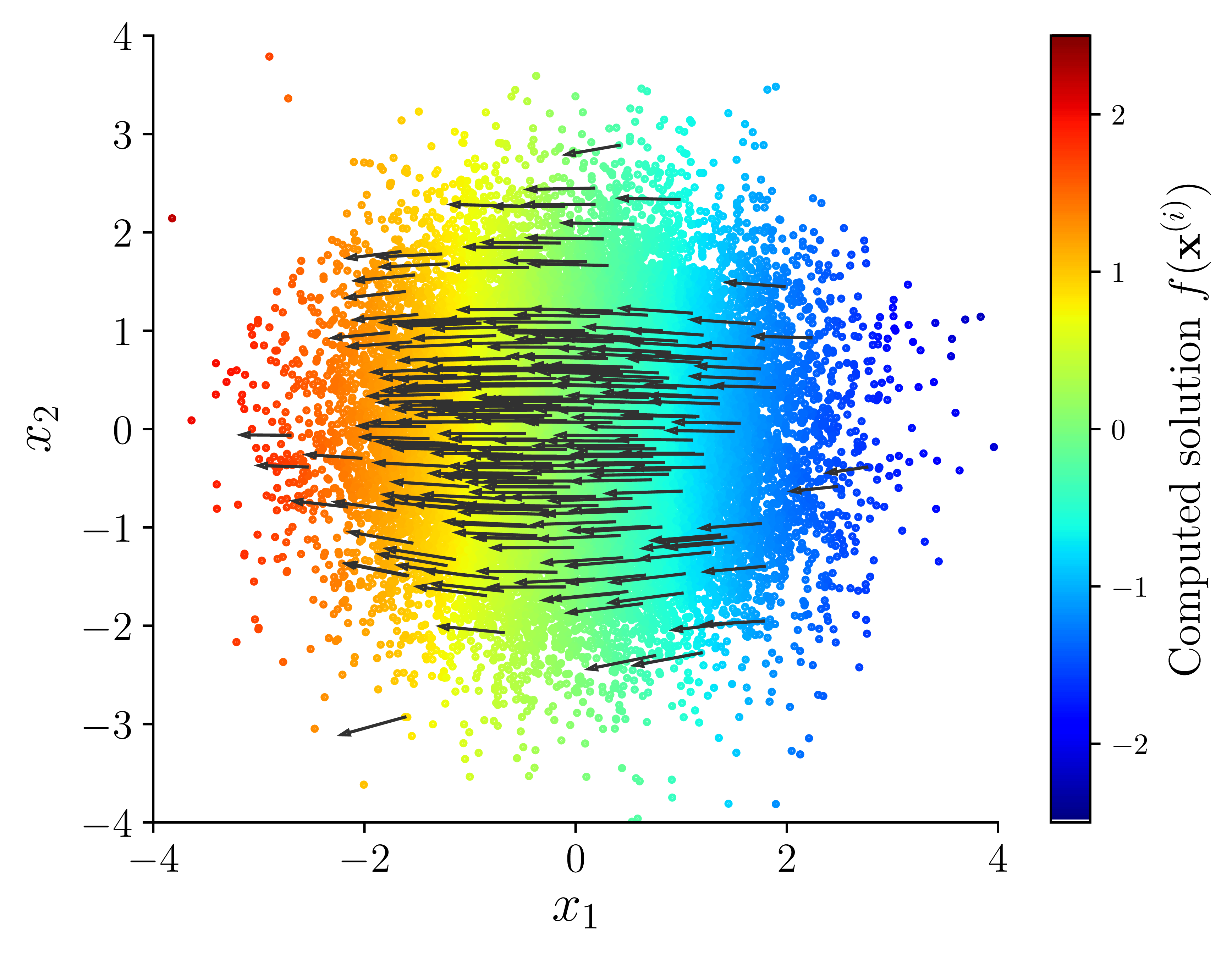}
    \caption{$f$ and $\nabla f$, $c=0.5$}
  \end{subfigure}
  \begin{subfigure}{0.45\textwidth}
    \includegraphics[width=1.0\textwidth]{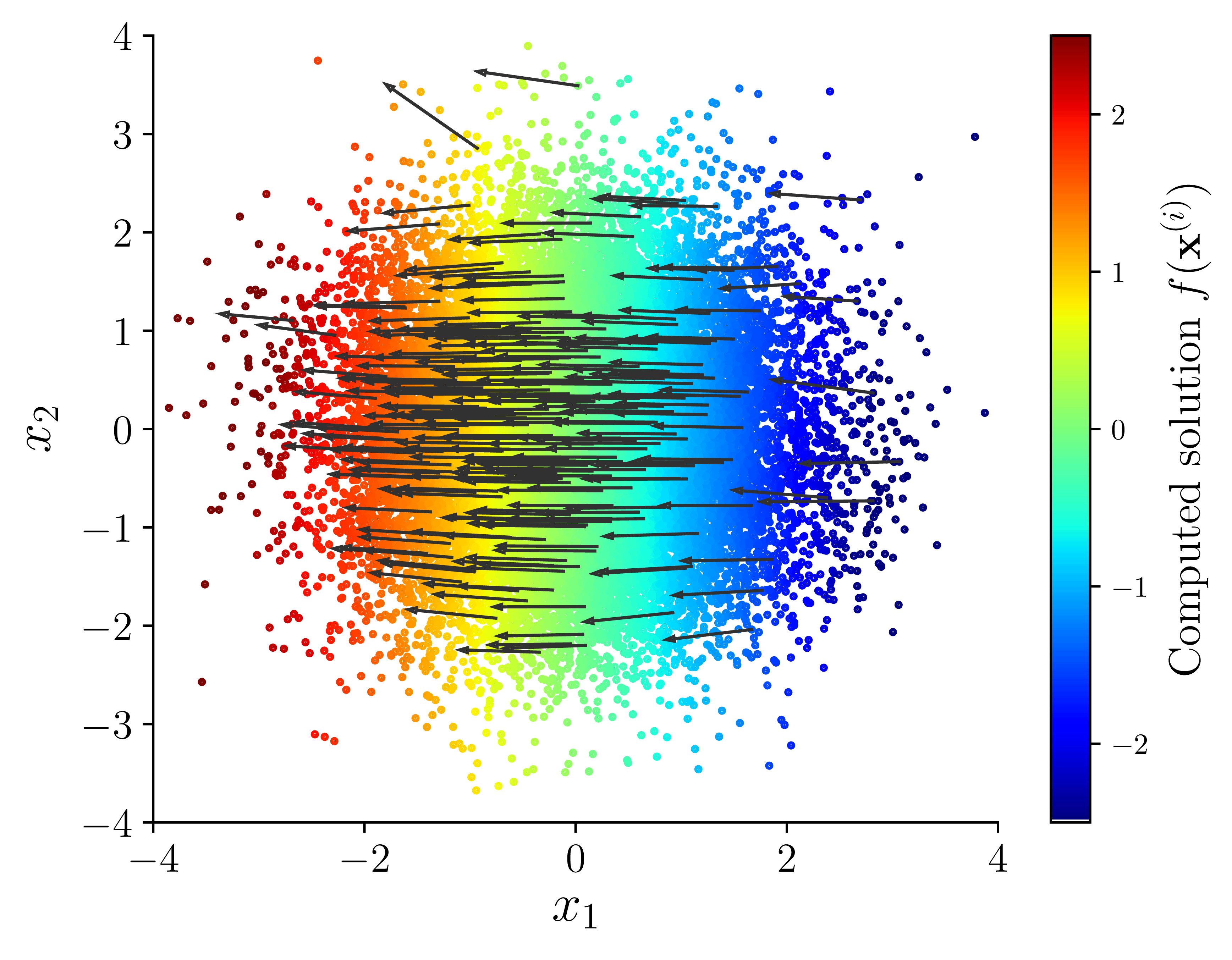}
    \caption{$f$ and $\nabla f$, $c=1$}
  \end{subfigure}
  \caption{The least-squares solution $\bm f$ (colors) and its gradient (arrows) for the Kolmogorov problem $\boldsymbol{L}_{\psi, c} \bm f = \bm g$ for (a) $c=0.5$ and (b) $c=1$ with $n=10^{4}$ samples from a standard Gaussian distribution on $\mathbb{R}^{2}$. The right-hand side function $g(\boldsymbol{x}) = c x_1$ is approximated in the Kolmogorov basis as $\boldsymbol{g}_\ell = \boldsymbol{Q} \boldsymbol{\widetilde{g}}$ (see Fig.~\ref{fig:function-expansion}~and~\ref{fig:gradient-example}). As in Fig.~\ref{fig:gradient-example}, we represent $\bm f$ and its gradient using $\ell = 3 \log n$ eigenvalue/eigenvector pairs. \edits{In both cases, the exact solution is $f(\bm{x}) = -x_1$.}}
  \label{fig:weighted-laplace-solution}
\end{figure}


\section{Implementation} \label{sec:implementation}

Given $n$ samples, we divide the computation into four major steps: 
\begin{enumerate}
    \item Compute the kernel matrix $\boldsymbol{K}_{\epsilon}$ in~\eqref{eq:density-kernel-matrix} and the estimated density function $\boldsymbol{\psi} = \boldsymbol{W}^{-1} \boldsymbol{K}_{\epsilon} \boldsymbol{1}$ in \eqref{eq:density-estimation}.
    \item Compute the variable-bandwidth kernel matrices $\boldsymbol{K}_{\epsilon, \beta}$ (unnormalized) and $\boldsymbol{K}_{\epsilon, \beta, \alpha}$ (normalized) from \eqref{eq:variable-bandwidth-kernel-matrix} and~\eqref{eq:normalized-variable-bandwidth-kernel-matrix}, respectively.
    \item Using $\boldsymbol{K}_{\epsilon, \beta, \alpha}$, form the sparse symmetric matrix $\boldsymbol{\hat{L}}_{\psi, c}$ in~\eqref{eq:symmetric-discrete-Kolmogorov-operator}, and compute the leading $\ell$ eigenvalue/eigenvector pairs $(\hat \lambda_j, \hat{\bm \phi_j})$. Apply the similarity transformation $\bm S $ to obtain the leading $\ell$ eigenpairs $(\hat \lambda_j, \bm \phi_j)$ of the discrete Kolmogorov operator $\boldsymbol{L}_{\psi, c}$. 
    \item Use the eigendecomposition to solve the Kolmogorov problem and/or represent gradient vector fields, as described in Sections~\ref{sec:solution} and \ref{sec:gradients}.
\end{enumerate}

Within this procedure, we spend a bulk of our computational resources (i) computing the eigendecomposition of $\hat{\boldsymbol{L}}_{\psi,c}$, and (ii) computing the density kernel matrix $\boldsymbol{K}_{\epsilon}$ and the variable-bandwidth kernel matrix $\boldsymbol{K}_{\epsilon, \beta}$. In principle, computing the eigendecomposition is the most expensive component of the algorithm. In practice, existing tools, such as \texttt{Spectra}: Sparse Eigenvalue Computation Toolkit as a Redesigned ARPACK \cite{spectra}, efficiently compute the sparse eigendecompositions. The next most expensive step is assembling the matrices $\boldsymbol{K}_{\epsilon}$ and $\boldsymbol{K}_{\epsilon, \beta}$. Algorithm~\ref{alg:brute-force-kernel-computation} defines the ``brute-force'' approach of looping over each pair of samples and computing the kernel matrix entry, which is $\mathcal{O}(m n^2)$ complexity. We must evaluate the kernel $\mathcal{O}(n^2)$ times and evaluating the kernel itself scales linearly with the dimension of the ambient space $m$. Therefore, we reduce the complexity of the kernel matrix construction by leveraging a $k$-$d$ tree nearest neighbor search. 

\begin{algorithm}
 \begin{algorithmic}
    \State Given: the samples $\{\boldsymbol{x}^{(i)}\}_{i=1}^{n}$ and a bandwidth vector $\boldsymbol{b} \in \mathbb{R}^{n}$
    \State Initialize the sparse kernel matrix $\boldsymbol{K} \in \mathbb{R}^{n \times n}$ (assume unset entries are $0$)
    \For{$i \gets 1$ to $n$} \Comment{Loop through the rows of the kernel matrix}
    \For{$j \gets i$ to $n$} \Comment{Loop through the columns of the kernel matrix}
    \State Compute the entry $k = \exp{\left(-\frac{\|\boldsymbol{x}^{(i)} - \boldsymbol{x}^{(j)}\|^2}{\epsilon^2 b_i b_j}\right)}$
    \If{$k>\delta_{tol}$}
    Set the entries $K^{(ij)} = k$ and $K^{(ji)} = k$
    \EndIf
    \EndFor
    \EndFor
 \end{algorithmic}
 \caption{The ``brute force'' algorithm to compute the sparse kernel matrices---either $\boldsymbol{K}_{\epsilon}$ or $\boldsymbol{K}_{\epsilon, \beta}$. The algorithm computes $\boldsymbol{K}_{\epsilon}$ by setting $\boldsymbol{\rho} = (\rho^{(1)}, \ldots, \rho^{(n)})$ to the values $\rho^{(i)} = b(\bm x^{(i)})$ of the bandwidth function defined in~\eqref{eq:bandwdith-parameter} and $\boldsymbol{K}_{\epsilon, \beta}$ by setting $ \rho^{(i)} = 2 \hat\psi^\beta(\bm x^{(i)})$, where $\hat \psi$ is the density estimate from~\eqref{eqDensEst}.  This algorithm requires two parameters: (i) the bandwidth parameter $\epsilon>0$ and (ii) a tolerance $\delta_{tol}$ to render the resulting matrices sparse. In the $\boldsymbol{K}_{\epsilon, \beta}$ case we also define the variable bandwidth parameter $\beta$. The complexity is $\mathcal{O}(m n^2)$.}
 \label{alg:brute-force-kernel-computation}
\end{algorithm}

\subsection{Binary search trees}

A $k$-$d$ tree stores each sample as leaf nodes of a binary tree. Each non-leaf node implicitly defines a hyper-plane such that the left sub-tree contains only points on the ``left'' side of the plane and the right sub-tree contains only points on the ``right'' side \cite{bentley1975}. Efficiently finding nearest neighbors using $k$-$d$ trees is a well-studied problem \cite{aryaetal1996,aryaetal1998,bentley1975,cormenetal2009,friedmanetal1977,sproull1991}. Constructing a $k$-$d$ tree is $\mathcal{O}(m n \log n)$ complexity and after construction finding the closest $s$ neighbors in the set of samples is a $\mathcal{O}(c(m) s \log n)$ operation, where $c(m)$ is a dimension-dependent function that grows at least as fast as $2^{m}$. Empirically, $c(m)$ grows rapidly with dimension $m$ and nearest-neighbor computation becomes difficult in high dimensions \cite{aryaetal1996,sproull1991}. 

Heuristically, the $k$-$d$ tree is more efficient than an exhaustive search when $2^{m} \ll n$ \cite{tothetal2017}. As $m$ grows, we resort to approximate nearest neighbor searches. Rather than finding exactly the $s$ nearest neighbors, we find $s$ points such that the distance between $\boldsymbol{x}^{(i)}$ and $\boldsymbol{x}^{(j)}$ is within $1+\delta$ of the distance between $\boldsymbol{x}^{(i)}$ and its true nearest neighbors. Finding the approximate nearest neighbors makes the constant $c(m) = m (1+6m/\delta)^{m}$ \cite{aryaetal1998}. For our examples, which are meant to demonstrate the overall method, we will use exact nearest neighbor searches using the software package \texttt{nanoflann}, which is remarkably efficient \cite{nanoflann}.

\subsection{Computing the kernel matrix with a $k$-$d$ tree}

We include a version of our algorithm to compute the kernel matrix for the sake of completeness and to explicitly highlight the computational cost of estimating eigenfunctions of the Kolmogorov operator. Although this cost is dominated by computing the eigendecomposition itself, nearest-neighbor searching, especially in high dimensions, is a non-trivial component of this algorithm. In general, diffusion maps and other graph-based algorithms become more expensive---both in computational run time and storage---as the dimension of the ambient space grows (i.e., $m \gg 1$). The samples $\boldsymbol{x} \in \Omega$ are on a $d$-dimensional manifold and leveraging this structure may further improve efficiency. However, such efforts are beyond the scope of this paper and we leave them to future work. In this paper, we focus on improving efficiency with respect to the number of samples $n$ and assume the ambient dimension $m$ is sufficiently small that it does not significantly affect performance.

For each sample, we first find the $s_i$ points within a radius $R$ ball centered at $\boldsymbol{x}^{(i)}$ that corresponds to the sample pairs that result in non-zero kernel matrix entries; this is a $\mathcal{O}(s_i \log n)$ operation. We choose $R$ such that the kernel function satisfies 
\begin{equation*} 
\exp{\left(-\frac{\|\boldsymbol{x}^{(i)} - \boldsymbol{x}^{(j)}\|^2}{\epsilon^2 \rho^{(i)} \rho^{(j)}}\right)} > \delta_{tol},
\end{equation*}
where $\rho^{(i)}$ is the bandwidth function that defines the kernel (e.g., the bandwidth in~\eqref{eq:bandwdith-parameter}) evaluated at the $i^{th}$ sample and $\delta_{tol}$ is a sparsity tolerance, implying that $R = \epsilon^2 b_{max}^2 \log\delta_{tol}$, where $b_{max} = \max_{i}\{ \rho^{(i)} \}$. We then loop over each of the $s_i$ nearest neighbors and compute the \edits{$(i, j)$ and $(j, i)$ kernel entries (since the kernel matrix is symmetric)}. Letting $s = \max_{i}{(s_i)}$, the total complexity is roughly $\mathcal{O}(m(s+1) n \log n)$, where the $s+1$ term accounts for the initial construction cost of the $k$-$d$ tree. Typically, $s \ll n$ and this complexity is approximately $\mathcal{O}(m n \log n)$. 

We leverage symmetry to further reduce the computation cost. \edits{After computing the first row/column we only need to compute the lower right $(n-1, n-1)$ corner of the kernel matrix. When we compute row $i$, we only need to find the nearest neighbors in the subset of samples $\{\boldsymbol{x}\}_{j=i}^{n}$. Therefore, we build a sequence of $k$-$d$ trees, each using a smaller subset of samples.} At row $i$, we \edits{use the $k$-$d$ tree with the fewest number of samples} to find the nearest neighbors $\boldsymbol{x}^{(j)}$ that are within the critical radius $R$ and such that $i \leq j$. We introduce a lag parameter $L$ that determines the number of $k$-$d$ trees to construct and build a series of $k$-$d$ trees $\{T_{r}\}_{r=0}^{t}$ such that $T_{r}$ only includes the points $\{ \boldsymbol{x}^{(i)} \}_{i=1+r L}^{n}$. \edits{We prescribe the lag parameter $L$ by prescribing the number of $k$-$d$ trees $t = t(n)$ and setting $L =  \lfloor n/t \rfloor$} For each row $i$, we use the $k$-$d$ tree such that $i>r L$---using a tree that contains fewer samples improves performance. The trade-off for this improved efficiency is a larger cost for the initial $k$-$d$ tree construction. An upper bound for the cost of constructing the $t$ $k$-$d$ trees is $\mathcal{O}(t n \log n)$. We find that choosing constant $t$ or $t \propto \log n$ improves the overall efficiency of the algorithm. However, choosing $t \propto n$ makes the overall cost roughly $\mathcal{O}(m n^2 \log n)$, which is slightly worse than the brute force algorithm. Algorithm~\ref{alg:kd-tree-kernel-computation} provides the pseudo-code for the $k$-$d$ tree based kernel matrix construction, and Fig.~\ref{fig:wall-clock-time} shows the performance of this algorithm compared to the brute force approach. To our knowledge, the sequential $k$-$d$ tree construction used in Algorithm~\ref{alg:kd-tree-kernel-computation} has not been previously discussed in the literature. Yet, as Fig.~\ref{fig:wall-clock-time} shows, this technique can significantly reduce the practical runtime of the kernel matrix construction. 

\begin{algorithm}
 \begin{algorithmic}
    \State Given: the samples $\{\boldsymbol{x}^{(i)}\}_{i=1}^{n}$ and a bandwidth vector $\boldsymbol{b} \in \mathbb{R}^{n}$
    \State Initialize the sparse kernel matrix $\boldsymbol{K} \in \mathbb{R}^{n \times n}$ (assume unset entries are $0$)
    \State Let $b_{max} = \max_{i \in [1,n]}{(b_i)}$ be the maximum bandwidth 
    \State Construct the $k$-$d$ trees $\{T_{r}\}$ = \Call{Construct $k$-$d$ trees}{$\{\boldsymbol{x}^{(i)}\}_{i=1}^{n}$, $L$}
    \For{$i \gets 1$ to $n$} \Comment{Loop through the rows of the kernel matrix}
    \State Find the largest $r$ such that $i > r L$ and use $T_{r}$ to find the nearest neighbors $$\mathcal{X} = \{\boldsymbol{x}^{(j)}: i \geq j \geq n \mbox{ and } \|\boldsymbol{x}^{(i)} - \boldsymbol{x}^{(j)} \|^{2} < -\epsilon^2 b_{max}^2 \log\delta_{tol} \}$$
    \For{$J \gets 1$ to $\vert \mathcal{X} \vert$} 
    \State Let $j = j(J)$ be the index of the $J^{th}$ nearest neighbor $\boldsymbol{x}^{(j)} \in \mathcal{X}$
    \State Compute the entry $k = \exp{\left(-\frac{\|\boldsymbol{x}^{(i)} - \boldsymbol{x}^{(j)}\|^2}{\epsilon^2 b_i b_j}\right)}$ 
    \If{$k>\delta_{tol}$}
    Set the entries $K^{(ij)} = k$ and $K^{(ji)} = k$
    \EndIf
    \EndFor
    \EndFor
    \Procedure{Construct $k$-$d$ trees}{$\{\boldsymbol{x}^{(i)}\}_{i=1}^{n}$, $L$}
    \State Find the largest $t$ such that $t L < n$ and initialize the $k$-$d$ trees $\mathcal{T} = \emptyset$
    \For{$r \gets 0$ to $t$}
    \State Compute the $k$-$d$ tree $T_{r}$ using the last $n-r L$ samples $\{\boldsymbol{x}^{(i)}\}_{i=1+r L}^{n}$
    \State Update $\mathcal{T} \gets \mathcal{T} \cup \{T_{r}\}$
    \EndFor
    \State \Return The set of $k$-$d$ trees $\mathcal{T}$ 
    \EndProcedure
 \end{algorithmic}
 \caption{The $k$-$d$ tree based algorithm that, similar to Algorithm~ \ref{alg:brute-force-kernel-computation}, computes either $\boldsymbol{K}_{\epsilon}$ or $\boldsymbol{K}_{\epsilon, \beta}$. This algorithm requires three parameters: (i) the bandwidth parameter $\epsilon>0$, (ii) the sparsity tolerance $\delta_{tol}$, (iii) the lag parameter  $L$ (in the $\boldsymbol{K}_{\epsilon, \beta}$ case we additionally need to define the variable bandwidth parameter $\beta$). The complexity is typically $\mathcal{O}(n \log n)$.}
 \label{alg:kd-tree-kernel-computation}
\end{algorithm}

\begin{figure}[h!]
  \centering
  \begin{subfigure}{0.75\textwidth}
    \includegraphics[width=1.0\textwidth]{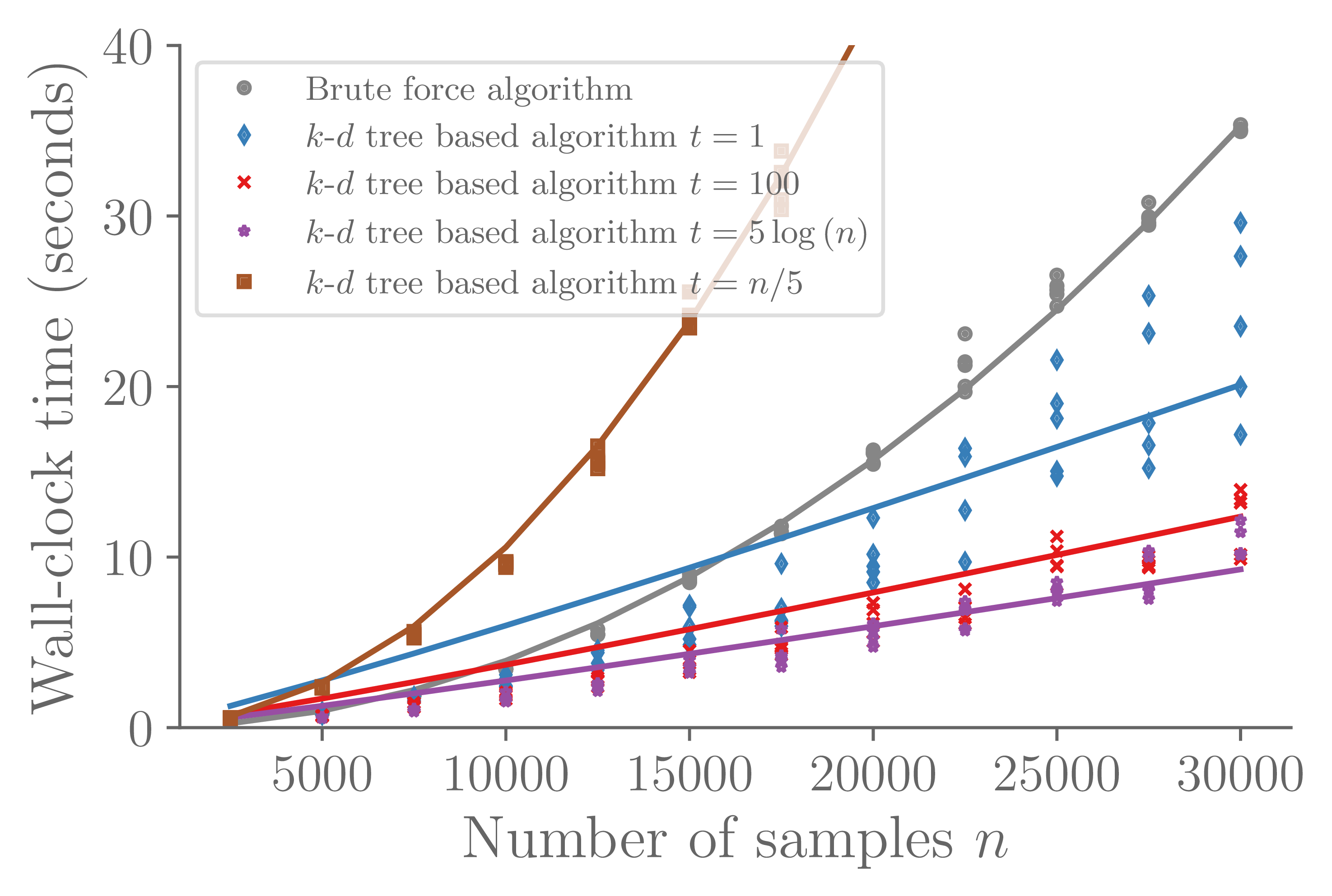}
  \end{subfigure}
  \caption{The wall-clock time as a function of the number of samples from a Gaussian distribution to construct the kernel matrix using the brute force approach in Algorithm~\ref{alg:brute-force-kernel-computation} (grey points) and the $k$-$d$ tree based algorithm in Algorithm~\ref{alg:kd-tree-kernel-computation} (colored points). We ran this on an Intel Core i7-4770 CPU at 3.40GHz. The blue and red markers represent a series of $t=1$ and $t=100$ $k$-$d$ trees---equivalently, the lag parameter is $L = n/t$. The purple and brown markers use $t = 5\log n$ $t=n/5$ and $k$-$d$ trees with lag parameters $L = n/(5\log n)$ and $L = 5$, respectively. For reference, the grey and brown lines shown are proportional to $n^2$, and the blue, red, and purple lines are proportional to $n \log n$.}
  \label{fig:wall-clock-time}
\end{figure}

The outer loop over the samples in both Algorithms~\ref{alg:brute-force-kernel-computation}~and~\ref{alg:kd-tree-kernel-computation} is trivially parallelizable. Given $n_{proc}$ processors, the brute-force algorithm has approximately $\mathcal{O}(m n^2 / n_{proc})$ complexity. Initially constructing the $k$-$d$ tree still costs $\mathcal{O}(mn \log n)$, and this task is not easily parallelizable. After construction, the $k$-$d$ tree based algorithm has approximately $\mathcal{O}(m n/n_{proc} \log n)$ complexity. Therefore, for sufficiently large $n_{proc}$ the ``brute force'' algorithm may in principle outperform the $k$-$d$ tree based algorithm. In most applications, however, we choose $n \gg n_{proc}$ and find the $k$-$d$ approach to be more efficient. Additionally, efficiently computing the bandwidth parameter in~\eqref{eq:bandwdith-parameter} typically requires computing the $k$-$d$ trees anyway.

The parameter tuning described in Section~\ref{sec:optimal-bandwidth} is also computationally expensive. The optimization problem---maximizing~\eqref{eq:bandwidth-sensitivity}---is relatively straightforward. However, computing the cost function requires repeatedly constructing the kernel matrix. We use Algorithm~\ref{alg:kd-tree-kernel-computation} to efficiently compute the cost function and leverage gradient-free algorithms in \texttt{NLopt} \cite{nlopt} to numerically solve the one dimensional optimization problem.

\section{Examples} \label{sec:examples}

We illustrate our approach using three examples: (i) a toy problem using Gaussian samples (Section~\ref{sec:Gaussian-toy-example}), (ii) solving the Kolmogorov problem on a spherical manifold (Section~\ref{sec:spherical-manifold-example}), and (iii) an application that uses our method to evolve samples from a probability distribution according to an advection equation (Section \ref{sec:advection-example}).

\subsection{Gaussian examples} \label{sec:Gaussian-toy-example}

We use a two-dimensional Gaussian toy problem to assess the convergence of the eigenfunction estimates, \edits{and then solve the Kolmogorov problem using samples from a four-dimensional Gaussian distribution to demonstrate our method in higher dimensions}. We sample $n$ points $\{\boldsymbol{x}^{(i)}\}_{i=1}^{n}$ from a Gaussian distribution $\mathcal{N}(\boldsymbol{0}, \edits{\boldsymbol{\Sigma}})$ on $\Omega = \mathbb R^{m}$ \edits{with $m=2$ or $m=4$.} \edits{In $m=2$ dimensions, we use $\boldsymbol{\Sigma} = \boldsymbol{I}$. In $m=4$ dimensions, we set $\bm\Sigma=\diag(\sqrt{2}, \sqrt{2}, \sqrt{3}, \sqrt{3})$. Note that in these examples we have $\Omega = \mathbb R^m$ since the Gaussian distribution is supported on the entire Euclidean space, and thus the intrinsic and extrinsic dimensions are equal, $m=d$.}  

\edits{Using the samples $\boldsymbol{x}^{(i)}$,} we estimate the density function (Figure~\ref{fig:density-estimation}) and the eigendecomposition of the Kolmogorov operator (\edits{Figs.~\ref{fig:laplace-operator-eigenvalues} and~\ref{fig:standard-Gaussian-error-analysis}}) as described in Section~\ref{sec:Kolmogorov-problem}. We then use these samples to estimate solutions to~\eqref{eq:Kolmogorov-problem} (Figure~\ref{fig:weighted-laplace-solution} \edits{and~\ref{fig:4d-example}}) using the method described in Section~\ref{sec:solution-gradients}. \edits{In the two-dimensional case where we can readily perform visualizations, we also show the estimated gradient vector field (Figure~\ref{fig:weighted-laplace-solution}).} 

\begin{figure}[h!]
  \centering
  \begin{subfigure}{0.45\textwidth}
    \includegraphics[width=1.0\textwidth]{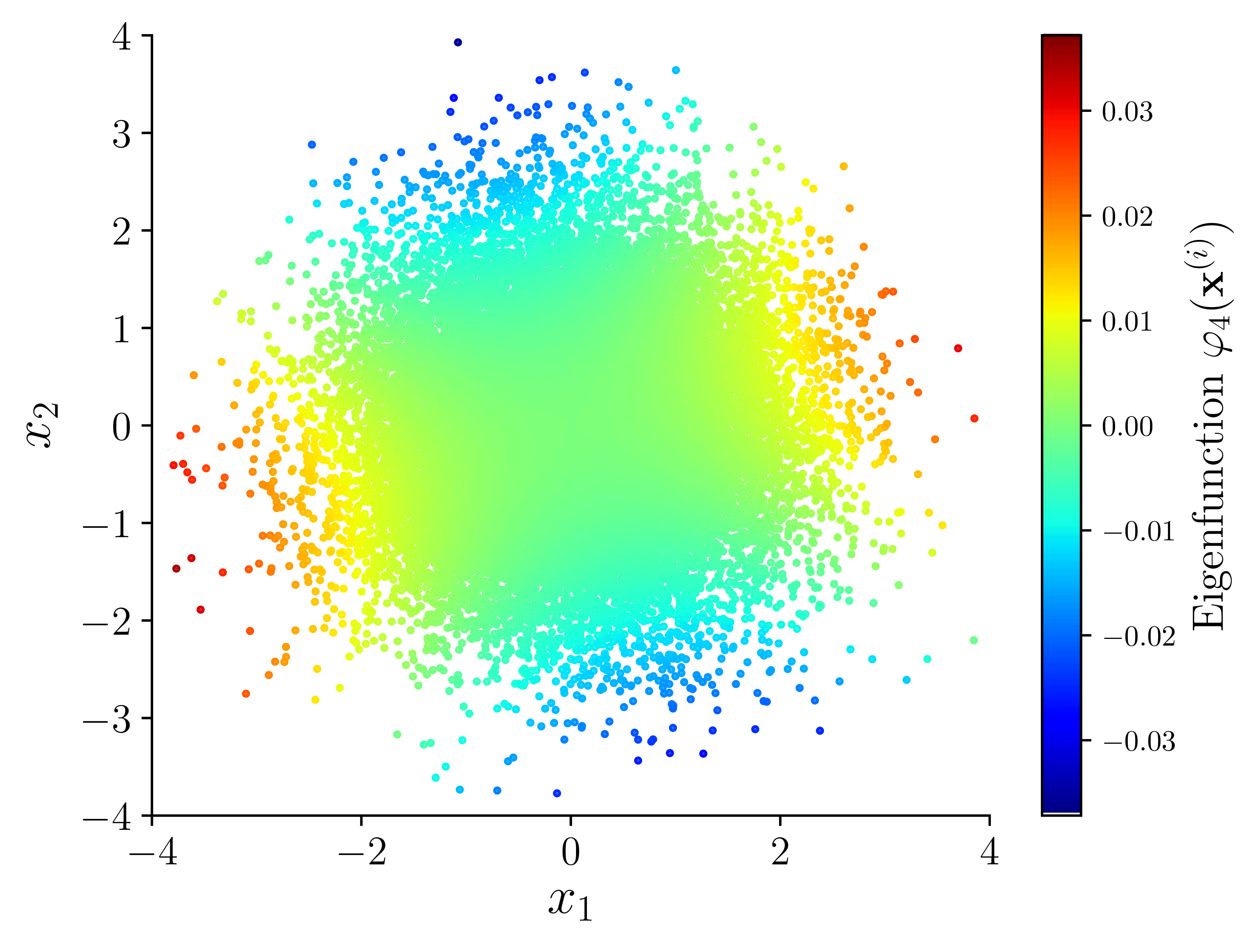}
    \caption{Exact eigenfunction $\varphi_4$ of $\mathcal L_{\psi, c}$}
    \label{fig:fourth-eigenvector-analytic}
  \end{subfigure}
  \begin{subfigure}{0.45\textwidth}
    \includegraphics[width=1.0\textwidth]{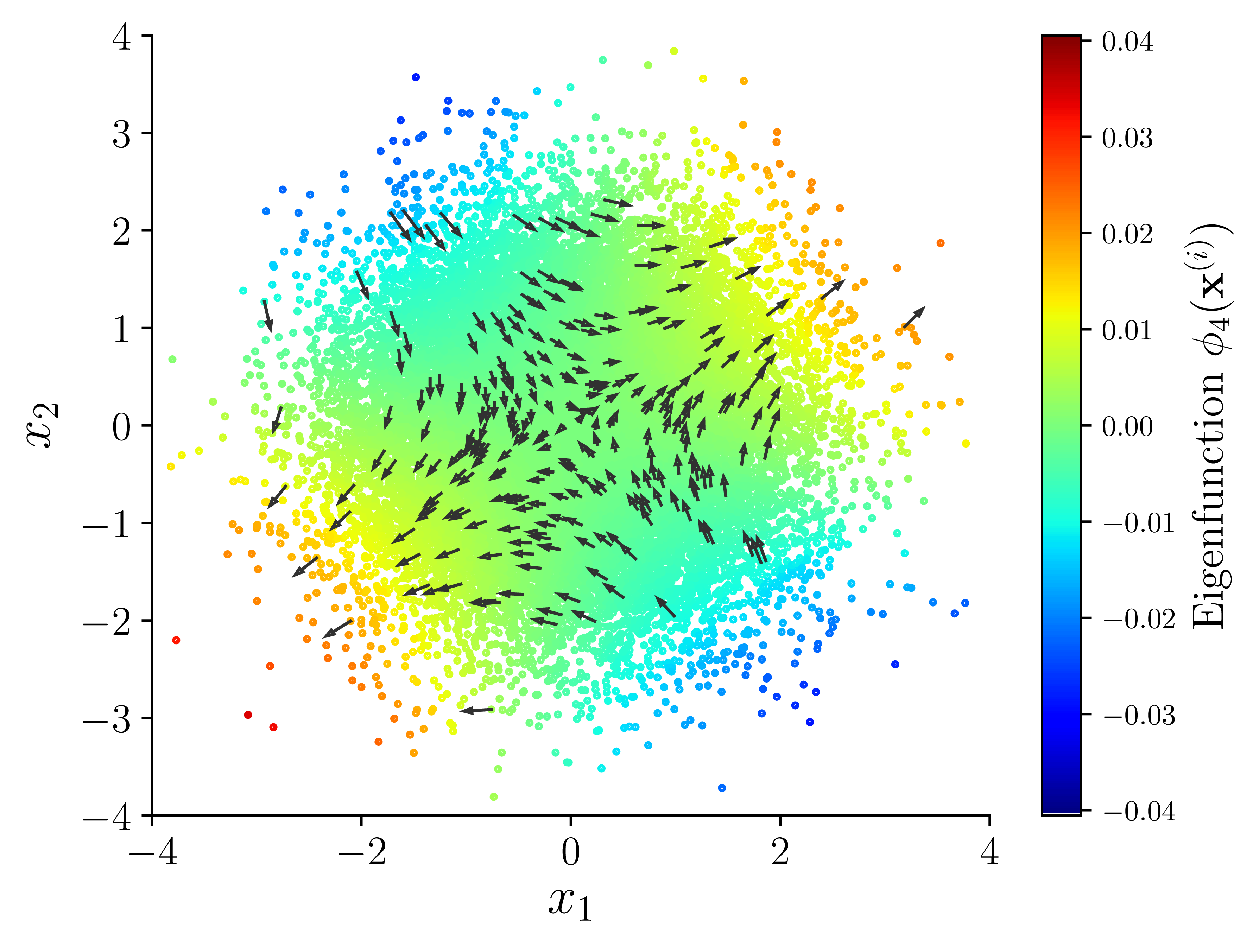}
    \caption{Computed eigenfunction $\phi_4$ of $\mathcal L_{\psi, c}$}
    \label{fig:fourth-eigenvector-computed}
  \end{subfigure}
  \begin{subfigure}{0.45\textwidth}
    \includegraphics[width=1.0\textwidth]{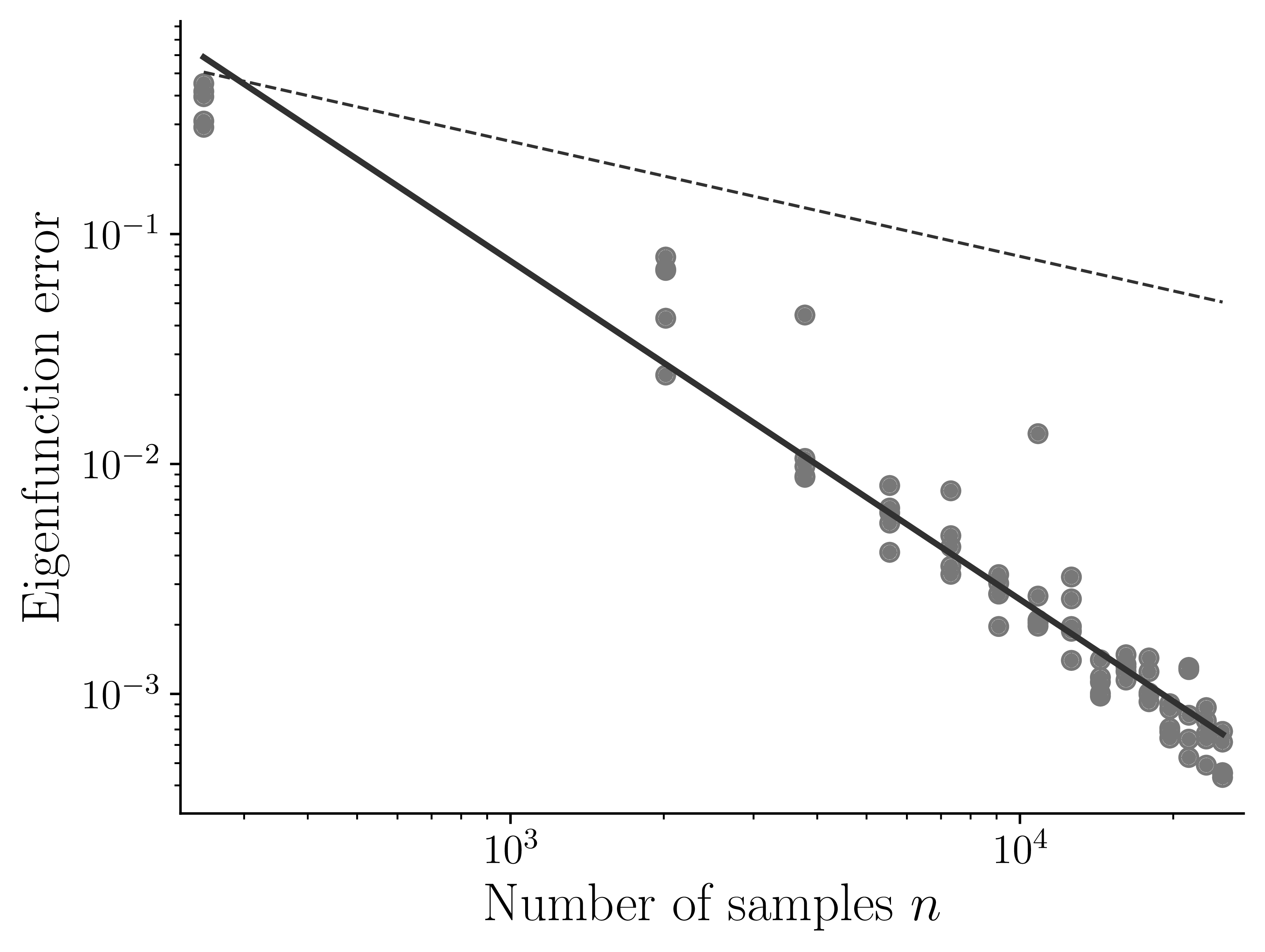}
    \caption{Expected eigenfunction error}
    \label{fig:fourth-eigenvector-expected-error}
  \end{subfigure}
  \begin{subfigure}{0.45\textwidth}
    \includegraphics[width=1.0\textwidth]{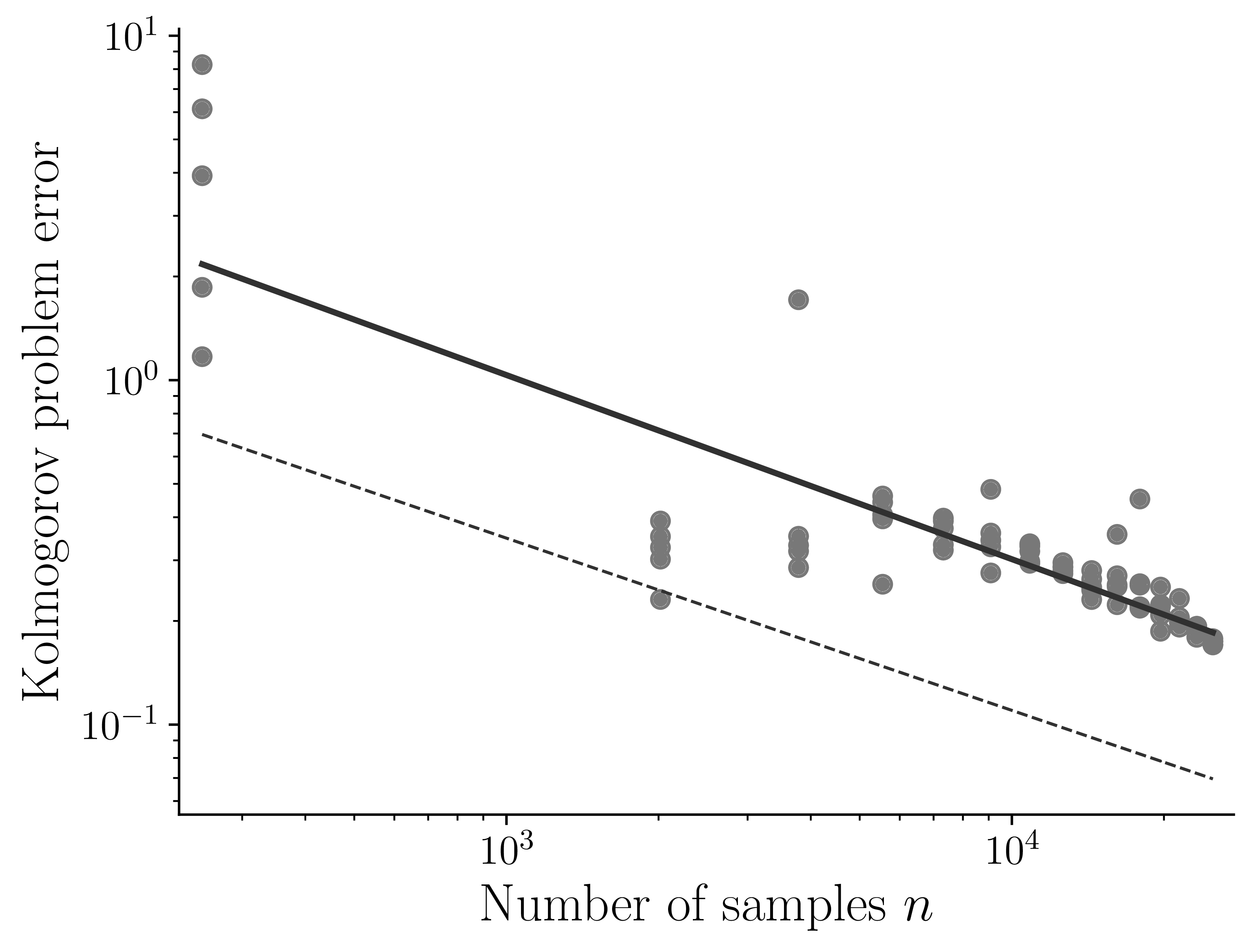}
    \caption{Expected solution error}
    \label{fig:solution-expected-error}
  \end{subfigure}
  \caption{\edits{Analytical and numerical eigenfunctions of the Kolmogorov operator for a Gaussian distribution $\psi=\mathcal{N}(\boldsymbol{0}, \boldsymbol{I})$ in $m=2$ dimensions.} \edits{Panel~(a) shows an analytical eigenfunction $\varphi_4(\boldsymbol{x})$ of $\mathcal L_{1,\psi}$ corresponding to the twofold-degenerate eigenvalue $\lambda_j = -2$, evaluated at each of the $n=2.5 \times 10^{4}$ samples $\bm x^{(i)}$. The analytical eigenfunction is expressed as a linear combination $ \varphi_4 = R_{11} \phi_4 + R_{12} \phi_5$, where $R_{ij}$ are elements of the rotation matrix $\bm R(\theta^*)$ that minimizes the empirical error in~\eqref{eq:expected-eigenfunction-error}. Panel~(b) shows the numerical eigenvector $\bm \phi_4 $ of $\bm L_{\psi, c}$ (colors) and the corresponding gradient (arrows). The gradient is computed using the procedure in Section~\ref{sec:solution}. Panel~(c) shows the empirical error between $\bm\varphi_4$ (the vector containing the values $\varphi_4(\bm x^{(i)})$) and $\bm \phi_4$, computed via~\eqref{eq:expected-eigenfunction-error}, as a function on the number of samples $n$. In Panel~(d), we prescribe the non-rotated analytical eigenfunction $ \phi_4$ as the right hand side $g$ of the Poisson problem $\mathcal L_{\psi,1} f = g$, and compute the empirical error of the numerical solution $ \bm f$ relative to the analytical solution $ f = g / \lambda_4 $ as a function of $n$, using~\eqref{eqNormalizedError}. In Panels~(c) and~(d) we compute each error value using independent datasets of $n$ samples $\{\boldsymbol{x}^{(i)}\}_{i=1}^{n}$ drawn from $\mathcal{N}(\boldsymbol{0}, \boldsymbol{I})$. Solid lines show the best-fit empirical error---the empirical error averaged over $5$ experiments with $n$ samples. Dashed lines show an $n^{-1/2}$ convergence rate for reference.}}  
  \label{fig:standard-Gaussian-error-analysis}
\end{figure}

\subsubsection{\label{sec2DGauss}Dimension $m=2$}

\edits{Starting with the two-dimensional example,} we assess the validity of our algorithms by comparing our numerically computed eigenfunctions to an analytical solution. Figures~\ref{fig:computed-fourth-eigenvector}~and~\ref{fig:fourth-eigenvector-computed} show the computed $4^{th}$ eigenvector $\bm \phi_4$ \edits{using $n=2.5 \times 10^4$ samples}. Recall that the $i$-th component of $ \bm \phi_4$, i.e., $\phi_4^{(i)}$, is an estimate of the eigenfunction value $\phi_4(\boldsymbol{x}^{(i)})$. In this case, standard results show that the $4^{th}$ and $5^{th}$ eigenfunctions are $\phi_4(\boldsymbol{x}) = x_1 x_2 / \sqrt{2 \pi}$ and $\phi_5(\boldsymbol{x}) = (x_1^2-x_2^2) / \sqrt{2 \pi}$ and that they correspond to an eigenvalue with multiplicity $2$ \cite{couranthilbert2008}. Let $\boldsymbol{\varphi^{\prime}}_4$ and $\boldsymbol{\varphi^{\prime}}_5$ be the vector whose $i^{th}$ component are the true eigenfunctions evaluated at the $i^{th}$ sample. We compare the eigenspace spanned by these vectors to the space spanned by the $4^{th}$ and $5^{th}$ eigenvectors computed by our algorithm ($\boldsymbol{\phi}_4$ and $\boldsymbol{\phi}_5$, respectively). We normalize the eigenvectors $\boldsymbol{\varphi^{\prime}}_4$, $\boldsymbol{\varphi^{\prime}}_5$, $\boldsymbol{\phi}_4$, and $\boldsymbol{\phi}_5$ \edits{to be unit vectors in the norm $\|\boldsymbol{\phi}\|^2_{\boldsymbol{S}} = \langle \boldsymbol{\phi}, \boldsymbol{\phi} \rangle_{\boldsymbol{S}}$ (see~\eqref{eqInnerProd})}. Let $\boldsymbol{R}(\theta)$ be a $2 \times 2$ rotation matrix that rotates vectors by $\theta \in [0, 2\pi]$. Let 
\begin{equation*}
    \left[ \begin{array}{ccc}
    \horzbar & \boldsymbol{\varphi}_4^\top & \horzbar \\
    \horzbar & \boldsymbol{\varphi}_5^\top & \horzbar 
    \end{array} \right] = \boldsymbol{R}(\theta^{*}) \left[ \begin{array}{ccc}
    \horzbar & \boldsymbol{\varphi^{\prime}}_4^\top & \horzbar \\
    \horzbar & \boldsymbol{\varphi^{\prime}}_5^\top & \horzbar 
    \end{array} \right],
\end{equation*}
where $\theta^{*}$ is the angle that minimizes the empirical squared error 
\begin{equation}
    e^2(\{\boldsymbol{x}\}_{i=1}^{n}) = \frac{1}{n} (\boldsymbol{\phi}_4 - \boldsymbol{\varphi}_4) \cdot (\boldsymbol{\phi}_4 - \boldsymbol{\varphi}_4). 
    \label{eq:expected-eigenfunction-error}
\end{equation}
Figure~\ref{fig:fourth-eigenvector-analytic} shows the analytical and numerical $4^{th}$ eigenvector. Figure~\ref{fig:fourth-eigenvector-expected-error} shows the expected squared error as a function of $n$. Even though, to our knowledge, there are no spectral convergence results in the literature for the class of variable-bandwidth kernels in \cite{berryharlim2016}, we observe a convergence rate that significantly exceeds the typical $1/\sqrt{n}$ Monte Carlo convergence rate.

\edits{Next, observe that given a diagonal covariance matrix $\bm\Sigma = [\Sigma_{ij}]$ we have
\begin{equation}
    \label{eqLXi}
    \mathcal{L}_{1\psi,1} x_i = x_i \nabla_i \log{\psi} = - \Sigma_{ii}^{-1} x_i
\end{equation}
and, therefore, $\phi(\bm{x}) = x_i$ is an eigenfunction with eigenvalue $\lambda= \Sigma_{ii}^{-1}$. Moreover, the multiplicity of $\lambda$ is equal to the number of times that the value $\Sigma_{ii}$ appears in the diagonal entries of $\bm\Sigma$. Prescribing $ g = \phi$ as the right hand side of the Poisson problem $\mathcal L_{\psi,1} f = g$, it follows that the solution $ f= g / \lambda$ is expressible as a linear combination of eigenfunctions $ \phi_j$ corresponding to eigenvalues $\lambda_j = \lambda$. In the discrete case, we expect that the solution vector $ \bm f$ of the Poisson problem associated with $\bm L_{\psi,c} $ for the source vector $\bm g = [g(\bm x^{(i)})]$ (see~\eqref{eqFLS}) to well-approximate the analytical solution $ f$ for modest values of the spectral truncation parameter $\ell$.}

\edits{Let $ \bm{\check f} = [f(\bm x^{(i)}) ]$ be the vector whose entries are given by the true solution values. In Figure~\ref{fig:standard-Gaussian-error-analysis}(d), we compute the normalized empirical squared error of $\bm f $ relative to $ \bm{\check f} $, 
\begin{equation}
    \label{eqNormalizedError}
    \tilde e^2(\{\boldsymbol{x}\}_{i=1}^{n}) = \frac{(\boldsymbol f - \boldsymbol{\check f}) \cdot (\boldsymbol f - \boldsymbol{\check f})}{ \bm{\check f}\cdot \bm{\check f}}, 
\end{equation}
as a function of the number of samples $n$. We observe an approximate $O(n^{-1/2})$ convergence rate. 
}

\subsubsection{Dimension $m=4$}

\edits{We now demonstrate our method in $m=4$ dimensions using $n = 10^4$ samples $\{\boldsymbol{x}^{(i)}\}_{i=1}^{n}$ from a Gaussian distribution $\psi(\boldsymbol{x}) = \mathcal{N}(\boldsymbol{x}; \boldsymbol{0}, \boldsymbol{\Sigma})$. We recall that $\bm \Sigma = \diag(\sqrt{2}, \sqrt{2}, \sqrt{3}, \sqrt{3})$. Thus, according to~\eqref{eqLXi}, the leading two nonzero eigenvalues of $\mathcal L_{\psi,1}$ are $-1/\sqrt 3$ and $-1/\sqrt 2$, and both of these eigenvalues have multiplicity 2.}

\edits{First, we use the procedure in Section~\ref{sec:density-estimation} to estimate the $4$-dimensional Gaussian density (see the diagonal panels of Figure~\ref{fig:4d-solution}). Note that we estimate the $4$-dimensional joint density $\psi(\bm x)$ and these plots project this field onto each coordinate; we are visualizing the joint density, not the marginal. We then prescribe the source function $g(\boldsymbol{x}) = x_1 + x_3$ and solve the Kolmogorov problem $\mathcal{L}_{\psi, 1} f = g$ for the solution $f$. In this case, $f(\bm{x}) = -\sqrt{2} x_1 - \sqrt{3} x_3$ is the exact solution since $x_1$ and $x_3$ are eigenfunctions with eigenvalues $\lambda_1 = - 1/\sqrt 3$ and $\lambda_3 = - 1/\sqrt 2$, respectively.}  

\edits{In the upper-triangular panels of Figure~\ref{fig:4d-solution} we plot various two-dimensional projections of the solution $f( \bm x^{(i)} )$ on the samples. Specifically, the upper-triangular panel in the $j^{th}$ row and $k^{th}$ column (measured from the top left) shows a scatterplot of the $j^{th}$ and $k^{th}$ components of $\bm x^{(i)}$ (i.e., the points $(x^{(i)}_j, x^{(i)}_k) \in \mathbb R^2 $) colored by the value of the solution (i.e., the $i$-th component of the exact solution vector $ \bm{\check f} = [f(\bm x^{(i)})]$). Note that since $ f$ is a function of $x_1$ and $x_3$ only, the panel corresponding to $j=1$ and $k=3$ shows a scatterplot of a well-defined function, $(x_1^{(i)}, x_3^{(i)}) \mapsto f(\bm x) $; thus, we see a smooth progression of colors. The projections in the other upper-triangular panels do not correspond to functions of the coordinates, so the colors appear noisy, but nonetheless convey useful information about the structure of the solution in space.} 

\edits{In our numerical experiments, we use our $k$-$d$ tree implementation from Section~\ref{sec:implementation} to efficiently compute the discrete approximation of the Kolmogorov operator $\boldsymbol{L}_{\psi, 1}$. We compute the empirical solution $\bm f$ via~\eqref{eqFLS} using $\ell = 100$ eigenfunctions. Since we do not compute gradients of the solution in this case, we only need to 
solve the sparse least-squares system $\boldsymbol{L}_{\psi, 1} \boldsymbol{f} = \boldsymbol{g}$ to estimate solutions to the differential equation in $4$ dimensions. However, in this example we solve the least-squares problem using the eigendecomposition of $\boldsymbol{L}_{\psi, 1}$. In the lower-triangular panels of Figure~\ref{fig:4d-solution} we plot projections of the empirical solution $\bm f$ analogously to the exact solution $\bm{\check f}$ in the upper-triangular panels. Despite the modest number of samples for the dimensionality of the problem, we qualitatively observe that our computed solution matches the expected result. Still, some differences are present, notably a bias of the empirical solution towards larger absolute values (as evidenced by the stronger colors in the lower-triangular panels). As we will see momentarily, this bias can be attributed to errors in the eigenvalues of the discrete Kolmogorov operator.   
}

\begin{figure}
  \centering
  \begin{subfigure}{0.8\textwidth}
    \includegraphics[width=1.0\textwidth]{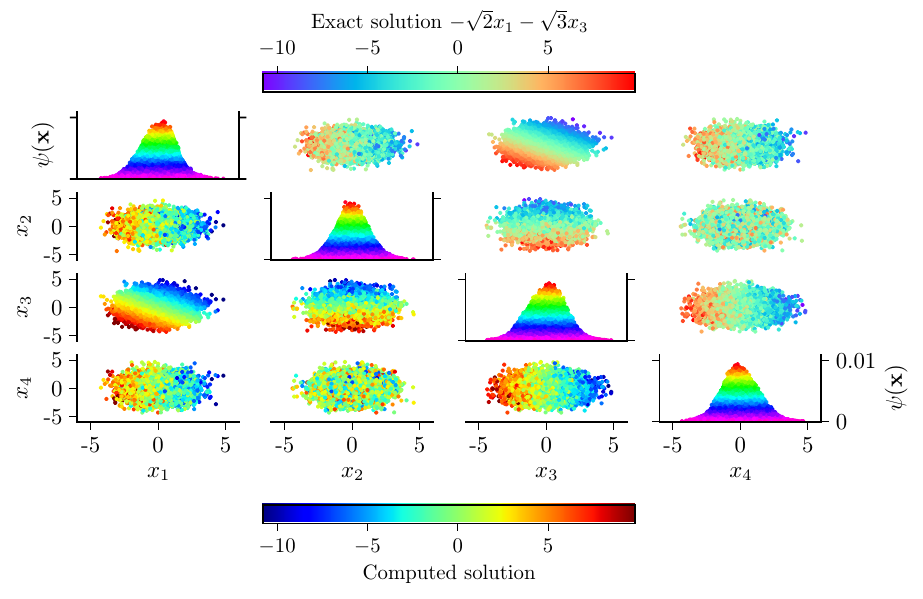}
    \caption{Density estimate and solution}
    \label{fig:4d-solution}
  \end{subfigure}
  \begin{subfigure}{0.475\textwidth}
    \includegraphics[width=1.0\textwidth]{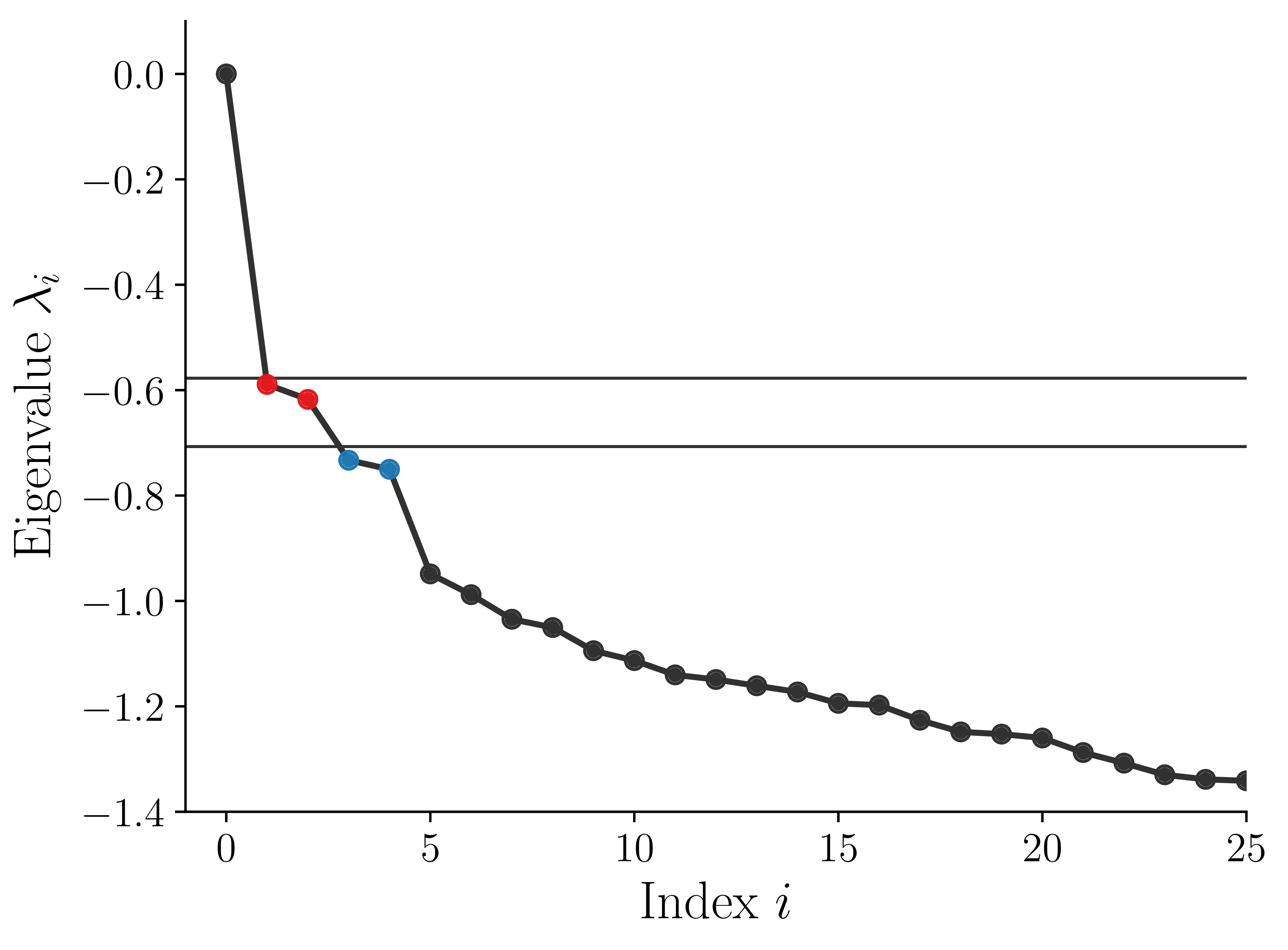}
    \caption{Eigenvalues of $\bm{L}_{\psi, 1} = \boldsymbol{Q}_{\ell} \bm{\Lambda} \boldsymbol{Q}_{\ell}^{-1}$}
    \label{fig:4d-eigenvalues}
  \end{subfigure}
  \begin{subfigure}{0.475\textwidth}
    \includegraphics[width=1.0\textwidth]{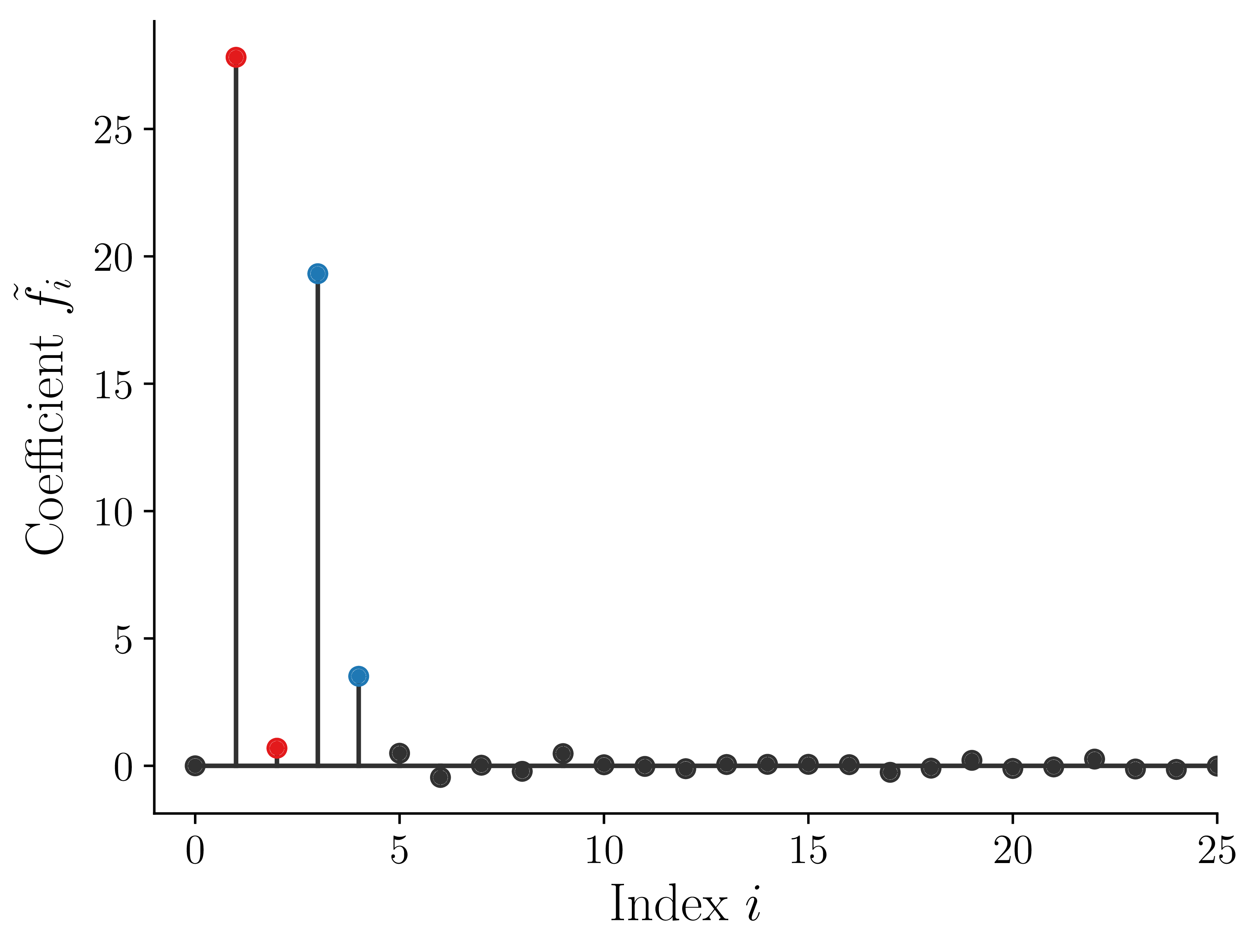}
    \caption{Solution coefficients $\bm{\tilde{f}} = \boldsymbol{Q}_{\ell}^{-1} \bm{f}$}
    \label{fig:4d-coefficients}
  \end{subfigure}
  \caption{\edits{Solution of the Kolmogorov equation in $\mathbb R^4$ using $n=10^4$ samples from a Gaussian distribution $\psi$ and linear right hand side $g$, using $n=10^4$. In Panel~(a), the diagonal subfigures show $1$-dimensional projections of our estimate of the Gaussian density $\psi(\boldsymbol{x}) = \mathcal{N}(\boldsymbol{x}; \boldsymbol{0}, \boldsymbol{\Sigma})$. The $j^{th}$ diagonal entry shows a scatterplot of $(\boldsymbol{x}_j^{(i)}, \psi(\boldsymbol{x}^{(i)}))$; the color scheme of the diagonal plots corresponds to the magnitude of the $y$-axis. Note that $\psi(\boldsymbol{x}^{(i)})$ is an estimate of the joint density at the $i^{th}$ sample, not the marginal. The off-diagonal subfigures show two-dimensional projections of the solution. The $(j,k)^{th}$ panel in the upper-triangular portion (measured from the top left) shows a scatterplot of the coordinates $(x_k^{(i)},x_j^{(i)})$ of the samples, colored by the value of the solution,  $f(\bm x^{(i)}) =-\sqrt{2} x_1^{(i)} - \sqrt{3} x_3^{(i)}$. The subfigures in the lower-triangular portion show corresponding scatterplots for the numerical solution $\bm f$ obtained via~\eqref{eqFLS}. Panel~(b) shows the first $25$ eigenvalues of the discrete Kolmogorov operator $\bm{L}_{\psi, 1}$. We expect $-1/\sqrt{2}$ and $-1/\sqrt{3}$ to be eigenvalues with multiplicity $2$; the two horizontal lines in Panel~(b) indicate these values. The blue and red dots indicate the two closest eigenvalues to $-1/\sqrt{2}$ and $-1/\sqrt{3}$, respectively. Panel~(c) shows the coefficients $\bm{\tilde{f}} = \boldsymbol{Q}_{\ell}^{-1} \bm{f}$ representing the solution as a linear combination of eigenvectors of the discrete Kolmogorov operator. We only expect the coefficients corresponding to eigenvalues $-1/\sqrt{2}$ and $-1/\sqrt{3}$ to be nonzero. The blue and red dots indicate the coefficients corresponding to the two closest eigenvalues to $-1/\sqrt{2}$ and $-1/\sqrt{3}$, respectively.}}
  \label{fig:4d-example}
\end{figure}

\edits{
We further assess the quality of our numerical solution by investigating the computed eigenspectrum of the discrete Kolmogorov operator $\bm{L}_{\psi, 1}$. Figure~\ref{fig:4d-eigenvalues} shows the eigenvalues $\hat \lambda_i $ of $\bm{L}_{\psi, 1} = \boldsymbol{Q}_{\ell} \bm{\Lambda} \boldsymbol{Q}_{\ell}^{-1}$ and Figure~\ref{fig:4d-coefficients} shows the solution coefficients $\bm{\tilde{f}} = \boldsymbol{Q}_{\ell}^{-1} \bm{f}$. Since the covariance matrix is diagonal with entries $\diag{(\boldsymbol{\Sigma})} = (\sqrt{2}, \sqrt{2}, \sqrt{3}, \sqrt{3})$, we expect $-1/\sqrt{2}$ and $-1/\sqrt{3}$ to be eigenvalues with multiplicity $2$. Figure~\ref{fig:4d-eigenvalues} clearly shows the two pairs of eigenvalues, $(\hat \lambda_1, \hat \lambda_2)$ and $(\hat \lambda_3, \hat \lambda_4)$ near each of these expected eigenvalues, respectively. This is also reflected in the coefficients representing the linear expansion of the solution in the eigenbasis, which are displayed in Figure~\ref{fig:4d-coefficients}. The nonzero coefficients clearly correspond to the $2$-dimensional eigenspaces with eigenvalues $-1/\sqrt{2}$ and $-1/\sqrt{3}$. However, both pairs of numerical eigenvalues have somewhat larger values than the true values (compare the colored dots with the horizontal lines in Figure~\ref{fig:4d-eigenvalues}). Upon solution of the least-squares problem to obtain $\bm{\tilde f}$ (which employs the reciprocals of the eigenvalues $\hat \lambda_i$; see~\eqref{eqLSMinimizer}), these errors lead to the amplitude bias of the solution mentioned above.} 

\edits{As one might expect from the higher dimensionality of this problem, we observed higher sensitivity of our results to the bandwidth parameter $\epsilon$ and the number of data points $n$ compared to the two-dimensional case in Section~\ref{sec2DGauss}. We use the tuning procedure in Section~\ref{sec:optimal-bandwidth} to choose the bandwidth parameter. In the example shown in Figure~\ref{fig:4d-example}, the estimated dimension was approximately $3.94 \approx 2 \max{(\chi^{\prime}_{\epsilon})}$ and $3.91 \approx 2 \max{(\chi^{\prime}_{\epsilon, \beta})}$ for the density and Kolmogorov operator estimation, respectively. We expect both values to be $4$, which suggests that the tuning procedure is performing adequately.}

\edits{Although the discrete Kolmogorov operator $\bm{L}_{\psi, 1}$ is sparse, storing its eigendecomposition becomes increasingly memory-intensive as $n $ increases. With our available resources, we found 
this is the limiting factor that prevents us from using more samples to estimate the solution to the Kolmogorov problem. However, if we do not need to estimate gradients $\nabla f$, we could solve $\bm{L}_{\psi, 1} \bm{f} = \bm{g}$ using a sparse matrix decomposition. Optimistically, we find the pointwise values of the solution in Figure~\ref{fig:4d-solution} are less sensitive to the number of samples $n$ than the estimate of the eigendecomposition. This suggests that the $L^2$ error in the computed solution may be less affected by the curse of dimensionality than the spectral convergence required by the eigenfunctions.
}

\subsection{Spherical manifold} \label{sec:spherical-manifold-example}

\begin{figure}
  \centering
  \begin{subfigure}{0.45\textwidth}
    \includegraphics[width=1.0\textwidth]{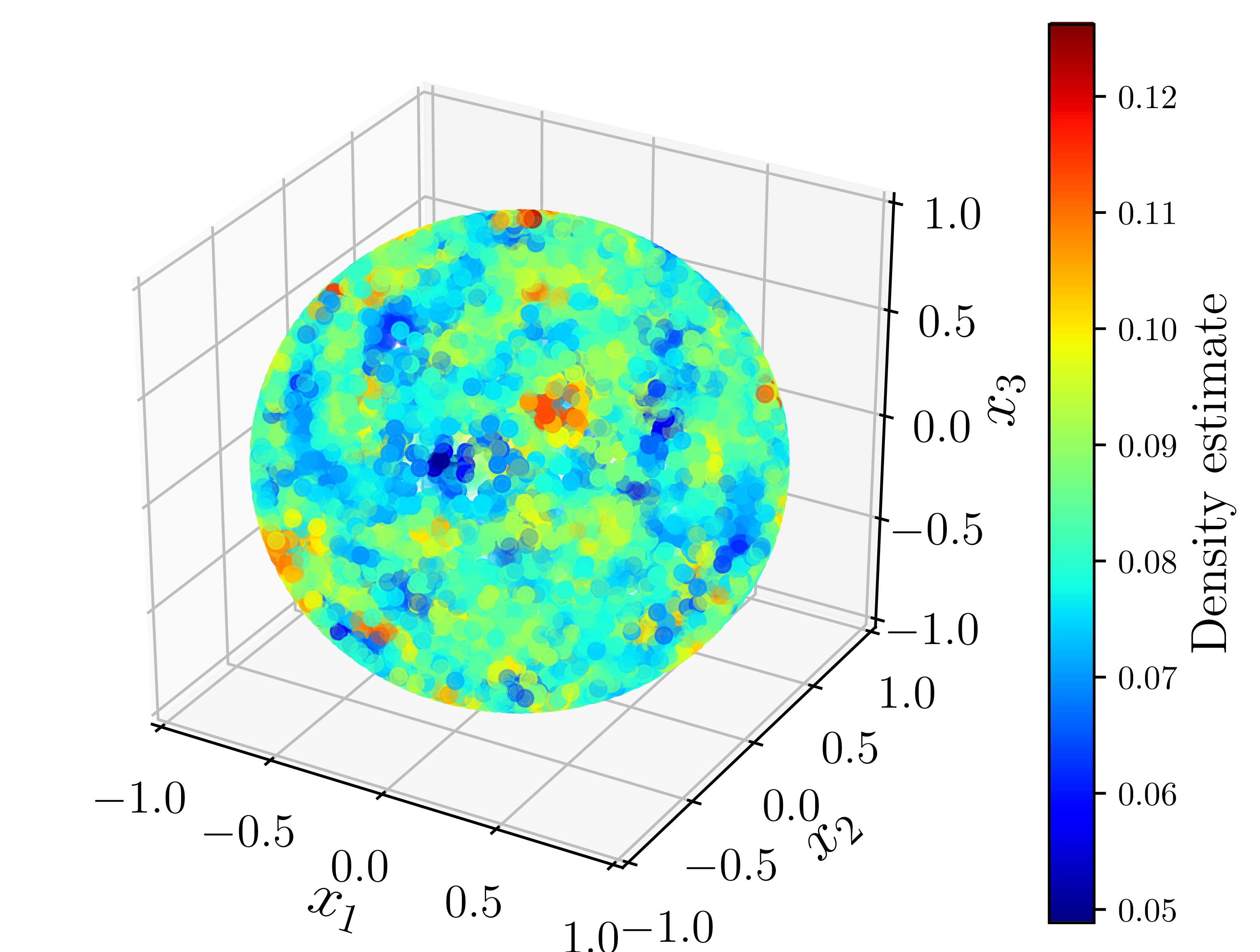}
    \caption{Estimated uniform density}
    \label{fig:SphericalManifold-UniformDensityEstimate}
  \end{subfigure}
  \begin{subfigure}{0.45\textwidth}
    \includegraphics[width=1.0\textwidth]{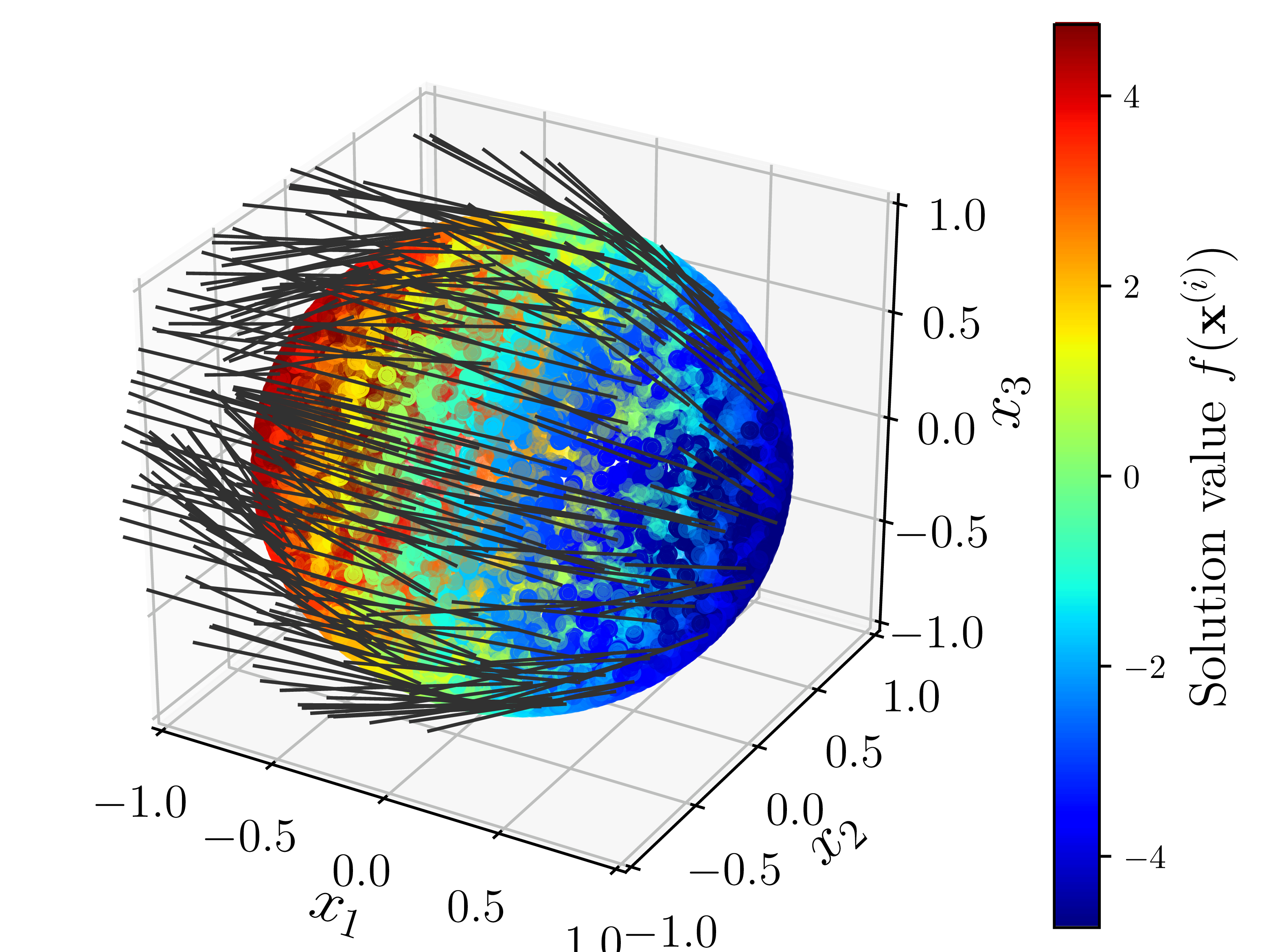}
    \caption{$c=1$}
    \label{fig:SphericalManifold-UniformDensitySolution_c1}
  \end{subfigure}
  \begin{subfigure}{0.45\textwidth}
    \includegraphics[width=1.0\textwidth]{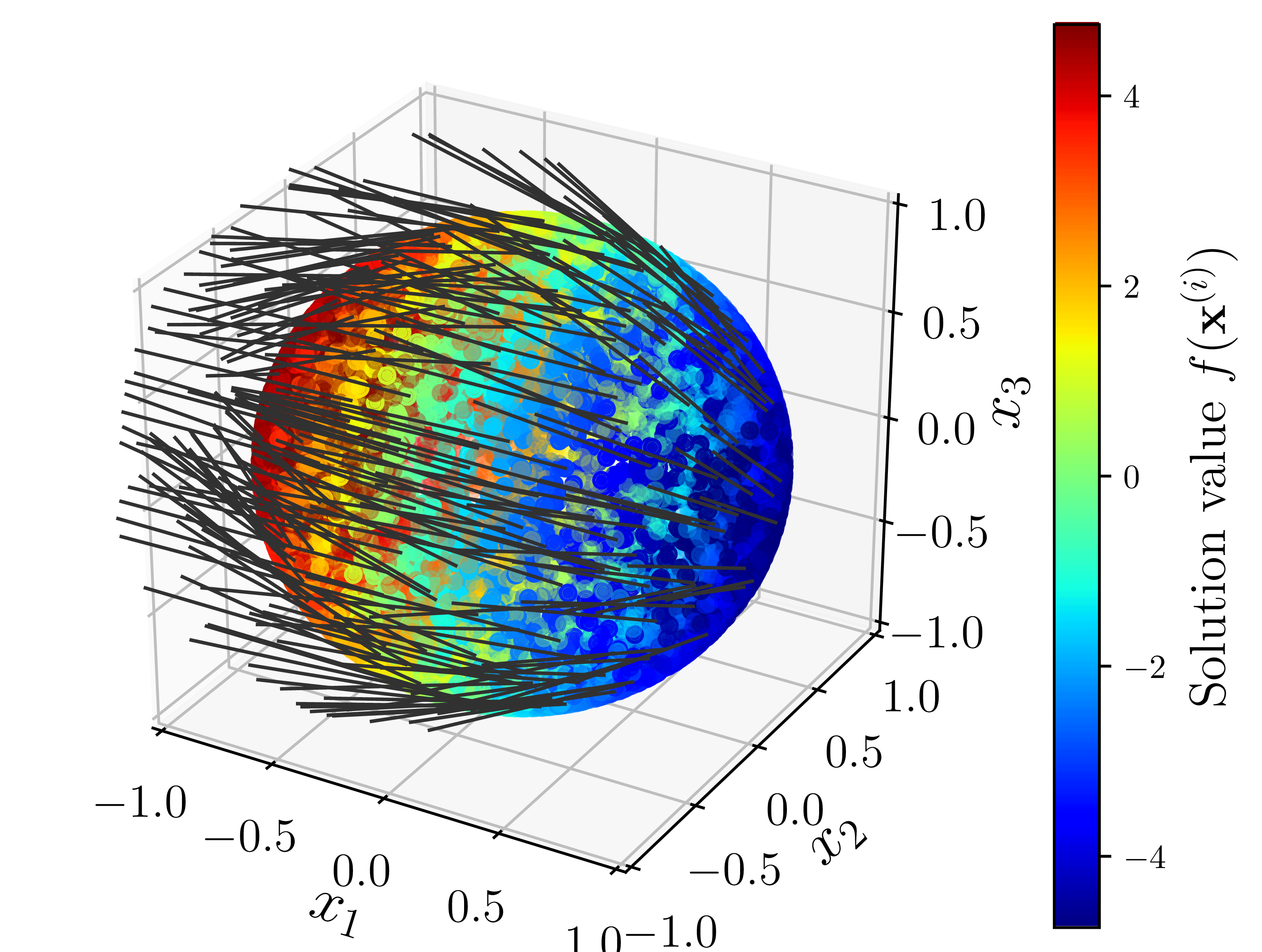}
    \caption{$c=2$}
    \label{fig:SphericalManifold-UniformDensitySolution_c2}
  \end{subfigure}
  \begin{subfigure}{0.45\textwidth}
    \includegraphics[width=1.0\textwidth]{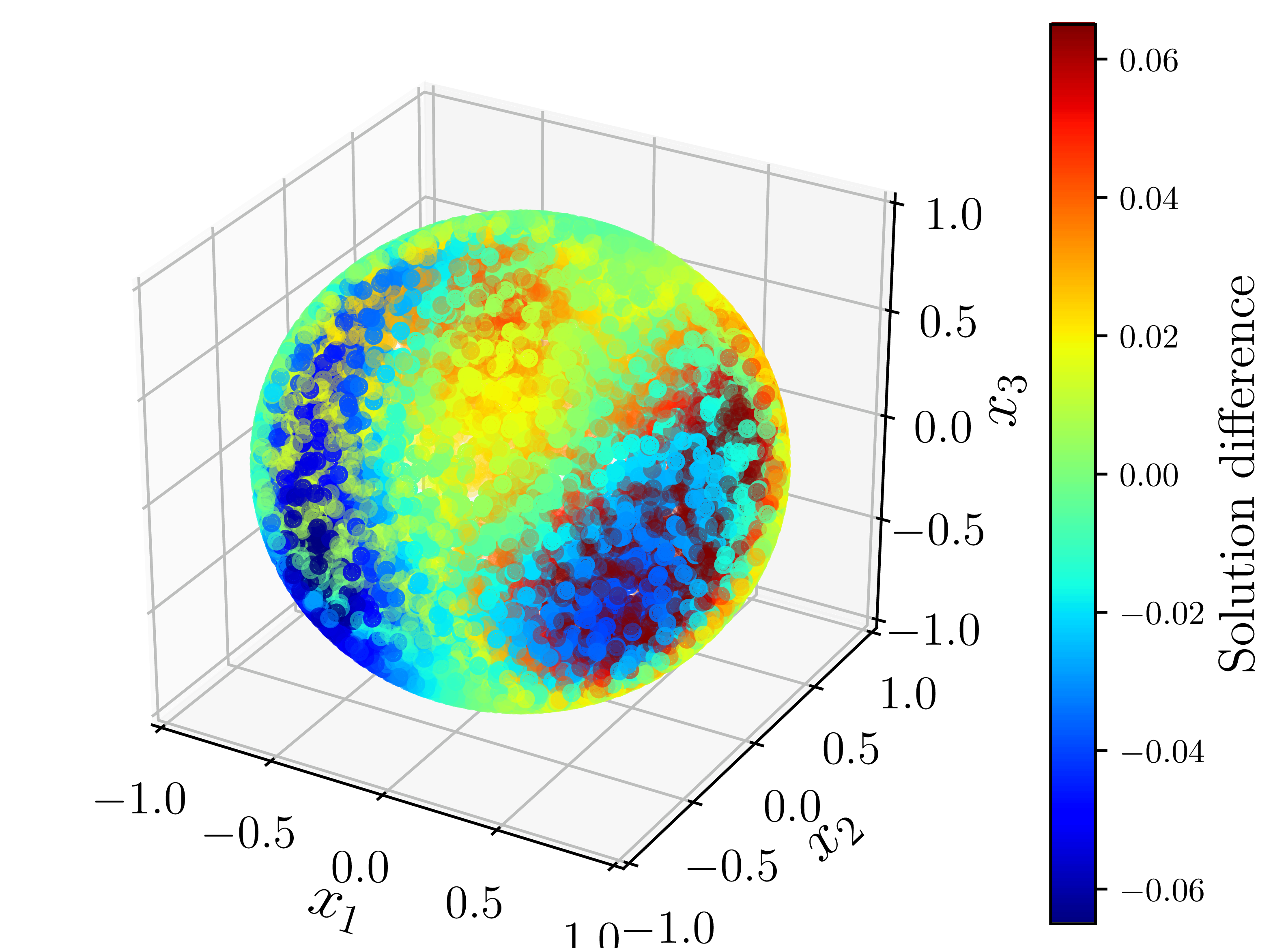}
    \caption{Difference ($c=1$ and $c=2$)}
    \label{fig:SphericalManifold-UniformDensityDifference_c0c1}
  \end{subfigure}
  \begin{subfigure}{0.45\textwidth}
    \includegraphics[width=1.0\textwidth]{fig_SphericalManifold_UniformSolution_c2.png}
    \caption{$c=0$}
    \label{fig:SphericalManifold-UniformDensitySolution_c0}
  \end{subfigure}
  \begin{subfigure}{0.45\textwidth}
    \includegraphics[width=1.0\textwidth]{fig_SphericalManifold_UniformDifference_c2.png}
    \caption{Difference ($c=1$ and $c=0$)}
    \label{fig:SphericalManifold-UniformDensityDifference_c0c10}
  \end{subfigure}
  \caption{Estimated uniform density $\psi_{uni}$ (a), numerical solutions to the Kolmogorov problem  with $c=0,\, 1,\, 2$ (e, b, c), and difference between the solutions (d, f).}
\end{figure}

In this example, we solve the Kolmogorov problem on the 2-sphere, $\Omega = S^2$. We use this example to explore the effects of varying the density-biasing parameter $c$. We first generate $n = 10^{4}$ samples from the uniform density on $S^2$ with respect to the standard (round) metric. Figure~\ref{fig:SphericalManifold-UniformDensityEstimate} shows the estimated density, which should be constant, $\psi_{uni}(\boldsymbol{x}) = (4 \pi )^{-1} \approx 0.08$. Since the density is constant on the manifold, the Kolmogorov operator simplifies so that it is independent of $c$: $\mathcal{L}_{\psi_{uni}, c} f = \Delta f$. Figures~\ref{fig:SphericalManifold-UniformDensitySolution_c1},~\ref{fig:SphericalManifold-UniformDensitySolution_c2}, and~\ref{fig:SphericalManifold-UniformDensitySolution_c0} show the solution to the Kolmogorov problem in~\eqref{eq:Kolmogorov} with source term $g(\boldsymbol{x}) = \boldsymbol{x} \cdot \boldsymbol{r}$ and \edits{$\boldsymbol{r} = (1,0,0)^\top$ and $c=1$, $c=2$, and $c=0$.} Figures~\ref{fig:SphericalManifold-UniformDensityDifference_c0c1}~and~\ref{fig:SphericalManifold-UniformDensityDifference_c0c10} show the difference between the solution \edits{with $c=1$ and $c=2$ and $c=0$,} respectively. We compute the bandwidth function~\eqref{eq:bandwdith-parameter} using $k_{nn} = 25$ nearest neighbors and set the entries of the density kernel matrix $\boldsymbol{K}_{\epsilon}$ to zero if they are below the threshold $10^{-2}$. We use the optimal bandwidth parameter $\epsilon$---see Section~\ref{sec:optimal-bandwidth}. These results show that the solutions are similar, as expected. However, the error is larger with larger $c$ because errors in the density estimate are more exaggerated. We see that regions with the largest error are the same in both cases, and are larger when $c=2$.

\begin{figure}[h!]
  \centering
  \begin{subfigure}{0.45\textwidth}
    \includegraphics[width=1.0\textwidth]{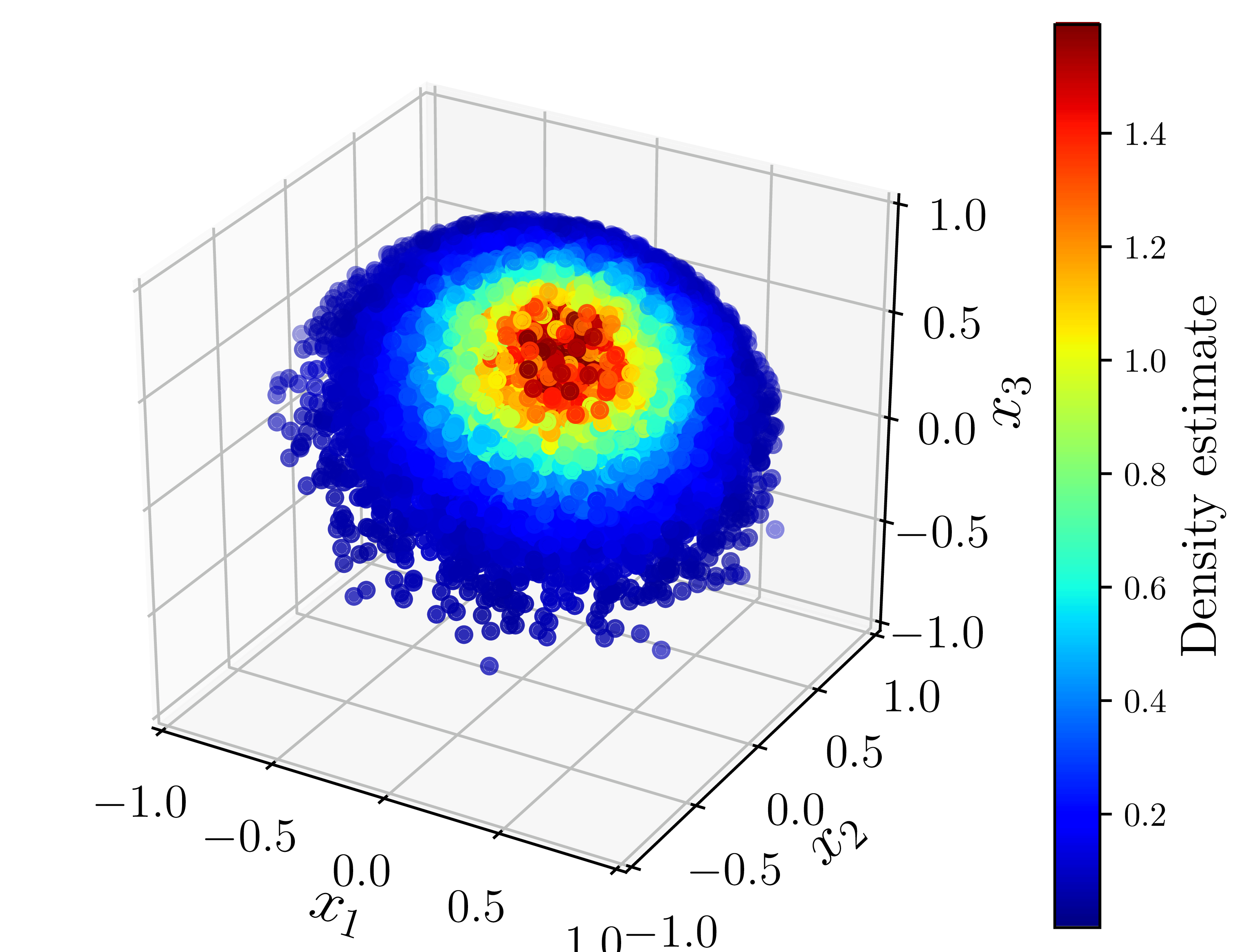}
    \caption{Estimated density}
    \label{fig:vonMises-distribution-estimated-density}
  \end{subfigure}
  \begin{subfigure}{0.45\textwidth}
    \includegraphics[width=1.0\textwidth]{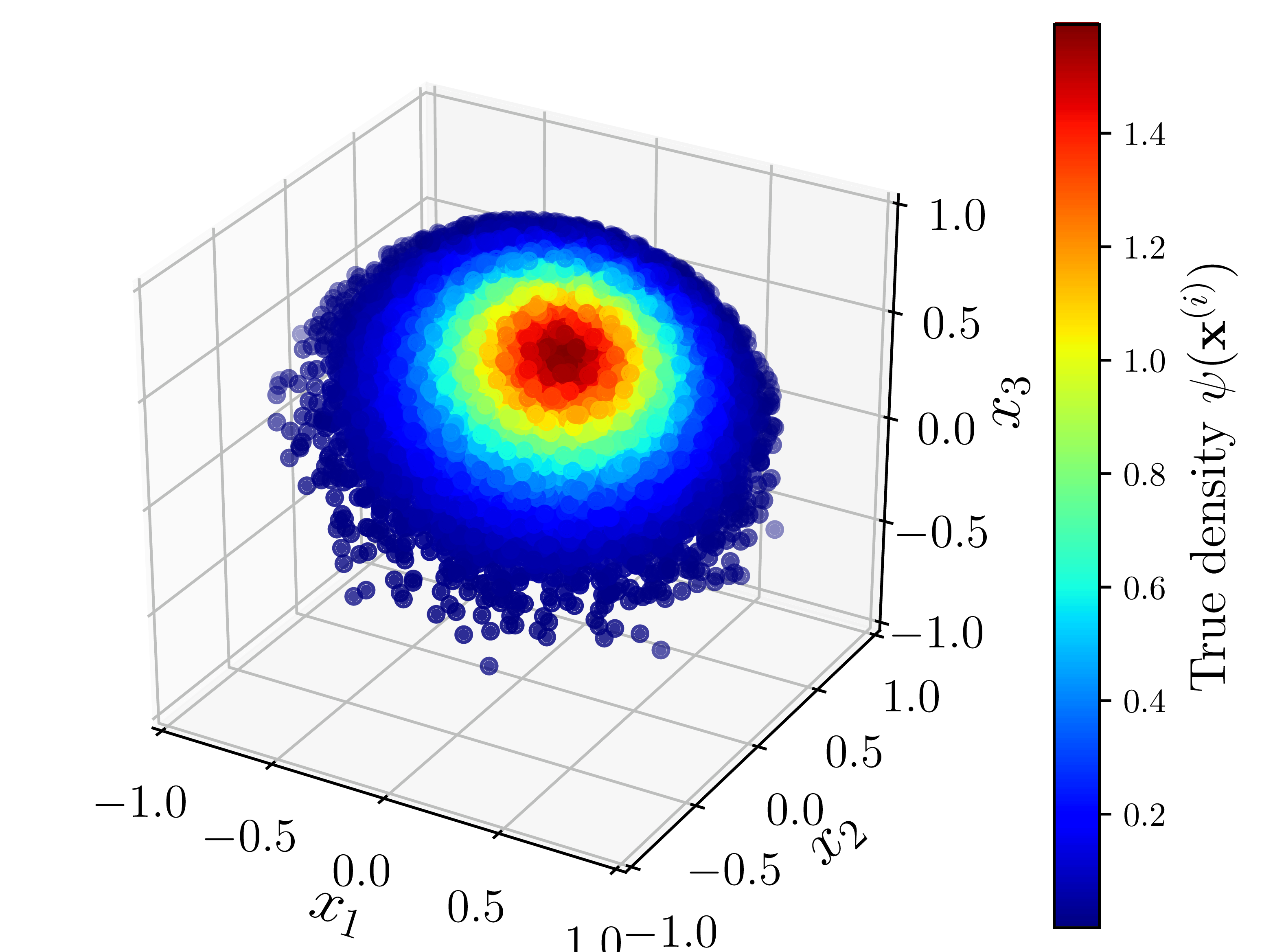}
    \caption{True density}
    \label{fig:vonMises-distribution-true-density}
  \end{subfigure}
  \caption{The density estimated using $n=10^4$ samples from the von Mises-Fisher distribution in~\eqref{eq:vonMises-distribution-density} with $\kappa = 10$ and $\boldsymbol{u} = \boldsymbol{\widetilde{u}} / \| \boldsymbol{\widetilde{u}} \|$, $\boldsymbol{\widetilde{u}} = (\frac{1}{2},\, -\frac{1}{2},\, 1)^\top$.}
  \label{fig:vonMises-distribution}
\end{figure}

\begin{figure}[h!]
  \centering
  \begin{subfigure}{0.45\textwidth}
    \includegraphics[width=1.0\textwidth]{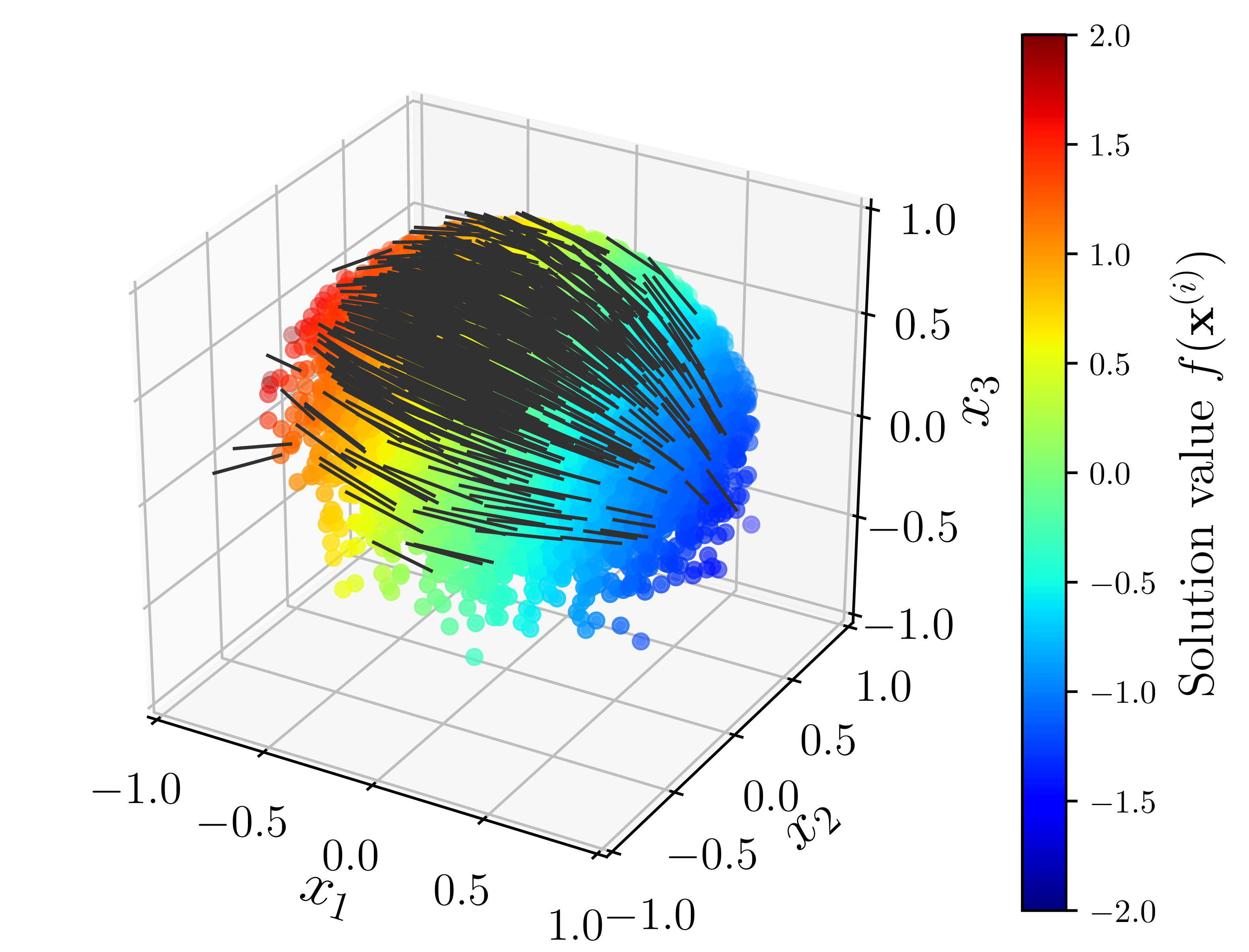}
    \caption{$c=0$}
  \end{subfigure}
  \begin{subfigure}{0.45\textwidth}
    \includegraphics[width=1.0\textwidth]{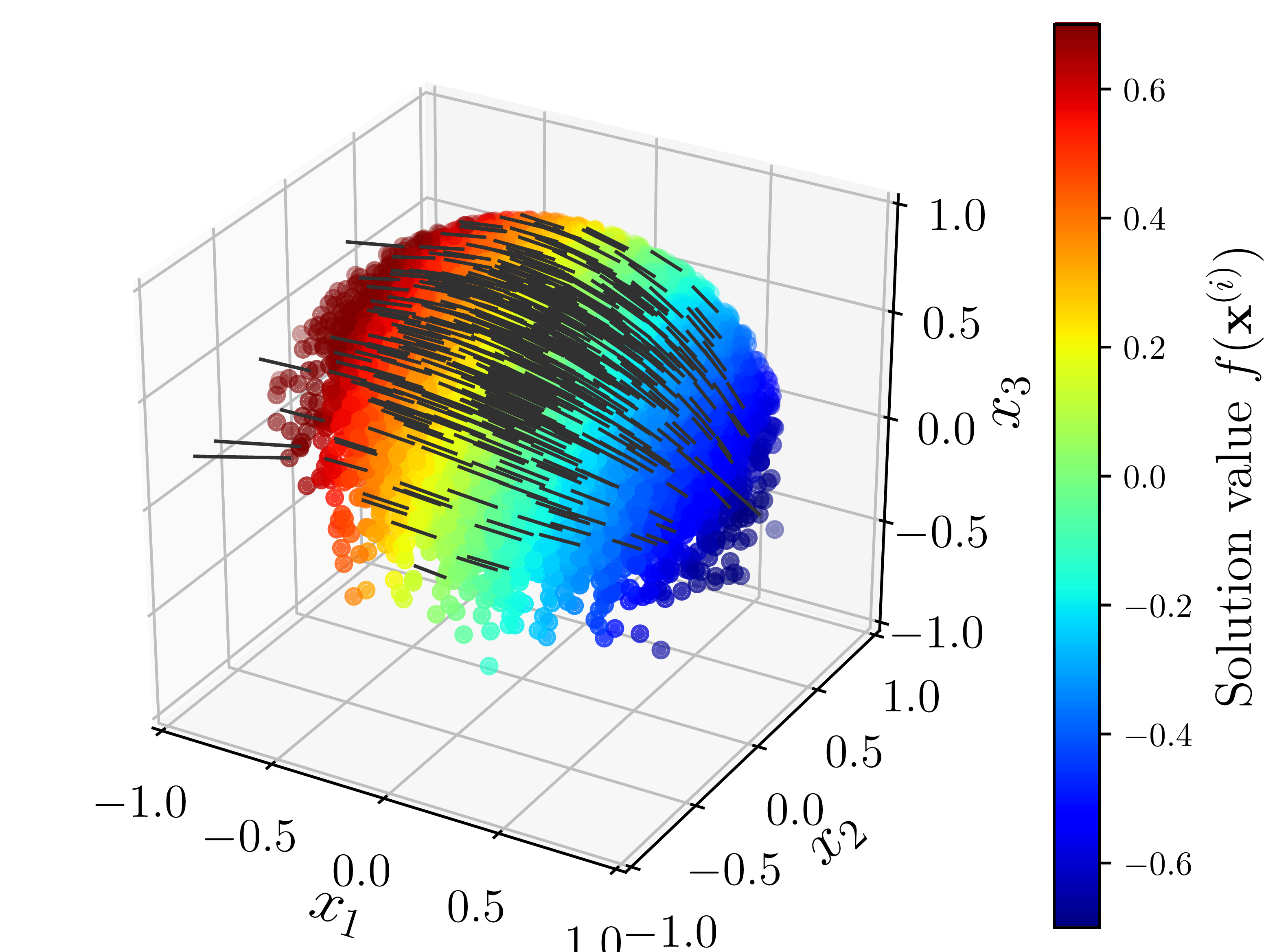}
    \caption{$c=1$}
    \label{fig:vonMises-Kolmogorov-solution-c1}
  \end{subfigure}
  \begin{subfigure}{0.45\textwidth}
    \includegraphics[width=1.0\textwidth]{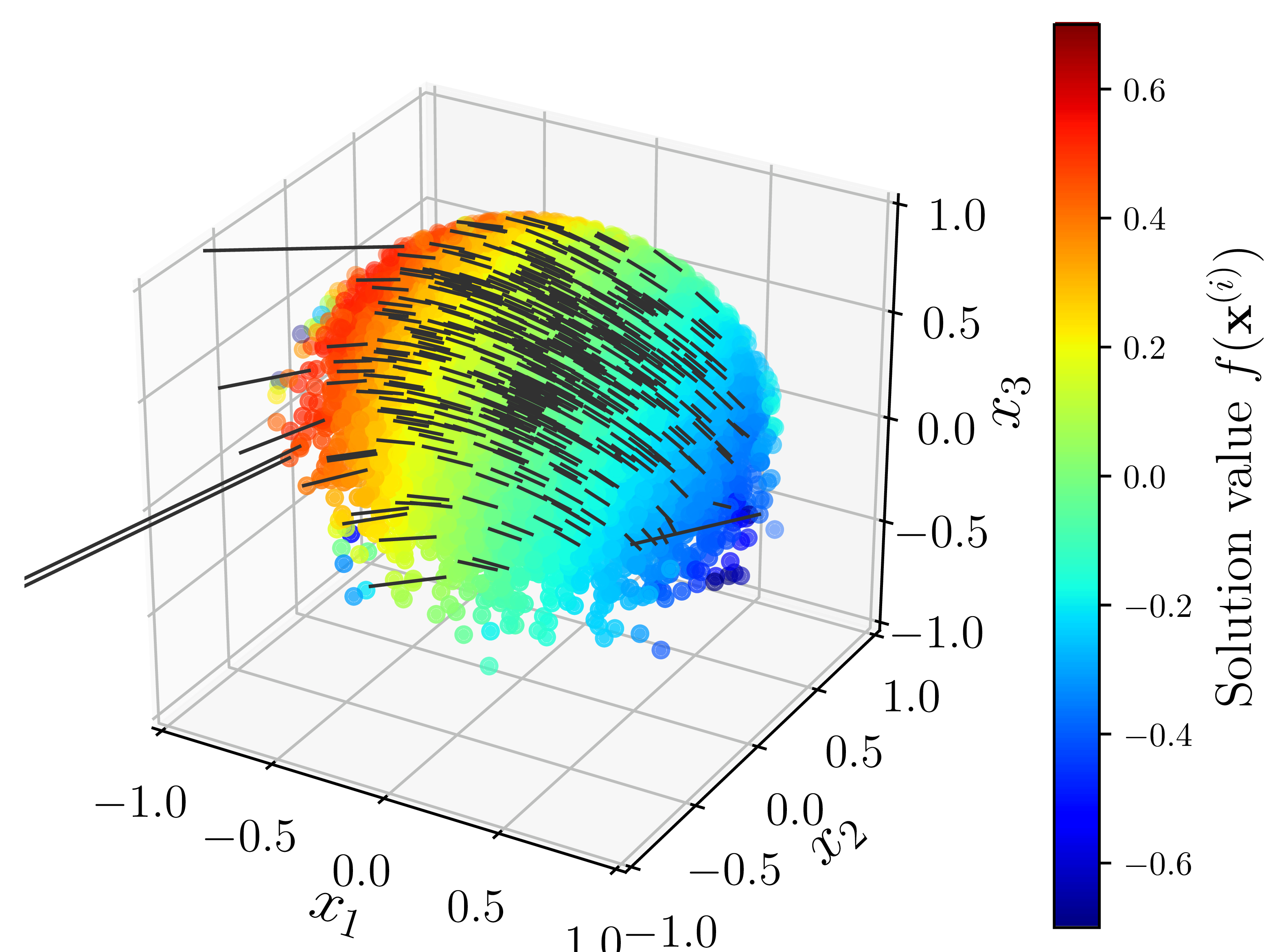}
    \caption{$c=2$}
    \label{fig:vonMises-Kolmogorov-solution-c2}
  \end{subfigure}
  \begin{subfigure}{0.45\textwidth}
    \includegraphics[width=1.0\textwidth]{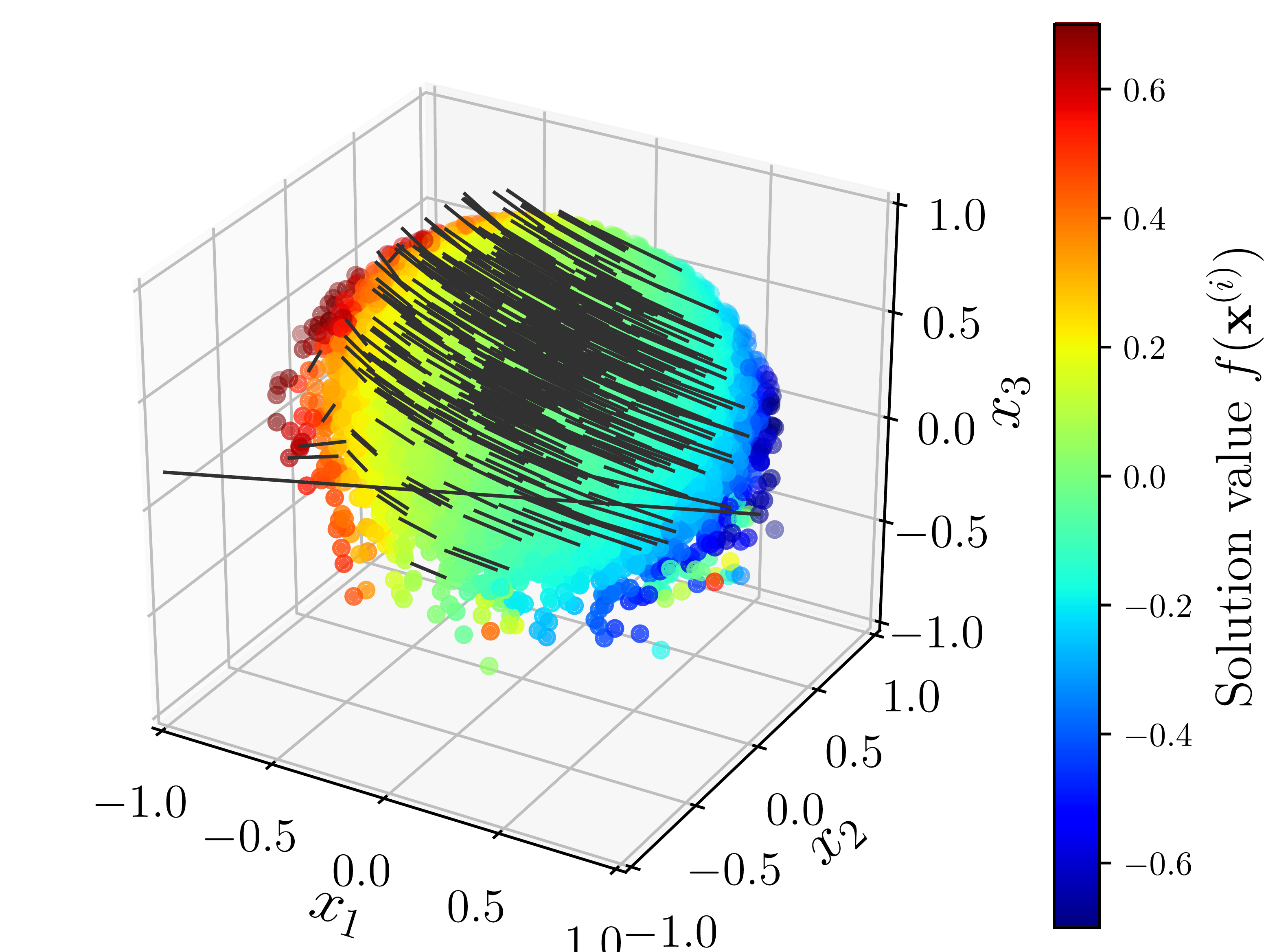}
    \caption{$c=4$}
    \label{fig:vonMises-Kolmogorov-solution-c4}
  \end{subfigure}
  \caption{Solution to the Kolmogorov problem in~\eqref{eq:Kolmogorov-problem} for the von Mises-Fisher distribution with $\kappa = 10$ and $\boldsymbol{u} = \boldsymbol{\widetilde{u}} / \| \boldsymbol{\widetilde{u}} \|$, $\boldsymbol{\widetilde{u}} = (\frac{1}{2},\, -\frac{1}{2},\, 1)^\top$ (Equation~\eqref{eq:vonMises-distribution-density}) with (a) $c=0$, (b) $c=1$, (c) $c=2$, and (d) $c=4$. The arrows are the gradient of the solution, which more strongly follow the curvature of the underlying distribution for larger values of $c$.}
  \label{fig:vonMises-Kolmogorov-solution}
\end{figure}

We also generate $n=2.5 \times 10^{4}$ samples on the unit sphere that are distributed according to the von Mises-Fisher distribution. In three dimensions, the density function is parameterized by $\kappa > 0$ and a unit vector $\boldsymbol{u}$ such that (Figure~\ref{fig:vonMises-distribution-true-density})
\begin{equation}
    \psi(\boldsymbol{x}) = \frac{\kappa}{2 \pi (\exp{(\kappa)}-\exp{(-\kappa)})} \exp{(\kappa \boldsymbol{u} \cdot \boldsymbol{x})}.
    \label{eq:vonMises-distribution-density}
\end{equation}
We emphasize that this distribution is defined on the unit sphere ($\| \boldsymbol{x} \| = 1$) and, therefore, the manifold dimension is $d=2$. We sample this distribution using rejection sampling: first sampling from the uniform distribution over the unit sphere, then accepting that sample with probability $\psi_{uni}(\boldsymbol{x})/(M \psi(\boldsymbol{x}))$, where $M = \max{(\psi_{uni}(\boldsymbol{x})/\psi(\boldsymbol{x}))}$. We use $n = 10^4$ samples to estimate the density with $\kappa = 10$ and $\boldsymbol{u} = \boldsymbol{\widetilde{u}} / \| \boldsymbol{\widetilde{u}} \|$ and $\boldsymbol{\widetilde{u}} = (\frac{1}{2},\, -\frac{1}{2},\, 1)^\top$---as shown in Figure~\ref{fig:vonMises-distribution-estimated-density}. As in the uniform case, we compute the bandwidth function~\eqref{eq:bandwdith-parameter} using $k_{nn} = 25$ nearest neighbors and set the entries of the density kernel matrix $\boldsymbol{K}_{\epsilon}$ to zero if they are below the threshold $10^{-2}$. We use the optimal bandwidth parameter $\epsilon$---see Section~\ref{sec:optimal-bandwidth}.

We solve the Kolmogorov problem in~\eqref{eq:Kolmogorov-problem} for the right hand side function $g(\boldsymbol{x}) = \boldsymbol{x} \cdot \boldsymbol{r}$ with \edits{$\boldsymbol{r} = (1,0,0)^\top$}. Figure~\ref{fig:vonMises-Kolmogorov-solution} compares numerical solutions $\bm f$ and their associated gradient vector fields corresponding to the von Mises-Fisher distribution with the operator parameter $c=0$, $c=1$, $c=2$, and $c=4$. When $c=0$, the Kolmogorov operator simplifies to the Laplace-Beltrami operator $\mathcal{L}_{0, \psi} = \Delta$, which is independent of the underlying density. In~\eqref{eq:Kolmogorov-operator-similarity} we show that the operator $\mathcal{L}_{\psi, c}$ is equivalent to $\mathcal{L}_{\widetilde{\psi}, 1}$ with $\widetilde{{\psi}} \propto \psi^c$. Therefore, larger values of $c$ more strongly bias the operator according to the density function $\psi$. \edits{For large $c$, $\tilde{\psi} \ll 1$ in the tails of the distribution, meaning samples $\bm{x} \sim \psi$ are disproportionately sampled in low-probability regions of $\tilde{\psi}$. We see in Figure~\ref{fig:vonMises-Kolmogorov-solution} that the solution is noisy in the tails of the distribution for $c=2$ and $c=4$. Heuristically, we obtain more accurate solutions setting $c=1$ and sampling from $\tilde{\psi}$ directly.}

\subsection{Evolution of a time-dependent density} \label{sec:advection-example}

In this example, we evolve a $d$-dimensional random variable $\boldsymbol{X}_t$ that is distributed according to a time-dependent density $\psi(\boldsymbol{x}; t)$. For simplicity, we assume that the density is defined over $\mathbb{R}^{d}$ (rather than some nonlinear manifold). However, the same procedure can evolve the random variables on a manifold $\Omega \subset \mathbb R^d$. We evolve an unnormalized density function $\mu(\boldsymbol{x}, t)$ according to 
\begin{equation}
    \partial_t \mu + \nabla \cdot (\boldsymbol{u} \mu) - \boldsymbol{D}_{\boldsymbol{\sigma}} \mu = \mu g^{\prime},
    \label{eq:unnormalized-Fokker-Planck}
\end{equation}
where $\boldsymbol{u}(\boldsymbol{x}, t)$ is a prescribed velocity field, $\boldsymbol{D}_{\boldsymbol{\sigma}}$ is a diffusion operator defined by the matrix $\boldsymbol{\sigma}(\boldsymbol{x}, t) \in \mathbb{R}^{d \times d}$
\begin{equation*}
    \boldsymbol{D}_{\boldsymbol{\sigma}} \mu = \frac{1}{2} \sum_{s=1}^{d} \sum_{i=1}^{d} \frac{\partial}{\partial x_i} \left( \sigma_{is} \sum_{j=1}^{d} \frac{\partial}{\partial x_j} \left( \sigma_{js} \mu \right) \right),
\end{equation*}
and $g^{\prime}(\boldsymbol{x}, t)$ is an external source function. \edits{We find problems with this form in kinetic theory, where $\mu$ is the density function over material space and velocity. The material moves around the domain according to an advection-diffusion model and $g$ represents sources/sinks of material. We leave the specifics of this application to future work and solve the generic version of this problem here.} Given the unnormalized density $\mu$, we define the time-dependent  probability density function $\psi(\bm x; t) = \mu(\bm x; t)/M(t)$, where $M(t) = \int_{\mathbb{R}^{d}} \mu(\boldsymbol{x}, t) \, d\boldsymbol{x} >0$ is a normalization factor. Letting $\bar{g} = \int_{\mathbb{R}^{d}} g^{\prime} \psi \, d\boldsymbol{x}$, the normalization factor evolves according to 
\begin{equation*}
    \partial_t M = M \bar{g},
\end{equation*}
and the normalized density evolves according to
\begin{equation}
    \partial_t \psi + \nabla \cdot (\boldsymbol{u} \psi) - \boldsymbol{D}_{\boldsymbol{\sigma}} \psi = \psi g,
    \label{eq:Fokker-Planck}
\end{equation}
with $g = g^{\prime} - \bar{g}$. In the following example, our goal is to compute particle-based approximations to the solution of~\eqref{eq:Fokker-Planck}. 

First, note that when $g=0$, \eqref{eq:Fokker-Planck} is the Fokker-Planck equation and a random variable $\boldsymbol{X}_t$ that is distributed according to $\psi$ evolves according to
\begin{equation*}
    d\boldsymbol{X}_t = \boldsymbol{u} \, dt + \boldsymbol{\sigma} \, d\boldsymbol{W}_t,
\end{equation*}
where $\boldsymbol{W}_t$ is a $d$-dimensional Wiener process. When $g \neq 0$, we can construct solutions to~\eqref{eq:Fokker-Planck} by solving the Kolmogorov problem 
\begin{equation}
    \mathcal{L}_{1,\psi} f = \psi^{-1} \nabla \cdot (\psi \nabla f) = g, \quad \mathbb E_{\psi(\cdot;t)} f(\cdot; t) = 0,
    \label{eq:advection-Diffusion-Laplace}
\end{equation}
which allows us to rewrite~\eqref{eq:Fokker-Planck} as
\begin{equation}
    \partial_t \psi + \nabla \cdot ((\boldsymbol{u} - \nabla f) \psi) - \boldsymbol{D}_{\boldsymbol{\sigma}} \psi = 0.
    \label{eq:modified-Fokker-Planck}
\end{equation}
The Kolmogorov problem in~\eqref{eq:advection-Diffusion-Laplace} and the corresponding gradient field $\nabla f$ define an effective velocity $\boldsymbol{u^{\prime}} = \boldsymbol{u} - \nabla f$. The corresponding random variable evolves according to 
\begin{equation}
\label{eq:SDE}
    d \boldsymbol{X}_t = \boldsymbol{u^{\prime}} \, dt + \boldsymbol{\sigma} \, d\boldsymbol{W}_t.
\end{equation}
Therefore, we can construct our particle-based approximation to~\eqref{eq:Fokker-Planck} by (i) computing the effective velocity $\bm u'$; and (ii) evolving realizations of $\bm X_t$ according to~\eqref{eq:SDE}. Notationally, we distinguish between the random variable $\boldsymbol{X}_t$ and a specific realization $\boldsymbol{x}_t$ using upper- versus lower-case letters. The $t$ subscript further distinguishes time-dependent samples from the independent coordinate $\boldsymbol{x}$.

We begin by drawing $n$ samples that are initially distributed according to the density $\psi(\cdot; 0)$ (i.e., $\boldsymbol{x}_1^{(i)} \sim \psi(\cdot; 0)$). We evolve the $n$ samples so that 
\begin{equation*}
    \mathbb E_{\psi(\cdot;t)} h  = \int_{\mathbb{R}^{d}} h(\bm x) \psi(\bm x, t ) \, d\boldsymbol{x} = \lim_{n \rightarrow \infty} \frac{1}{n} \sum_{i=1}^{n} h(\boldsymbol{x}_t^{(i)})
\end{equation*}
for all integrable $h$. Each timestep consists of two steps: (i) use the procedures in Sections~\ref{sec:discrete-kolmogorov} and~\ref{sec:solution-gradients} to estimate the gradient $\nabla f$ at each sample, and (ii) update each sampling using an explicit time-stepping algorithm. Letting $\boldsymbol{u}^{(i)}$ and $\nabla f^{(i)}$ be the velocity and gradient estimate at the $i^{th}$ sample, we update each sample $\boldsymbol{x}_t^{(i)}$ over a timestep $\Delta t$ using
\begin{equation}
    \boldsymbol{x}_{t+\Delta t}^{(i)} = \boldsymbol{x}_{t}^{(i)} + \Delta t (\boldsymbol{u}^{(i)} - \nabla f^{(i)}) + \sqrt{\Delta t} \boldsymbol{\sigma} \boldsymbol{W},
    \label{eq:discrete-sample-evolution}
\end{equation}
where $\boldsymbol{W} \sim \mathcal{N}(\boldsymbol{0}, \boldsymbol{I})$. 

\edits{As a basic validation of our method, we consider the case with $g(\bm{x})=(\bm{x}-\bm{\bar{x}}) \cdot \bm{r}$, $\bm{r} = (1, 0)$, where $\bm{\bar{x}} = \mathbb{E}_{\psi(\cdot, t)} \bm{x}$.} We also set $\boldsymbol{u} = \boldsymbol{0}$ and $\boldsymbol{\sigma} = \boldsymbol{I}$. In this case, the solution should diffuse such that $\boldsymbol{x}_{t+\Delta t}^{(i)} = \boldsymbol{x}_{t}^{(i)} + \sqrt{\Delta t} \boldsymbol{W}$. \edits{We compute the effective velocity $\boldsymbol{u^{\prime}} = -\nabla f$, where $f$ solves~\eqref{eq:advection-Diffusion-Laplace}, i.e.,  $\mathcal{L}_{1,\psi} f = g$ and $\mathbb E_{\psi(\cdot;t)} f(\cdot; t) = 0$. If we initially sample from the standard Gaussian distribution $\psi(\bm{x}; 0) = \mathcal{N}(\bm{x}; \bm{0}, \bm{I})$, then the exact solution is $-\nabla f^{(i)} = (1, 0)$} Figure~\ref{fig:density-evolution-diffusion-only} shows the solution; as expected, the particles diffuse isotropically and the effective velocity \edits{translates the samples in the direction $\bm{r} = (1, 0)$}.

\begin{figure}[h!]
  \centering
  \begin{subfigure}{0.45\textwidth}
    \includegraphics[width=1.0\textwidth]{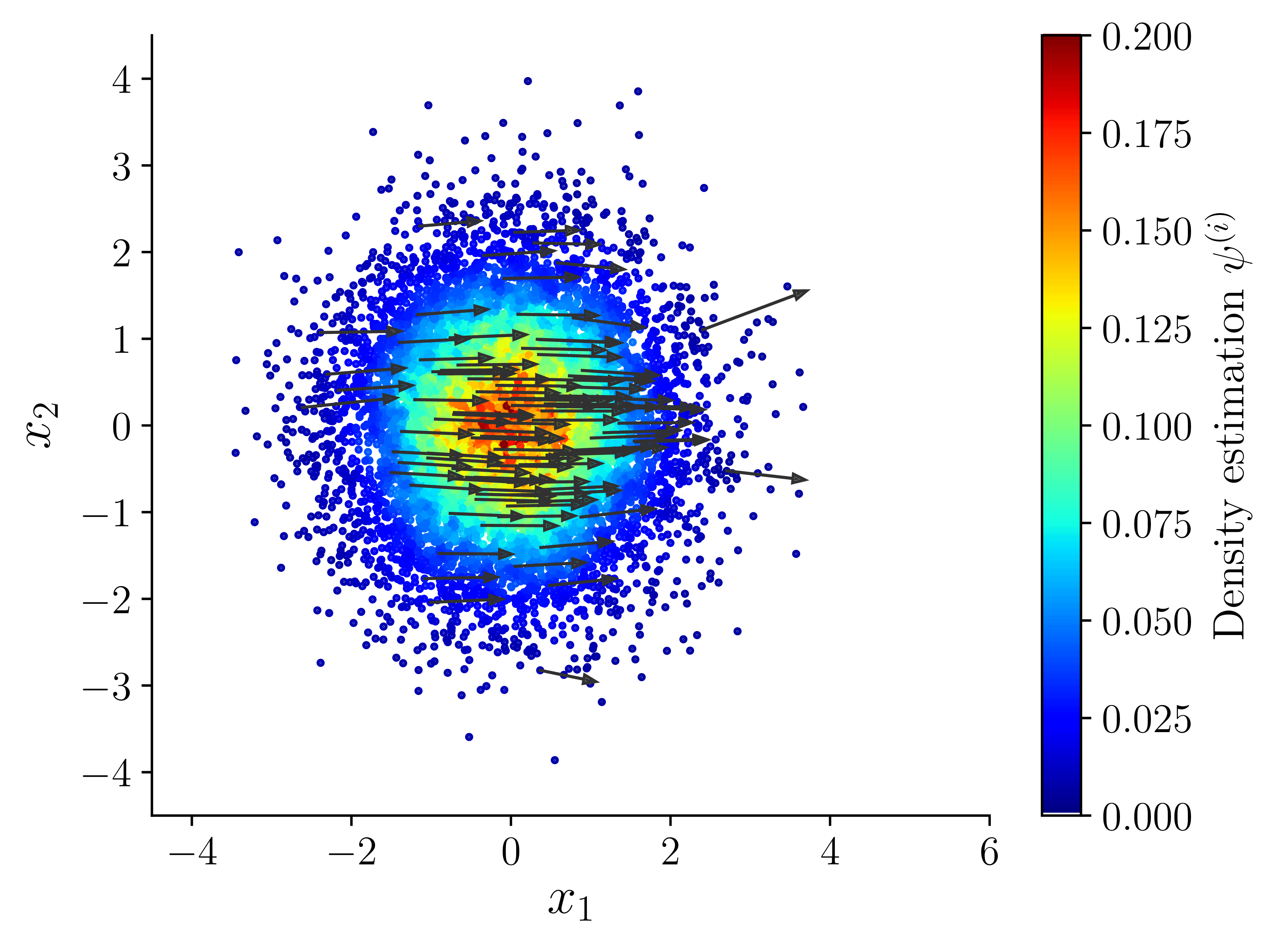}
    \caption{$t=0$}
  \end{subfigure}
  \begin{subfigure}{0.45\textwidth}
    \includegraphics[width=1.0\textwidth]{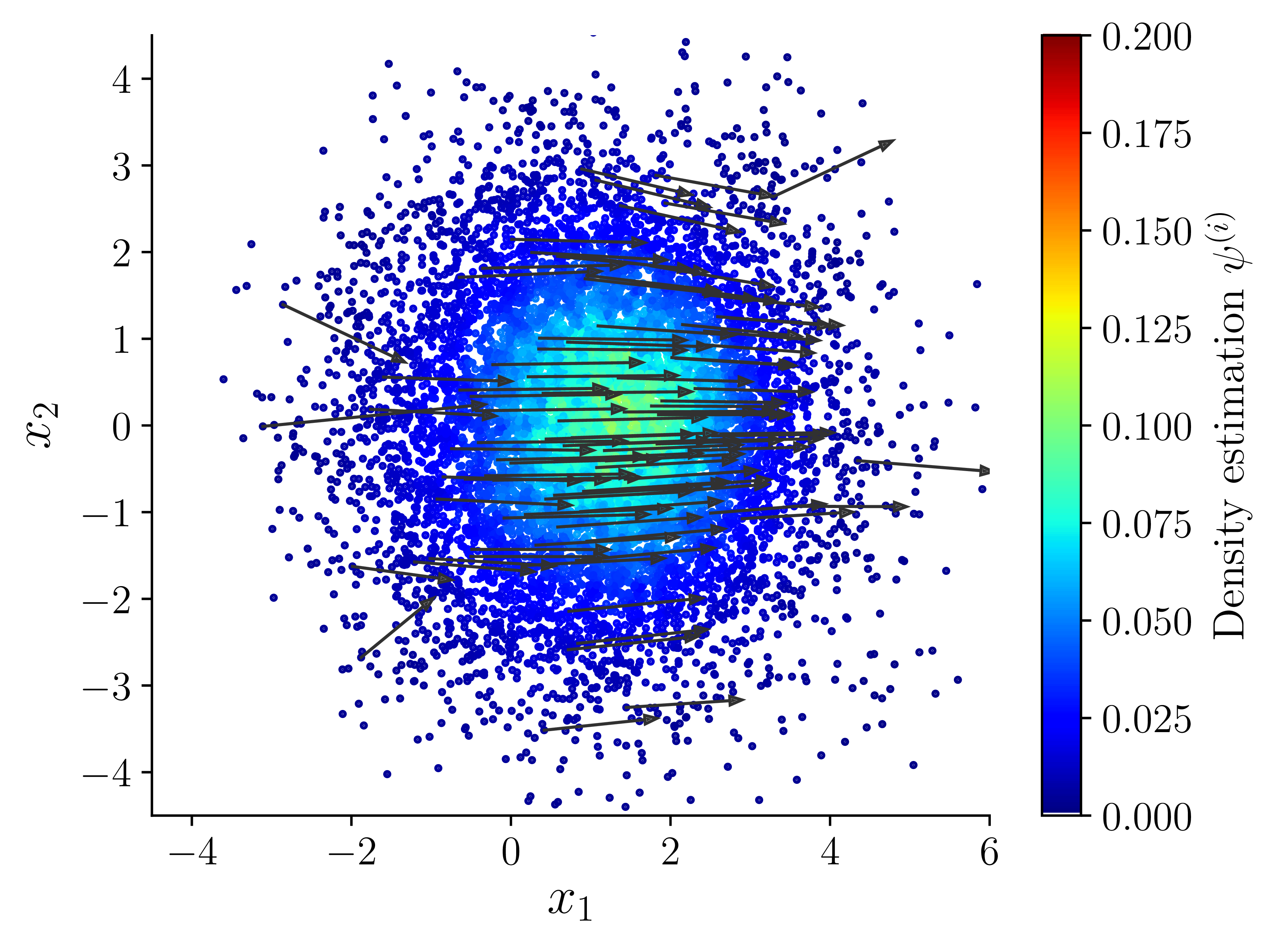}
    \caption{$t=1$}
  \end{subfigure}
  \caption{The solution to the density evolution problem in~\eqref{eq:Fokker-Planck} with $\boldsymbol{u} = \boldsymbol{0}$, $\boldsymbol{\sigma} = \boldsymbol{I}$, and $g = 0$. To verify our method, we solve the homogeneous problem \edits{$\mathcal{L}_{1,\psi} f = (\bm{x}-\bm{\bar{x}}) \cdot \bm{r}$, with $\bm{\bar{x}} = \mathbb{E}_{\psi(\cdot; t)} \bm{x}$ and $\bm{r} = (1, 0)$} such that $\mathbb E_{\psi(\cdot;t)}f = 0$, and define the effective velocity \edits{$\boldsymbol{u^{\prime}} = -\nabla f = (1, 0)$}. As expected, the effective velocity \edits{translates the samples in the direction $\bm{r}$} and the solution diffuses according to $\boldsymbol{D}_{\boldsymbol{\sigma}}$.}
  \label{fig:density-evolution-diffusion-only}
\end{figure}

We now consider a physically motivated example. We could, for example, interpret $\mu(\boldsymbol{x}, t)$ as the concentration of a non-reactive tracer and let $M(t) = \int_{\mathbb{R}^{d}} \mu(\boldsymbol{x}, t) \, d\boldsymbol{x}$ be the total amount of the tracer in the domain. Suppose that $\mu$ evolves according to~\eqref{eq:unnormalized-Fokker-Planck} with known steady state velocity $\boldsymbol{u}$. For example, the velocity may satisfy Darcy's law. The model~\eqref{eq:unnormalized-Fokker-Planck} simulates how the tracer flows through the domain.

\begin{figure}[h!]
  \centering
  \begin{subfigure}{0.45\textwidth}
    \includegraphics[width=1.0\textwidth]{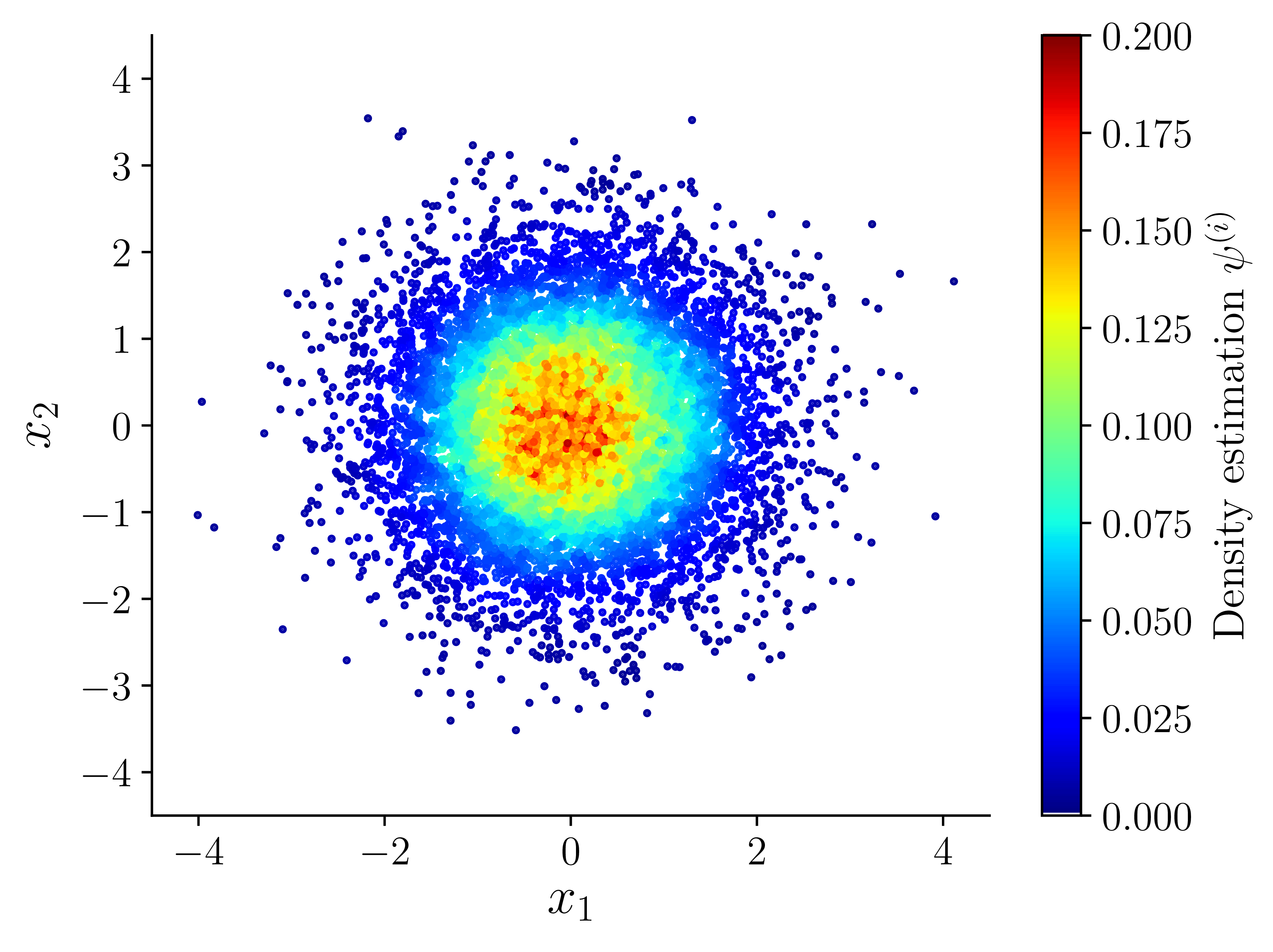}
    \caption{$t=0$}
    \label{fig:density-evolution-with-source-000}
  \end{subfigure}
  \begin{subfigure}{0.45\textwidth}
    \includegraphics[width=1.0\textwidth]{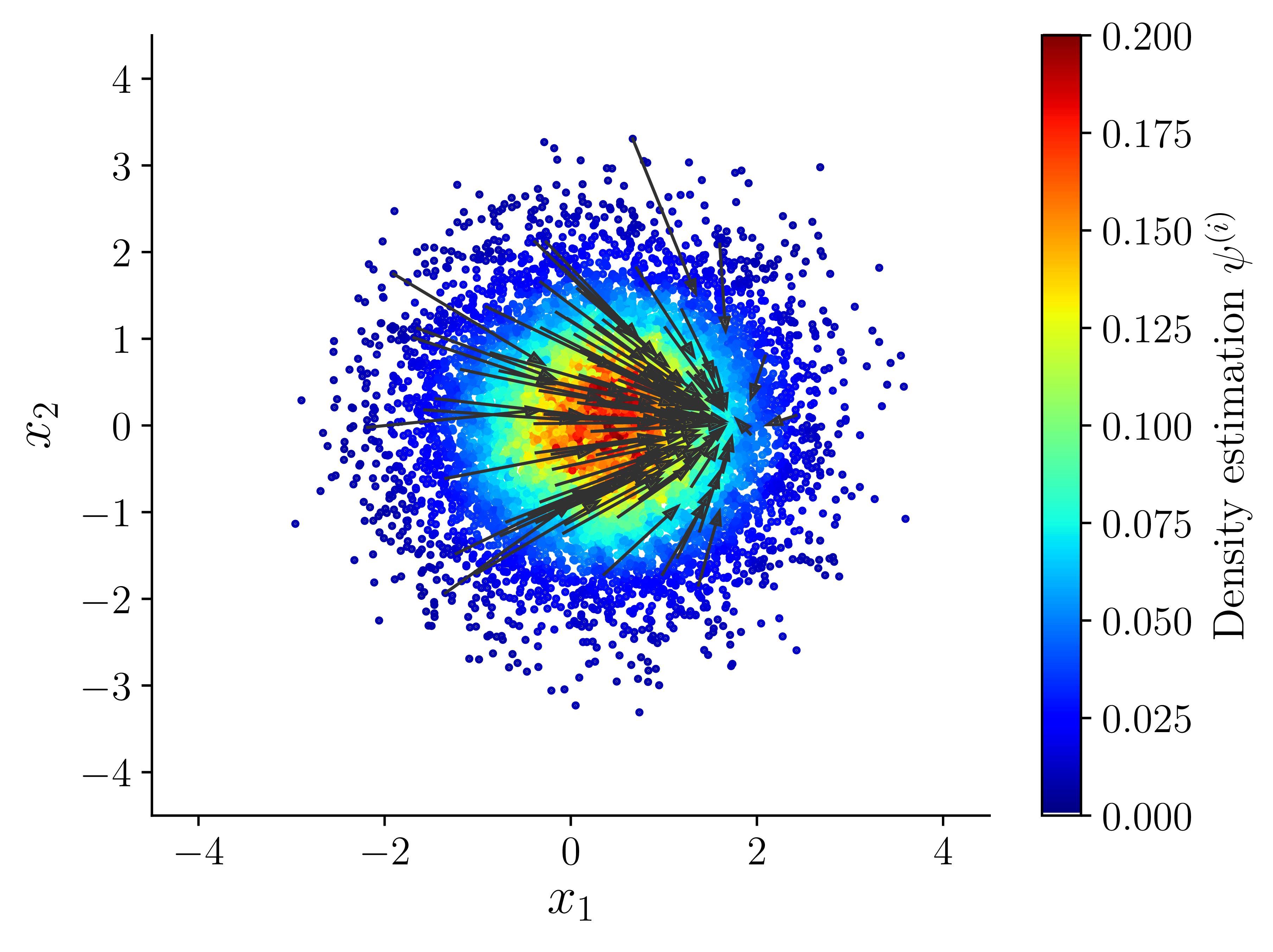}
    \caption{$t=0.3$}
    \label{fig:density-evolution-with-source-333}
  \end{subfigure}
  \begin{subfigure}{0.45\textwidth}
    \includegraphics[width=1.0\textwidth]{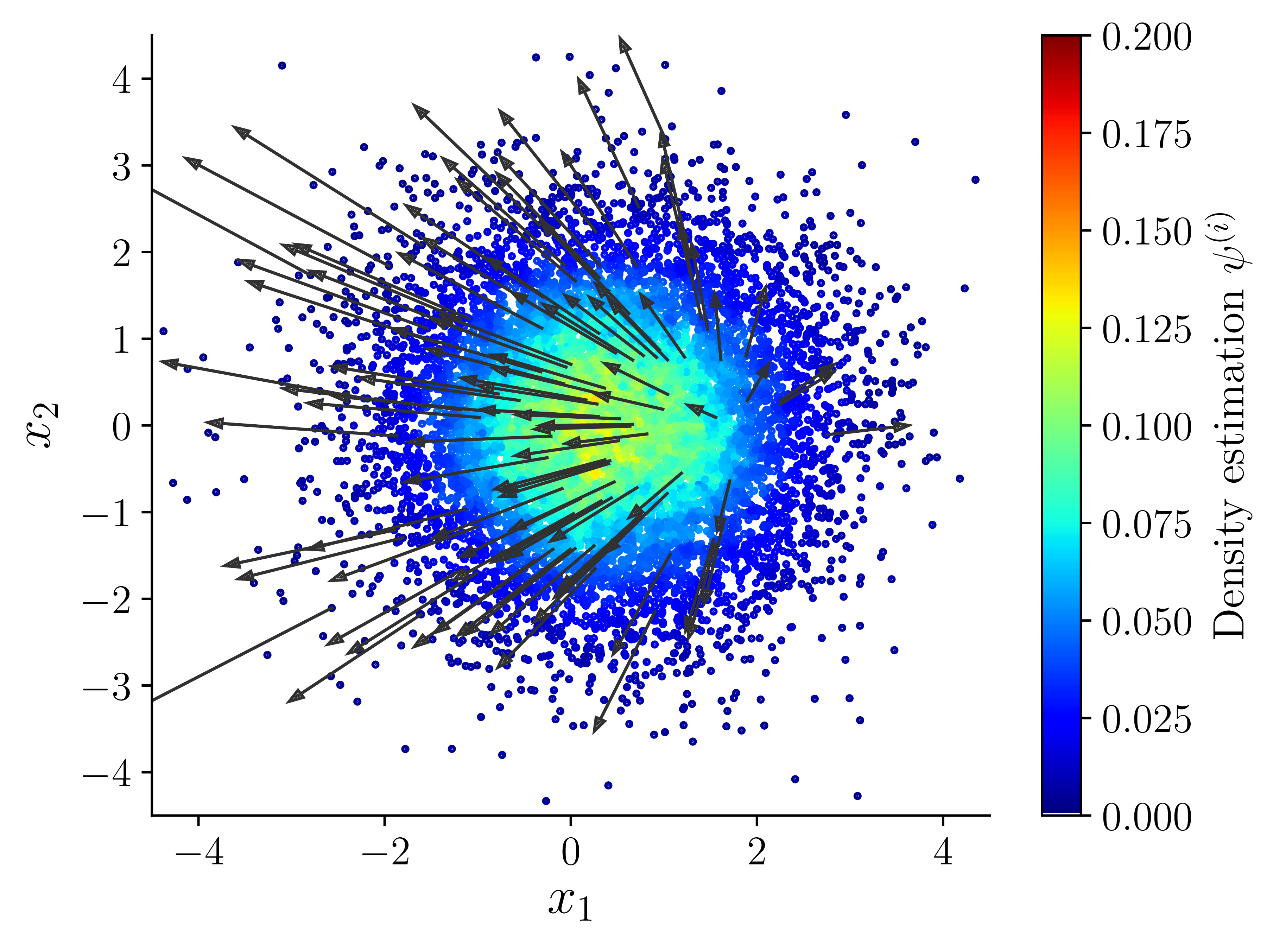}
    \caption{$t=0.7$}
    \label{fig:density-evolution-with-source-666}
  \end{subfigure}
  \begin{subfigure}{0.45\textwidth}
    \includegraphics[width=1.0\textwidth]{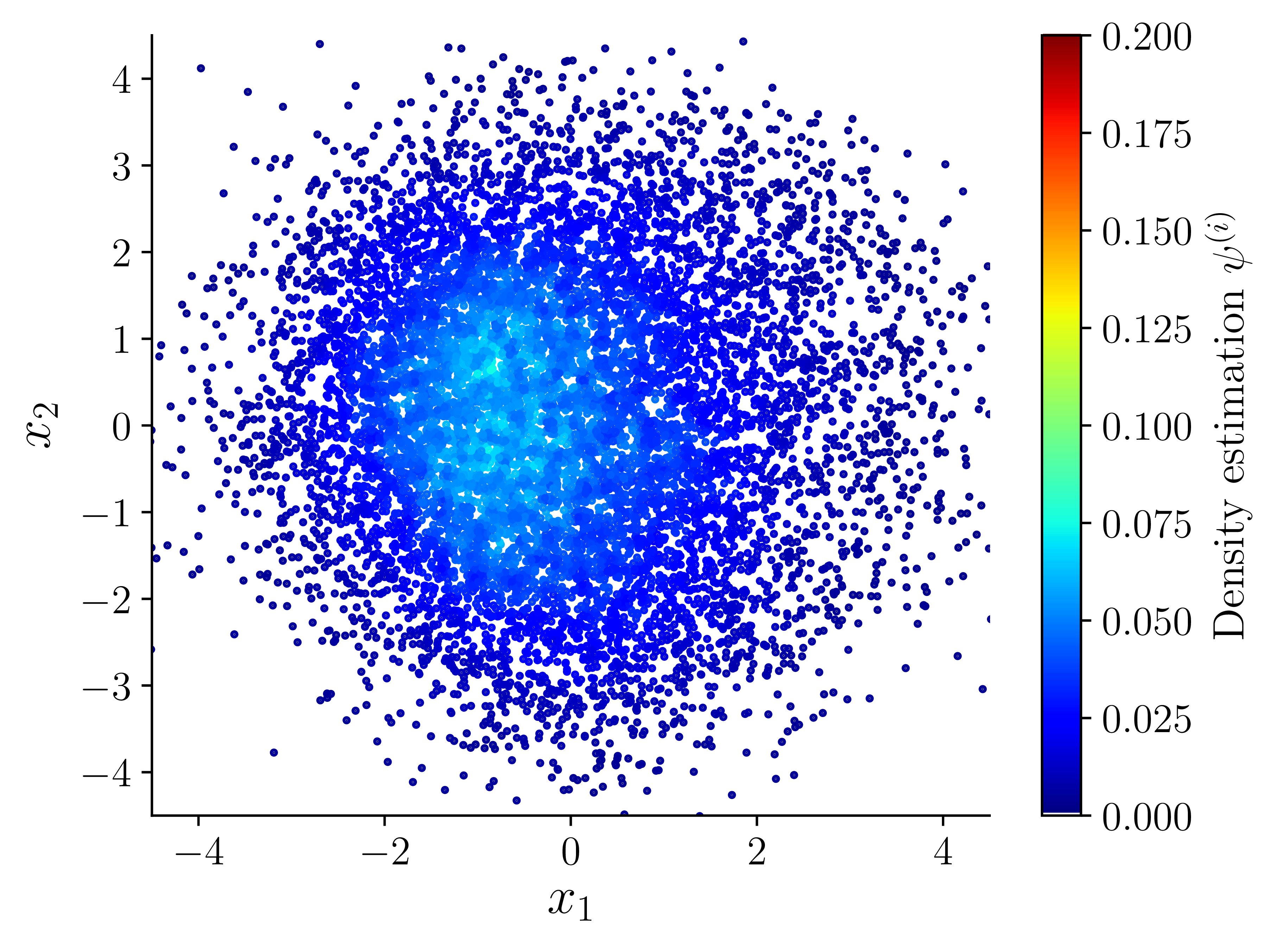}
    \caption{$t=1$}
    \label{fig:density-evolution-with-source-1000}
  \end{subfigure}
  \caption{The solution to the density evolution problem in~\eqref{eq:Fokker-Planck} with $\boldsymbol{u} = \boldsymbol{0}$, $\boldsymbol{\sigma} = \boldsymbol{I}$, and $g = g^{\prime}-\bar{g}$ with $g^{\prime}$ defined in~\eqref{eq:advection-diffusion-forcing} and $\bar{g} = \frac{1}{n} \sum_{i=1}^{n} f^{\prime}(\boldsymbol{x}^{(i)}, t)$. Colors show the solution $f$ to the weighted Laplace problem $\mathcal{L}_{\psi, 1} f = g$ with $\mathbb E_{\psi(\cdot;t)} f(\cdot;t) = 0 $, and the arrows show the effective velocity $\bm{u^{\prime}} = -\nabla f$. Panels~(b) and~(c) show how sources/sinks result in an effective velocity that redistributes mass density.}
  \label{fig:density-evolution-with-source}
\end{figure}

We add/remove mass through a ``well'' to demonstrate how a source term affects the effective velocity. In our physically motivated example, the addition or removal of the tracer may occur to to pumping or leakage at an actual well. Suppose mass is injected at specified location $\boldsymbol{\bar{x}}$ such that 
\begin{equation}
    g^{\prime}(\boldsymbol{x}, t) = 35 \sin{(2 \pi t)} \exp{\left( -\frac{1}{20}(\boldsymbol{x}-\boldsymbol{\bar{x}})^T (\boldsymbol{x}-\boldsymbol{\bar{x}}) \right)}.
    \label{eq:advection-diffusion-forcing}
\end{equation}
When $\sin{(2 \pi t)} > 0$, tracer is injected near the point $\boldsymbol{\bar{x}}$, and when $\sin{(2 \pi t)} < 0$ tracer is removed. We estimate $\bar{g}(t) \approx n^{-1}   \sum_{i=1}^{n} g^{\prime}(\boldsymbol{x}^{(i)}, t)$ to evolve the normalizing constant $M(t)$ and prescribe $g = g^{\prime} - \bar{g}$.
Figure~\ref{fig:density-evolution-with-source} shows the evolution of samples that are initially distributed according to a standard Gaussian distribution and evolve according to~\eqref{eq:discrete-sample-evolution} with $\boldsymbol{u} = \boldsymbol{0}$, $\boldsymbol{\sigma} = \boldsymbol{I}$, and $\boldsymbol{\bar{x}} = (1, \, 0)$. 

The scatterplot in Figure~\ref{fig:density-evolution-with-source} shows the solution $f$ to the Kolmogorov problem at each timestep. However, the gradient of the solution $\nabla f$ is the important result since it determines the effective velocity $\bm{u^{\prime}} = -\nabla f$. In Figs.~\ref{fig:density-evolution-with-source-000} and~\ref{fig:density-evolution-with-source-1000} the solution is zero since there is no injection or removal of mass from the source term ($\sin{(2 \pi t)} = 0$ at these times). The effective velocity is, therefore, also zero. However, we see in Figure~\ref{fig:density-evolution-with-source-333}, when $\sin{(2 \pi t)} > 0$, that the effective velocity moves mass toward the source location $\boldsymbol{\bar{x}}$ and in Figure~\ref{fig:density-evolution-with-source-666}, when $\sin{(2 \pi t)} < 0$ mass moves away from the source location. Recall, the mass is distributed according to $\psi$, which always integrates to 1. The effective velocity moves mass toward a source when $g>0$ since the mass influx results in a larger fraction of the mass being located in the injection region. The opposite is true when the source $g<0$. Thus, the qualitative behavior of the particle-based approximation is consistent with our physical intuition for the behavior of the density function.

\section{Conclusion} \label{sec:conclusions}

This work used the spectral decomposition of a discrete Kolmogorov operator approximating a gradient flow on a manifold to solve the associated Poisson-type problem (Equation~\ref{eq:Kolmogorov-problem}), and represent the gradient vector field of the solution. Our kernel-based approach performs these computations given samples from the equilibrium distribution of the gradient flow, without having to resort to explicit discretizations of the state space (e.g., as in finite-difference schemes). Therefore, our framework is well-suited for function and gradient-field approximation from unstructured data sampled on nonlinear  manifolds, embedded in a potentially high-dimensional ambient space. The method presented here extends previous work \cite{berryharlim2016}, which is primarily concerned with density estimation or discretizing the Kolmogorov operator, by using ideas from the exterior calculus on manifolds \cite{BerryGiannakis20} to represent gradient vector fields.

We also develop computationally efficient software that, given samples, (i) uses kernel density estimation to estimate the probability density function, (ii) computes the discrete Kolmogorov operator, and (iii) provides tools to represent functions and their gradients using the eigenbasis of the Kolmogorov operator. The two computational bottlenecks are (i) computing the eigendecomposition of the discrete Kolmogorov operator and (ii) assembling large kernel matrices. For the former, we use existing software: \texttt{Spectra} \cite{spectra}. To efficiently compute the kernel matrices, we employ an algorithm with $\mathcal{O}(n\log{(n)})$ complexity. While there is extensive literature and software addressing specific aspects of these problems, we believe that efficient implementations targeting the full algorithmic pipeline presented in this paper are not readily available in the public domain. Our implementation is available as part of the MIT Uncertainty Quantification software package (\texttt{muq.mit.edu}). Thus, we hope that the software and computational guidance presented here will be useful to the broad range of disciplines where data-driven approximation of differential operators plays a role. 


\begin{thebibliography}{10}

\bibitem{aryaetal1996}
Sunil Arya, David~M Mount, and Onuttom Narayan.
\newblock Accounting for boundary effects in nearest-neighbor searching.
\newblock {\em Discrete \& Computational Geometry}, 16(2):155--176, 1996.

\bibitem{aryaetal1998}
Sunil Arya, David~M Mount, Nathan~S Netanyahu, Ruth Silverman, and Angela~Y Wu.
\newblock An optimal algorithm for approximate nearest neighbor searching fixed
  dimensions.
\newblock {\em Journal of the ACM (JACM)}, 45(6):891--923, 1998.

\bibitem{bakryetal2013}
Dominique Bakry, Ivan Gentil, and Michel Ledoux.
\newblock {\em Analysis and geometry of Markov diffusion operators}, volume
  348.
\newblock Springer Science \& Business Media, 2013.

\bibitem{belkinniyogi2003}
Mikhail Belkin and Partha Niyogi.
\newblock Laplacian eigenmaps for dimensionality reduction and data
  representation.
\newblock {\em Neural computation}, 15(6):1373--1396, 2003.

\bibitem{bentley1975}
Jon~Louis Bentley.
\newblock Multidimensional binary search trees used for associative searching.
\newblock {\em Communications of the ACM}, 18(9):509--517, 1975.

\bibitem{BerryGiannakis20}
Tyrus Berry and Dimitrios Giannakis.
\newblock Spectral exterior calculus.
\newblock {\em Communications on Pure and Applied Mathematics}, 73(4):689--770,
  2019.

\bibitem{berryetal2015}
Tyrus Berry, Dimitrios Giannakis, and John Harlim.
\newblock Nonparametric forecasting of low-dimensional dynamical systems.
\newblock {\em Physical Review E}, 91(3):032915, 2015.

\bibitem{berryharlim2016}
Tyrus Berry and John Harlim.
\newblock Variable bandwidth diffusion kernels.
\newblock {\em Applied and Computational Harmonic Analysis}, 40(1):68--96,
  2016.

\bibitem{bertozzietal2018}
Andrea~L Bertozzi, Xiyang Luo, Andrew~M Stuart, and Konstantinos~C Zygalakis.
\newblock Uncertainty quantification in graph-based classification of high
  dimensional data.
\newblock {\em SIAM/ASA Journal on Uncertainty Quantification}, 6(2):568--595,
  2018.

\bibitem{nanoflann}
Jose~Luis Blanco and Pranjal~Kumar Rai.
\newblock nanoflann: a {C}++ header-only fork of {FLANN}, a library for nearest
  neighbor ({NN}) with {KD}-trees.
\newblock https://github.com/jlblancoc/nanoflann, 2014.

\bibitem{coifmanlafon2006}
Ronald~R Coifman and St{\'e}phane Lafon.
\newblock Diffusion maps.
\newblock {\em Applied and computational harmonic analysis}, 21(1):5--30, 2006.

\bibitem{CoifmanEtAl08}
Ronald~R Coifman, Yoel Shkolnisky, Fred~J. Sigworth, and Amit Singer.
\newblock Graph {L}aplacian tomography from unknown random projections.
\newblock {\em IEEE Trans. Image Process.}, 17(10):1891--1899, 2008.

\bibitem{cormenetal2009}
Thomas~H Cormen, Charles~E Leiserson, Ronald~L Rivest, and Clifford Stein.
\newblock {\em Introduction to algorithms}.
\newblock MIT press, 2009.

\bibitem{couranthilbert2008}
Richard Courant and David Hilbert.
\newblock {\em Methods of mathematical physics: partial differential
  equations}.
\newblock John Wiley \& Sons, 2008.

\bibitem{rosenblatt1956}
Richard~A Davis, Keh-Shin Lii, and Dimitris~N Politis.
\newblock Remarks on some nonparametric estimates of a density function.
\newblock In {\em Selected Works of Murray Rosenblatt}, pages 95--100.
  Springer, 2011.

\bibitem{friedmanetal1977}
Jerome~H Friedman, Jon~Louis Bentley, and Raphael~Ari Finkel.
\newblock An algorithm for finding best matches in logarithmic expected time.
\newblock {\em ACM Transactions on Mathematical Software (TOMS)},
  3(3):209--226, 1977.

\bibitem{giannakis2019}
Dimitrios Giannakis.
\newblock Data-driven spectral decomposition and forecasting of ergodic
  dynamical systems.
\newblock {\em Applied and Computational Harmonic Analysis}, 47(2):338--396,
  2019.

\bibitem{HeinEtAl07}
Matthias Hein, Jean-Yves Audibert, and Ulrike von Luxburg.
\newblock Graph {L}apliacians and their convergence on random neighborhood
  graphs.
\newblock {\em Journal of Machine Learning Research}, 8:1325--1368, 2007.

\bibitem{JiangHarlim20}
Shixiao~W Jiang and John Harlim.
\newblock Ghost point diffusion maps for solving elliptic {PDE}'s on manifolds
  with classical boundary conditions.
\newblock {\em arXiv preprint arXiv:2006.04002}, 2020.

\bibitem{nlopt}
Steven~G Johnson.
\newblock The {NL}opt nonlinear-optimization package.
\newblock http://github.com/stevengj/nlopt, 2014.

\bibitem{jonesetal2011}
Peter~Wilcox Jones, Andrei Osipov, and Vladimir Rokhlin.
\newblock Randomized approximate nearest neighbors algorithm.
\newblock {\em Proceedings of the National Academy of Sciences},
  108(38):15679--15686, 2011.

\bibitem{kimpark2013}
Yoon~Tae Kim and Hyun~Suk Park.
\newblock Geometric structures arising from kernel density estimation on
  riemannian manifolds.
\newblock {\em Journal of Multivariate Analysis}, 114:112--126, 2013.

\bibitem{ngetal2002}
Andrew~Y Ng, Michael~I Jordan, and Yair Weiss.
\newblock On spectral clustering: Analysis and an algorithm.
\newblock 2:849--856, 2002.

\bibitem{ozakingray2009}
Arkadas Ozakin and Alexander Gray.
\newblock Submanifold density estimation.
\newblock {\em Advances in Neural Information Processing Systems},
  22:1375--1382, 2009.

\bibitem{parzen1962}
Emanuel Parzen.
\newblock On estimation of a probability density function and mode.
\newblock {\em The annals of mathematical statistics}, 33(3):1065--1076, 1962.

\bibitem{pelletier2005}
Bruno Pelletier.
\newblock Kernel density estimation on riemannian manifolds.
\newblock {\em Statistics \& probability letters}, 73(3):297--304, 2005.

\bibitem{spectra}
Yixuan Qiu.
\newblock Spectra: {S}parse {E}igenvalue {C}omputation {T}oolkit as a
  {R}edesigned {ARPACK}.
\newblock https://spectralib.org/, 2017.

\bibitem{sainscott1996}
Stephan~R Sain and David~W Scott.
\newblock On locally adaptive density estimation.
\newblock {\em Journal of the American Statistical Association},
  91(436):1525--1534, 1996.

\bibitem{shimalik2000}
Jianbo Shi and Jitendra Malik.
\newblock Normalized cuts and image segmentation.
\newblock {\em IEEE Transactions on pattern analysis and machine intelligence},
  22(8):888--905, 2000.

\bibitem{singer2006}
Amit Singer.
\newblock From graph to manifold {L}aplacian: The convergence rate.
\newblock {\em Applied and Computational Harmonic Analysis}, 21(1):128--134,
  2006.

\bibitem{sproull1991}
Robert~F Sproull.
\newblock Refinements to nearest-neighbor searching in k-dimensional trees.
\newblock {\em Algorithmica}, 6(1):579--589, 1991.

\bibitem{terrellscott1992}
George~R Terrell and David~W Scott.
\newblock Variable kernel density estimation.
\newblock {\em The Annals of Statistics}, pages 1236--1265, 1992.

\bibitem{tothetal2017}
Csaba~D Toth, Joseph O'Rourke, and Jacob~E Goodman.
\newblock {\em Handbook of discrete and computational geometry}.
\newblock CRC press, 2017.

\bibitem{TrillosEtAl20}
Nicol\'{a}s~G Trillos, Moritz Gerlach, Matthias Hein, and Dejan Slep{\v{c}}ev.
\newblock Error estimates for spectral convergence of the graph {L}aplacian on
  random geometric graphs towards the {L}aplace--{B}eltrami operator.
\newblock {\em Foundations of Computational Mathematics}, 20:827--887, 2020.

\bibitem{vaughnetal2019}
Ryan Vaughn, Tyrus Berry, and Harbir Antil.
\newblock Diffusion maps for embedded manifolds with boundary with applications
  to {pde}s.
\newblock {\em arXiv preprint arXiv:1912.01391}, 2019.

\end{thebibliography}

\end{document}